\newtheorem{definition}{Definition}[section]
  \newtheorem{theorem}[definition]{Theorem}
  \newtheorem{lemma}[definition]{Lemma}
  \newtheorem{corollary}[definition]{Corollary}
  \newtheorem{proposition}[definition]{Proposition}
  \newtheorem{remark}[definition]{Remark}
  \newtheorem{example}[definition]{Example}
    \newtheorem{open}[definition]{Open Problem}
    \newtheorem{property}[definition]{Property}
\title{On posetal and complete\\ partial applicative structures}
\author{Samuele Maschio}
\date{}
\begin{document}
\maketitle
\begin{abstract}
Every partial applicative structure gives rise to an indexed binary relation, that is a contravariant functor from the category of sets to the category of sets endowed with binary relations and maps preserving them. In this paper we characterize those partial applicative structures giving rise to indexed relations satisfying certain elementary properties in terms of algebraic or computational properties. We will then provide a characterization of those partial applicative structures giving rise to indexed preorders and indexed posets, and we will relate the latter ones to some particular classes of unary partial endofunctions. We will analyze the relation between a series of computational and algebraic properties in the posetal case. Finally, we will study the problem of existence of suprema in the case of partial applicative structures giving rise to indexed preorders, by providing some necessary conditions for a partial applicative structure to be complete.
\end{abstract}
\section{Introduction}
Partial applicative structures (PAS for short) are very elementary mathematical entities. They just consist of a set $\mathsf{R}$ on which is defined a partial binary operation $\cdot:\mathsf{R}\times \mathsf{R}\rightharpoonup \mathsf{R}$. Such a structure can be seen at least from three different points of view. First, it can be seen simply as an algebraic structure which is a generalization of the algebraic structure called magma (in fact it is its ``partial'' version). We will call this point of view \emph{algebraic}. However, such a structure can also be thought of as carrying an abstract notion of computability; indeed, every element $r$ of $\mathsf{R}$ represents a partial unary function from $\mathsf{R}$ to itself which can be thought as ``computable'' according to $\mathcal{R}$. We will call this point of view \emph{computational}.
Third, one can consider the elements of $\mathsf{R}$ as ``realizers'' and subsets of $\mathsf{R}$ can be seen as ``propositions'' of which the elements are their ``realizers'' (or, if you prefer, ``constructive proofs''). This point of view gets richer when one allows propositions to depend on arbitrary elements of a set, that is if one considers the sets $\mathcal{P}(\mathsf{R})^I$ where $I$ is a set, and when an adequate notion of ``entailment'' between ``propositions'' is introduced. We could call this point of view \emph{logical}. 

Particular kinds of partial applicative structures are studied in different areas of mathematics (and this is not surprising given their extreme generality) according to at least one of the points of view just illustrated: monoids and groups are examples of partial applicative structures (\emph{algebraic} point of view), partial combinatory algebras (PCA) (see \cite{VOO08}) are studied as models of computation (\emph{computational} point of view) or as tools to produce indexed Heyting prealgebras (\emph{logical} point of view) called \emph{triposes} (see \cite{HJP}) which are used to produce elementary toposes (and in particular those toposes which are known under the name of \emph{realizability toposes}, like Hyland's Effective Topos in \cite{HYL}).  

In this paper, moved by mathematical taxonomy motivations, we aim to explore a possibility which is precluded in the classes of partial applicative structures mentioned above (because it simply leads to triviality in those cases), that is the possibility for a PAS to give rise to an indexed poset of ``dependent propositions''. This request is formulated in the framework of the logical point of view on PASs, but we will explore its connections with algebraic and computational properties. One could argue that it would be sufficient to characterize PASs giving rise to indexed preorders of ``dependent propositions'', since a posetal reflection can easily be performed subsequently. Of course, this is true, but it is not our goal: we want to understand when this happens directly. In Section 2 we will briefly introduce a list of properties which are computationally or algebraically relevant and recall some simple mutual relations between them. In Section 3 we will define precisely the indexed binary relation associated to a PAS, while in Section 4 we will provide characterizations for this indexed binary relation to satisfy a list of elementary properties which are often required to binary relations; the section will end with a characterization of those PASs which give rise to an indexed preorder, which we will call \emph{preorderal}, providing a common generalization both of the notion of monoid and of PCA. In Section 5 we will consider antisymmetry of the indexed binary relation induced by a PAS in relation with some particular classes of partial unary functions, under no hypotheses, under transitivity hypothesis and under totality hypothesis. We will conclude the section by providing a characterization of those PASs giving rise to indexed posets (we will call these PASs \emph{posetal}) in terms of elementary algebraic and computational properties. In Section 6 we will discuss the relation between the elementary properties introduced in the previous sections in the context of posetal PASs. In Section 7 we will study posetal PASs generated by a particular kind of partial functions. In Section 8 we will study preorderal PASs in relation with suprema and we will provide necessary conditions for a preorderal PAS to give rise to a complete indexed preorder. 

We will informally work in the classical set theory $\mathbf{ZFC}$.

\section{Partial applicative structures}
A \emph{partial applicative structure} (for short PAS) is a pair $\mathcal{R}=(\mathsf{R}, \cdot)$ where $\mathsf{R}$ is a non-empty set and $\cdot:\mathsf{R}\times \mathsf{R}\rightharpoonup \mathsf{R}$ is a partial function (we write $a\cdot b$ instead of $\cdot(a,b)$). An \emph{applicative closed term} is a string obtained as follows: every $a\in \mathsf{R}$ is an applicative term and if $t,s$ are applicative terms, then $(t\cdot s)$ is an applicative term. If $t$ is an applicative term we write $t\downarrow$ if the value of $t$ can be computed. If $t$ and $s$ are applicative terms, we write $t=s$ to mean that $t\downarrow$, $s\downarrow$ and the results of the two computations coincide. We write $t\simeq s$ to mean that $t\downarrow$ if and only if $s\downarrow$ and in that case the results of the two computations coincide. We will adopt the convention of association to the left for the operation $\cdot$, that is $a\cdot b\cdot c$ will mean $(a\cdot b)\cdot c$. We will usually remove external brackets from terms.

We will say that a PAS $\mathcal{R}$ is \emph{finite} if $\left|\mathsf{R}\right|$ is finite, and we will say that $\mathcal{R}$ is \emph{total} if the domain of $\cdot$ is $\mathsf{R}\times \mathsf{R}$. A \emph{trivial} PAS is a PAS $\mathcal{R}$ with $\left|\mathsf{R}\right|=1$ and $a\cdot a=a$ for the unique element $a$ of $\mathsf{R}$.
We will denote with $\mathbf{Finite}$, $\mathbf{Total}$ and $\mathbf{Trivial}$, respectively, the properties we have just introduced.

\subsection{Partial endofunctions represented by a PAS}
To every partial applicative structure $\mathcal{R}$ can be associated a function 
$$[-]:\mathsf{R}\rightarrow \mathsf{Part}(\mathsf{R},\mathsf{R})$$ where $\mathsf{Part}(\mathsf{R},\mathsf{R})$ is the set of partial functions from $\mathsf{R}$ to itself. Indeed, to each $r\in \mathsf{R}$ one can associate the partial function $[r]$ whose domain is the set of those $a\in \mathsf{R}$ for which $r\cdot a\downarrow$ and defined as $[r](a):=r\cdot a$ for every such $a$. We say that a partial function $f:\mathsf{R}\rightharpoonup\mathsf{R}$ is \emph{representable} in $\mathcal{R}$ if and only if there exists $r\in \mathsf{R}$ such that $[r]=f$ (and in this case we say that $r$ \emph{represents} $f$).
Notice that trivially a PAS $\mathcal{R}$ is total if and only if $[r]$ is total for every $r\in \mathsf{R}$.

We present here a list of properties that one can require to a partial applicative structure $\mathcal{R}$ in order to represent some particular partial functions.

\begin{property}[{\bf 1-Total}] There exists $r\in \mathsf{R}$ such that $[r]$ is total, i.e.\ there is at least one representable total function.
\end{property}


\begin{property}[{\bf Const}] All constants are representable in $\mathcal{R}$, that is, for every $a\in \mathsf{R}$ there exists $c\in \mathsf{R}$ such that $c\cdot b=a$ for every $b\in \mathsf{R}$.
\end{property}
\begin{property}[{\bf Id}] There exists $\mathbf{i}\in \mathsf{R}$ such that $[\mathbf{i}]=\mathsf{id}_{\mathsf{R}}$, that is $\mathbf{i}\cdot a=a$ for every $a\in \mathsf{R}$.
\end{property}

\begin{property}[{\bf TM}]
For every $a,b\in \mathsf{R}$ there exists $r\in \mathsf{R}$ such that $r\cdot a=b$.\footnote{{\bf TM} stands for ``totally matching''.}
\end{property}

\begin{property}[{\bf 1W}]
For every $r,s,a,b\in \mathsf{R}$, if $r\cdot a=b$ and $s\cdot b=a$, then $a=b$.\footnote{{\bf 1W} stands for ``one way''.}
\end{property}

\begin{property}[{\bf Ext}] For every $r,s\in \mathsf{R}$, if $[r]=[s]$, then $r=s$.
\end{property}

\begin{property}[{\bf Comp}]
For every $r,s\in \mathsf{R}$ there exists $t\in \mathsf{R}$ such that $[s]\circ [r]= [t]$, that is $s\cdot(r\cdot a)\simeq t\cdot a$ for every $a\in \mathsf{R}$.
\end{property}

\begin{proposition}\label{basic}Let $\mathcal{R}$ be a PAS. Then, 
\begin{enumerate}
\item If $\mathcal{R}$ is trivial, then $\mathbf{Id}$, $\mathbf{Const}$, $\mathbf{Comp}$, $\mathbf{Ext}$ and $\mathbf{1W}$ hold;
\item $\mathbf{Const}\Rightarrow \mathbf{TM}$;
\item $\mathbf{Const}\vee \mathbf{Id}\vee \mathbf{Total}\Rightarrow \mathbf{1}\textrm{-}\mathbf{Total}$;
\item $\mathbf{Id}\wedge\mathbf{Comp}$ if and only if $\{[r]|\,r\in \mathsf{R}\}$ is a submonoid of the monoid of partial functions from $\mathsf{R}$ to $\mathsf{R}$;
\item $\mathbf{1W}\wedge \mathbf{TM}\Rightarrow \mathbf{Trivial}$.
\end{enumerate}
\end{proposition}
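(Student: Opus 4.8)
The plan is to treat the five items essentially independently, since each follows by unwinding the definitions, with item (5) the only one requiring a genuine (if short) argument.

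For item (1) I would fix the unique element $a$ of the trivial PAS, for which $a\cdot a=a$. Then $[a]=\mathsf{id}_{\mathsf{R}}$ gives $\mathbf{Id}$ (witnessed by $\mathbf{i}=a$); the same $a$ serves as the constant for $\mathbf{Const}$ and as the composite $t$ for $\mathbf{Comp}$, since every applicative term built from $a$ evaluates to $a$. Properties $\mathbf{Ext}$ and $\mathbf{1W}$ hold because $\mathsf{R}$ is a singleton, so any two named elements are forced to coincide (in $\mathbf{Ext}$, $r=s$; in $\mathbf{1W}$, $a=b$). Item (2) is immediate: given $a,b$, apply $\mathbf{Const}$ to $b$ to obtain $c$ with $c\cdot x=b$ for all $x$, and read off $c\cdot a=b$. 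For item (3) I would handle the three disjuncts in turn: under $\mathbf{Const}$ any constant function is total; under $\mathbf{Id}$ the function $[\mathbf{i}]=\mathsf{id}_{\mathsf{R}}$ is total; and under $\mathbf{Total}$ every $[r]$ is total by the remark preceding the property list. In each case non-emptiness of $\mathsf{R}$ supplies a witness, giving $\mathbf{1}\textrm{-}\mathbf{Total}$.

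For item (4) I would recall that the partial functions on $\mathsf{R}$ form a monoid under composition with unit $\mathsf{id}_{\mathsf{R}}$, and that the monoid operation matches the convention in $\mathbf{Comp}$, namely $([s]\circ[r])(a)=s\cdot(r\cdot a)$. The forward direction then reads off that $\mathbf{Id}$ places $\mathsf{id}_{\mathsf{R}}$ in the image $\{[r]\mid r\in\mathsf{R}\}$ while $\mathbf{Comp}$ gives closure under composition; conversely, a submonoid must contain the unit and be closed under the operation, which are exactly $\mathbf{Id}$ and $\mathbf{Comp}$ respectively.

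Item (5) is where the only real work lies. Assuming $\mathbf{1W}$ and $\mathbf{TM}$, I would first show that $\mathsf{R}$ is a singleton: given arbitrary $a,b\in\mathsf{R}$, apply $\mathbf{TM}$ twice to obtain $r$ with $r\cdot a=b$ and $s$ with $s\cdot b=a$, whereupon $\mathbf{1W}$ forces $a=b$; hence $\left|\mathsf{R}\right|=1$. Writing $a$ for the unique element, a final application of $\mathbf{TM}$ to the pair $(a,a)$ yields some $r$ with $r\cdot a=a$, and since $r=a$ is forced we get $a\cdot a=a$, so $\mathcal{R}$ is trivial. The main (mild) obstacle is spotting that $\mathbf{TM}$ should be used bidirectionally between each pair of elements, so as to feed both hypotheses of $\mathbf{1W}$ simultaneously.
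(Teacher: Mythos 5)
Your proposal is correct and follows essentially the same route as the paper: the paper dispatches items (1), (3) and (4) as immediate from the definitions (which you merely spell out in more detail), and its arguments for (2) and (5) — instantiating the constant for $b$ at $a$, and applying $\mathbf{TM}$ in both directions to feed $\mathbf{1W}$ before checking $a\cdot a=a$ — are exactly the ones you give.
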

\begin{proof}
1.\ clearly holds. 2.\ holds since if $\mathbf{Const}$ holds, then for every $a,b\in \mathsf{R}$, there exists $c\in \mathsf{R}$ such that  $c\cdot x=b$ for every $x\in \mathsf{R}$, from which in particular it follows that $c\cdot a=b$. 3.\ and 4.\ hold by definition. For 5., if $\mathbf{1W}$ and $\mathbf{TM}$ hold, then for every $a,b$ there exist $r,s\in \mathsf{R}$ with $r\cdot a=b$ and $s\cdot b=a$ by $\mathbf{TM}$, and thus $a=b$ by $\mathbf{1W}$; moreover, since $\mathbf{TM}$ holds the only element $a$ of $\mathsf{R}$ must satisfy $a\cdot a=a$.
\end{proof}

\begin{proposition}\label{basicfin} Assume $\mathcal{R}$ to be a PAS satisfying $\mathbf{Finite}$. Then:
\begin{enumerate}
\item if $\mathcal{R}$ satisfies $\mathbf{Const}$, then for every $r,a,b\in \mathsf{R}$, $r\cdot a=r\cdot b$; 
\item $\mathbf{Const}\Rightarrow \mathbf{Comp}$;
\item $\mathbf{TM}\Rightarrow\mathbf{Ext}\wedge\mathbf{Total}$;
\item $\mathbf{Const}\wedge \mathbf{Id}\Rightarrow \mathbf{Trivial}$.
\end{enumerate}
\end{proposition}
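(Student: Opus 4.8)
The plan is to run every item on a single combinatorial principle made available by $\mathbf{Finite}$: a surjection from a finite set onto itself is a bijection, and a subset of a finite set with the same cardinality is the whole set. Once this is isolated, each of the four claims is a short application. For item~1, I would introduce $C := \{c \in \mathsf{R} : [c]\text{ is a total constant function}\}$ together with the \emph{value map} $v : C \to \mathsf{R}$ sending $c$ to the common value of $[c]$. Property $\mathbf{Const}$ says precisely that $v$ is surjective: for each $a \in \mathsf{R}$ there is $c$ with $c \cdot b = a$ for all $b$, so $c \in C$ and $v(c) = a$. Since $C \subseteq \mathsf{R}$ and $\mathsf{R}$ is finite, surjectivity of $v$ forces $|C| = |\mathsf{R}|$, hence $C = \mathsf{R}$. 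Thus every $[r]$ is a total constant function, which is exactly the assertion $r \cdot a = r \cdot b$ for all $r,a,b$.

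Item~2 then follows immediately from item~1: if every $[r]$ and $[s]$ is a total constant function, then for all $a$ we have $([s]\circ[r])(a) = [s]([r](a)) = [s](a)$, so $[s]\circ[r] = [s]$, and taking $t := s$ witnesses $\mathbf{Comp}$. For item~3, I would fix an arbitrary $a \in \mathsf{R}$ (which exists since $\mathsf{R} \neq \emptyset$) and consider the partial evaluation $\phi_a : \mathsf{R} \rightharpoonup \mathsf{R}$ defined by $\phi_a(r) := r \cdot a$, with domain $D_a = \{r : r \cdot a \downarrow\}$. Property $\mathbf{TM}$ says exactly that $\phi_a$ is surjective, so its total restriction $D_a \to \mathsf{R}$ is a surjection from the subset $D_a$ of $\mathsf{R}$ onto all of $\mathsf{R}$; finiteness forces $D_a = \mathsf{R}$, and a surjection between equal finite sets is a bijection, so $\phi_a$ is moreover injective. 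As $a$ was arbitrary, every product $r \cdot a$ is defined, giving $\mathbf{Total}$; and injectivity of each $\phi_a$ gives $\mathbf{Ext}$, since $[r] = [s]$ yields $r \cdot a = s \cdot a$ for the chosen $a$, whence $r = s$.

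Finally, item~4 combines item~1 with $\mathbf{Id}$: by item~1 the function $[\mathbf{i}]$ is constant, while $\mathbf{Id}$ makes it the identity, and the identity on $\mathsf{R}$ is constant only when $|\mathsf{R}| = 1$. Writing $\mathsf{R} = \{a\}$, the element $\mathbf{i}$ must equal $a$, so $a \cdot a = \mathbf{i} \cdot a = a$, which is $\mathbf{Trivial}$.

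All steps are brief once the counting principle is extracted; the one point deserving care is the handling of partiality. Specifically, I would verify that $\mathbf{Const}$ really produces \emph{total} constant functions, so that the value map $v$ is genuinely surjective onto all of $\mathsf{R}$, and that in item~3 the surjectivity of $\phi_a$ forces its domain to be all of $\mathsf{R}$ rather than merely a large subset. This is exactly where $\mathbf{Finite}$ does the essential work, and it is the place where I would be most careful to state the cardinality comparisons correctly.
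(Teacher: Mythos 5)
Your proof is correct and follows essentially the same counting/pigeonhole strategy as the paper's: item 1 via a cardinality comparison between the set of constant-representing elements and $\mathsf{R}$ (the paper uses the inverse map $a\mapsto k_a$ and injectivity, you use the value map and surjectivity), item 2 by composing constants, and item 4 by combining item 1 with $\mathbf{Id}$. The only cosmetic difference is in item 3, where you apply the pigeonhole argument columnwise to each evaluation map $r\mapsto r\cdot a$, while the paper counts all $n^2$ pairs distributed among the $n$ graphs $[r]$ at once; both yield $\mathbf{Total}$ and $\mathbf{Ext}$ equally directly.
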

\begin{proof} Let $\mathcal{R}$ be a PAS satisfying $\mathbf{Finite}$ with $\left|\mathsf{R}\right|=n$.
\begin{enumerate}
\item If $\mathcal{R}$ satisfies $\mathbf{Const}$, then one can define an injective function $a\mapsto k_a$ from $\mathsf{R}$ to itself sending each $a\in \mathsf{R}$ to an element $k_a\in \mathsf{R}$ representing the constant function with value $a$. This function is bijective since $\mathsf{R}$ is finite. Thus, every $r\in \mathsf{R}$ represents a constant function.
\item This follows from the proof of 1.\ and the fact that the composition of two constant functions is a constant function.
\item If $\mathcal{R}$ satisfies $\mathbf{TM}$, then each pair $(a,b)\in \mathsf{R}\times \mathsf{R}$ must be element of $[r]$ for some $r\in \mathsf{R}$. Since such pairs are $n^2$, it follows that $\left|\bigcup_{r\in \mathsf{R}} [r]\right|\geq n^2$. Since $\left|[r]\right|\leq n$ for every $r\in \mathsf{R}$, it follows that $\left|[r]\right|= n$ for every $r\in \mathsf{R}$ and that $[r]\cap [s]=\emptyset$ for every $r,s\in \mathsf{R}$ with $r\neq s$. Thus, $[r]$ is total for every $r\in \mathsf{R}$ and if $[r]=[s]$, then $r=s$.
\item Assume $\mathcal{R}$ to satisfy $\mathbf{Id}$ and $\mathbf{Const}$ and let $\mathbf{i}$ be an element of $\mathsf{R}$ such that $[\mathbf{i}]=\mathsf{id}_{\mathsf{R}}$; then from 1.\ it follows that $a=\mathbf{i}\cdot a=\mathbf{i}\cdot b=b$ for every $a,b\in \mathsf{R}$; moreover, $\mathbf{i}\cdot \mathbf{i}=\mathbf{i}$. Thus $\mathcal{R}$ is trivial.
\end{enumerate}
\end{proof}

\subsection{Standard algebraic properties}
The following properties are often required to algebraic structures. 
\begin{property}[$\mathbf{ID}_{R}$] There exists $\mathbf{j}$ such that $a\cdot \mathbf{j}=a$ for every $a\in  \mathsf{R}$.
\end{property}

We do not need to add a new property requiring the existence of a left identity for $\cdot$, since $\mathbf{Id}$ already states such a property.

Associativity and abelianity are formulated in the partial context as follows:

\begin{property}[{\bf Assoc}] 
$a\cdot(b\cdot c)\simeq (a\cdot b)\cdot c$ for every $a,b,c\in \mathsf{R}$.
\end{property}

\begin{property}[{\bf Ab}]
$a\cdot b\simeq b\cdot a$ for every $a,b\in \mathsf{R}$.
\end{property}
Obviously we have that $\mathbf{Id}\wedge \mathbf{Ab}\Leftrightarrow \mathbf{ID}_{R}\wedge \mathbf{Ab}$. 
In the following proposition we illustrate some interactions between some properties in the previous section and those in this one.
\begin{proposition}\label{interact} Let $\mathcal{R}$ be a PAS. Then:
\begin{enumerate}
\item $\mathbf{ID}_{R}\Rightarrow \mathbf{Ext}$;
\item If $\mathbf{Total}\wedge \mathbf{Assoc}$, then $[s\cdot r]=[s]\circ [r]$ for every $r,s\in \mathsf{R}$ and, in particular, $\mathbf{Comp}$ holds.
\end{enumerate}
\end{proposition}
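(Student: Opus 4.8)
The plan is to prove the two parts of Proposition~\ref{interact} separately, both by direct unwinding of the definitions.

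For part~1, I want to show $\mathbf{ID}_R \Rightarrow \mathbf{Ext}$. So I assume there exists $\mathbf{j}$ with $a\cdot\mathbf{j}=a$ for every $a\in\mathsf{R}$, and I take $r,s\in\mathsf{R}$ with $[r]=[s]$. The key observation is that $[r]=[s]$ means $r\cdot x \simeq s\cdot x$ for every $x\in\mathsf{R}$; in particular this holds for $x=\mathbf{j}$. Since $\mathbf{ID}_R$ guarantees $r\cdot\mathbf{j}=r$ and $s\cdot\mathbf{j}=s$ (both are defined and equal to $r$, resp.\ $s$), instantiating the equality of the represented functions at the point $\mathbf{j}$ yields $r = r\cdot\mathbf{j} = s\cdot\mathbf{j} = s$. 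This is the whole argument; the only subtlety to state carefully is that $[r]=[s]$ as partial functions forces agreement (including domain) at $\mathbf{j}$, but since $r\cdot\mathbf{j}$ is in fact defined by $\mathbf{ID}_R$, there is no definedness issue.

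For part~2, I assume $\mathbf{Total}\wedge\mathbf{Assoc}$ and fix $r,s\in\mathsf{R}$; the goal is $[s\cdot r]=[s]\circ[r]$. Here totality is doing real work: it guarantees that $s\cdot r$ is defined (so $[s\cdot r]$ makes sense) and that all the products appearing below are defined, so that the partial-equality $\simeq$ from $\mathbf{Assoc}$ collapses to genuine equality. For an arbitrary $a\in\mathsf{R}$, I compute $[s\cdot r](a) = (s\cdot r)\cdot a$, and by $\mathbf{Assoc}$ this equals $s\cdot(r\cdot a)$, which is precisely $[s]([r](a)) = ([s]\circ[r])(a)$. Since totality makes every term here defined, the functions agree on all of $\mathsf{R}$, giving $[s\cdot r]=[s]\circ[r]$. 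The claim that $\mathbf{Comp}$ holds is then immediate: given $r,s$, the element $t:=s\cdot r$ witnesses $[t]=[s]\circ[r]$, which is exactly what $\mathbf{Comp}$ requires.

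Neither part presents a genuine obstacle; the whole content is bookkeeping about definedness. The one point that deserves the most care is the role of $\mathbf{Total}$ in part~2: without it, $\mathbf{Assoc}$ only gives $(s\cdot r)\cdot a \simeq s\cdot(r\cdot a)$ with possible undefinedness, and one would at best recover the weaker statement $[s\cdot r]\subseteq [s]\circ[r]$ (or a Kleene-equality variant) rather than the asserted equality of total functions. So I would make explicit at the start of part~2 that totality is invoked precisely to turn every $\simeq$ into $=$ and to ensure $s\cdot r\downarrow$.
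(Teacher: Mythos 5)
Your proof is correct and follows essentially the same route as the paper: part 1 evaluates the identity of represented functions at the right identity $\mathbf{j}$, and part 2 uses totality to make $s\cdot r$ and all relevant products defined so that $\mathbf{Assoc}$ gives $(s\cdot r)\cdot a = s\cdot(r\cdot a)$, yielding $[s\cdot r]=[s]\circ[r]$ and hence $\mathbf{Comp}$ with witness $t:=s\cdot r$. Your added remarks on definedness bookkeeping are accurate but do not change the argument.
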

\begin{proof} Let $\mathcal{R}$ be a PAS. Assume $\mathbf{ID}_{R}$ to hold and $r\cdot a\simeq s\cdot a$ for every $a\in \mathsf{R}$; then in particular $r=r\cdot \mathbf{j}=s\cdot \mathbf{j}=s$. Thus we have proved 1. 

If a PAS $\mathcal{R}$ safisfies $\mathbf{Total}\wedge \mathbf{Assoc}$, then $s\cdot r\downarrow $ for every $r,s\in \mathsf{R}$ and, since $(s\cdot r)\cdot a= s\cdot (r\cdot a)$ for every $a\in \mathsf{R}$, $[s\cdot r]=[s]\circ [r]$. In particular, $\mathbf{Comp}$ holds. This proves 2.
\end{proof}

We recall that $\mathcal{R}$ is a monoid if and only if it satisfies $\mathbf{Total}$, $\mathbf{Assoc}$, $\mathbf{ID}_R$ and $\mathbf{Id}$.
Obviously, every trivial PAS is an abelian monoid, that is a monoid satisfying $\mathbf{Ab}$.

\subsection{Combinators $\mathbf{k}$ and $\mathbf{s}$.}
We consider now two properties concerning the existence of certain combinators.
\begin{property}[{\bf K}] There exists $\mathbf{k}\in \mathsf{R}$ such that $\mathbf{k}\cdot a\cdot b=a$ for every $a,b\in \mathsf{R}$.
\end{property}
\begin{proposition}\label{Konst} Let $\mathcal{R}$ be a PAS. Then $\mathbf{K}\Rightarrow \mathbf{Const}$. 
\end{proposition}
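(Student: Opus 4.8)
The plan is to exhibit, for each $a\in\mathsf{R}$, an explicit realizer of the constant function with value $a$, built directly from the combinator $\mathbf{k}$. The natural candidate is $c:=\mathbf{k}\cdot a$: since property $\mathbf{K}$ gives us an element $\mathbf{k}$ with $\mathbf{k}\cdot a\cdot b=a$ for all $a,b$, and recalling the left-association convention $\mathbf{k}\cdot a\cdot b=(\mathbf{k}\cdot a)\cdot b$, this element $c$ should satisfy $c\cdot b=a$ for every $b$, which is exactly what $\mathbf{Const}$ requires.

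First I would fix an arbitrary $a\in\mathsf{R}$ and invoke $\mathbf{K}$ to obtain $\mathbf{k}$. The one point that needs genuine care — and the only step that is not purely symbolic — is the definedness of the intermediate term $\mathbf{k}\cdot a$. Because $\cdot$ is only partial, I cannot simply write down $c:=\mathbf{k}\cdot a$ without first checking $\mathbf{k}\cdot a\downarrow$. I would argue this as follows: since $\mathsf{R}$ is non-empty by the definition of a PAS, pick any $b_0\in\mathsf{R}$; the equation $(\mathbf{k}\cdot a)\cdot b_0=a$ asserts (by the convention for $=$ between applicative terms) that the left-hand side is defined, and the definition of application forces the subterm $\mathbf{k}\cdot a$ to be defined as well. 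Hence $c:=\mathbf{k}\cdot a$ is a legitimate element of $\mathsf{R}$.

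With $c$ in hand, the conclusion is immediate: for every $b\in\mathsf{R}$ we have $c\cdot b=(\mathbf{k}\cdot a)\cdot b=\mathbf{k}\cdot a\cdot b=a$, using $\mathbf{K}$ for the last equality. Thus $c$ represents the constant function with value $a$, and since $a$ was arbitrary, $\mathbf{Const}$ holds.

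I expect no real obstacle here; the statement is essentially unwinding the definitions. The only thing to be vigilant about is not to treat $\cdot$ as total — every time I name the element $\mathbf{k}\cdot a$ I must make sure its definedness has already been secured by the $\mathbf{K}$ equation rather than assumed. Once that bookkeeping is done, the implication follows in a single line.
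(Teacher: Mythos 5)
Your proof is correct and is essentially the paper's own argument: the paper simply observes that $\mathbf{k}\cdot a$ represents the constant function with value $a$. Your extra care about the definedness of the subterm $\mathbf{k}\cdot a$ (forced by the convention that $\mathbf{k}\cdot a\cdot b=a$ asserts definedness of the whole term and hence of its subterms) is sound and merely makes explicit what the paper leaves implicit.
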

\begin{proof}
If $\mathcal{R}$ satisfies $\mathbf{K}$ and $a\in \mathsf{R}$, then $\mathbf{k}\cdot a$ represents the constant function with value $a$.
\end{proof}
\begin{proposition}\label{Ktriv}
Let $\mathcal{R}$ be a PAS. Then
$$\mathbf{Trivial}\Leftrightarrow \mathbf{K}\wedge \mathbf{Finite}\Leftrightarrow \mathbf{K}\wedge \mathbf{ID}_{R}\Leftrightarrow \mathbf{K}\wedge \mathbf{Assoc}\Leftrightarrow \mathbf{K}\wedge \mathbf{Ab}$$
\end{proposition}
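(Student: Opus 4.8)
The plan is to exploit that all four conjunctions are asserted equivalent to $\mathbf{Trivial}$, so it suffices to prove $\mathbf{Trivial}\Rightarrow(\text{each conjunction})$ together with $(\text{each conjunction})\Rightarrow\mathbf{Trivial}$. The forward implications are immediate: in a trivial PAS with unique element $e$ and $e\cdot e=e$ one checks at once that $e$ witnesses $\mathbf{K}$ (since $e\cdot e\cdot e=e$) and $\mathbf{ID}_R$, that $\mathbf{Finite}$ holds trivially, and that $\mathbf{Assoc}$ and $\mathbf{Ab}$ hold because there is only one element; hence $\mathbf{Trivial}$ entails every one of the four conjunctions. All the content therefore lies in the four reverse implications, whose common engine is Proposition \ref{Konst}: since $\mathbf{K}$ holds, $\mathbf{k}\cdot a$ represents the constant function with value $a$, i.e.\ $(\mathbf{k}\cdot a)\cdot b=a$ for all $a,b\in\mathsf{R}$, and in particular $\mathbf{k}\cdot a\downarrow$ for every $a$. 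My strategy in each case is to use the extra hypothesis to force a collapse of $\mathsf{R}$ to a single point.

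For $\mathbf{K}\wedge\mathbf{Finite}$ I would combine Proposition \ref{Konst} with the finite analysis of Proposition \ref{basicfin}: under $\mathbf{Const}\wedge\mathbf{Finite}$ the assignment sending a value to a representative of its constant function is an injection of a finite set into itself, hence a bijection, so \emph{every} element---in particular $\mathbf{k}$---represents some constant function, say $\mathbf{k}\cdot a=c$ independently of $a$. Feeding this back into $\mathbf{K}$ gives $a=(\mathbf{k}\cdot a)\cdot b=c\cdot b$ for all $a,b$, and fixing $b$ forces $\left|\mathsf{R}\right|=1$. For $\mathbf{K}\wedge\mathbf{ID}_R$ I would evaluate $(\mathbf{k}\cdot a)\cdot\mathbf{j}$ in two ways: by $\mathbf{ID}_R$ it equals $\mathbf{k}\cdot a$, while by $\mathbf{K}$ (taking $b=\mathbf{j}$) it equals $a$; hence $\mathbf{k}\cdot a=a$. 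Substituting this into $\mathbf{K}$ yields the left-projection identity $a\cdot b=a$ for all $a,b$, and then $\mathbf{k}=\mathbf{k}\cdot a=a$ for every $a$, so again $\left|\mathsf{R}\right|=1$.

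The two remaining cases are purely term-rewriting. For $\mathbf{K}\wedge\mathbf{Ab}$ the quickest route is to compute $(\mathbf{k}\cdot a)\cdot(\mathbf{k}\cdot a')$: reading it via $\mathbf{K}$ gives the value $a$, while $\mathbf{Ab}$ identifies it with $(\mathbf{k}\cdot a')\cdot(\mathbf{k}\cdot a)=a'$, so $a=a'$ for all $a,a'$ and $\mathsf{R}$ is a singleton. For $\mathbf{K}\wedge\mathbf{Assoc}$ I would regroup the four-fold product $\mathbf{k}\cdot\mathbf{k}\cdot a\cdot b$: associated to the left it reduces, using $\mathbf{k}\cdot\mathbf{k}\cdot a=\mathbf{k}$, to $\mathbf{k}\cdot b$, whereas $\mathbf{Assoc}$ rewrites it as $(\mathbf{k}\cdot\mathbf{k})\cdot(a\cdot b)=\mathbf{k}$; comparing the two readings gives $\mathbf{k}\cdot b=\mathbf{k}$ for all $b$, and plugging $\mathbf{k}\cdot a=\mathbf{k}$ into $\mathbf{K}$ yields $a=\mathbf{k}$ for every $a$. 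In each collapsed case the identity $e\cdot e=e$ required for $\mathbf{Trivial}$ is read off directly from $\mathbf{K}$ on the single surviving element.

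I expect the main obstacle to be conceptual rather than computational: $\mathbf{K}$ by itself is very far from forcing triviality (partial combinatory algebras are highly non-trivial models of $\mathbf{K}$), so each of the four added hypotheses must be used in an essential and genuinely different way, with no single uniform collapse available. The one place demanding real care is the partiality bookkeeping: in the $\mathbf{Assoc}$ and $\mathbf{Ab}$ cases the governing axioms are stated with $\simeq$, so before equating the two readings of a term I must certify that at least one side is defined---this is supplied by Proposition \ref{Konst}, which makes every subterm of the form $\mathbf{k}\cdot a$ total---and the $\mathbf{Assoc}$ regrouping is the fiddliest, since the definedness of $a\cdot b$ itself has to be extracted from the $\simeq$ rather than assumed.
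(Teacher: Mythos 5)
Your proposal is correct and follows essentially the same strategy as the paper: the forward directions are immediate, and each reverse direction collapses $\mathsf{R}$ to a single point via Proposition \ref{Konst} and a short combinator calculation with $\mathbf{k}$, with the definedness of $a\cdot b$ in the $\mathbf{Assoc}$ case correctly extracted from $\simeq$. The only cosmetic difference is that the paper handles $\mathbf{Assoc}$ and $\mathbf{Ab}$ with one uniform computation on the term $\mathbf{k}\cdot(\mathbf{k}\cdot\mathbf{k})\cdot a\cdot\mathbf{k}$, whereas you treat them separately.
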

\begin{proof}Each trivial PAS is a finite abelian monoid which trivially satisfies $\mathbf{K}$, thus we only need to prove that a PAS $\mathcal{R}$ satisfying $\mathbf{K}$ is trivial if it satisfies also one among $\mathbf{Finite}$, $\mathbf{ID}_{R}$, $\mathbf{Assoc}$ or $\mathbf{Ab}$. Since $\mathbf{k}\cdot \mathbf{k}\downarrow$, it will be sufficient to show that all elements of $\mathcal{R}$ are equal in each of the four cases.

If $\mathcal{R}$ satisfies one of these last two properties, then for every $a\in \mathsf{R}$ we have $\mathbf{k}=\mathbf{k}\cdot \mathbf{k}\cdot \mathbf{k}=\mathbf{k}\cdot(\mathbf{k}\cdot \mathbf{k})\cdot a \cdot \mathbf{k}=(\mathbf{k}\cdot \mathbf{k})\cdot \mathbf{k}\cdot a \cdot \mathbf{k}=\mathbf{k}\cdot a\cdot \mathbf{k}=a$. 

If $\mathbf{ID}_{R}$ holds, then $a=\mathbf{k}\cdot a\cdot \mathbf{k}=\mathbf{k}\cdot \mathbf{j}\cdot a\cdot \mathbf{k}=\mathbf{j}\cdot \mathbf{k}$ for every $a\in \mathsf{R}$.

Finally, if $\mathcal{R}$ is finite, then $\mathbf{k}\cdot a=\mathbf{k}\cdot b$ for every $a,b\in\mathsf{R}$ by  Propositions \ref{Konst} and \ref{basicfin}(1). Thus $a=\mathbf{k} \cdot a \cdot \mathbf{k}=\mathbf{k} \cdot b \cdot \mathbf{k}=b$ for every $a,b\in \mathsf{R}$.
\end{proof}
\begin{property}[{\bf S}]
There exists $\mathbf{s}\in \mathsf{R}$ such that $\mathbf{s}\cdot a\cdot b\downarrow$ for every $a,b\in \mathsf{R}$ and $\mathbf{s}\cdot a\cdot b\cdot c\simeq a\cdot c\cdot (b\cdot c)$ for every  $a,b,c\in \mathsf{R}$.
\end{property}
Notice that $\mathbf{S}$ implies {\bf 1-Total} by definition, since $\mathbf{s}$ represents a total function. 

Partial combinatory algebras ($\mathbf{PCA}$) are very well-known structures arising from computability theory. They are PASs satisfying $\mathbf{K}$ and $\mathbf{S}$. The trivial PAS is clearly a PCA. 

The following fact is very well-known:

\begin{proposition}\label{PCAthm}
Let $\mathcal{R}$ be a PCA. Then $\mathbf{Id}$ and $\mathbf{Comp}$ hold.
\end{proposition}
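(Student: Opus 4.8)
The plan is to exhibit explicit realizers built from $\mathbf{k}$ and $\mathbf{s}$, exactly as in the standard theory of combinatory algebras, and then to verify the required equations by unwinding the defining clauses of $\mathbf{K}$ and $\mathbf{S}$. Throughout, the key technical point to keep in view is the definedness guarantee built into property $\mathbf{S}$, namely that $\mathbf{s}\cdot a\cdot b\downarrow$ for all $a,b\in\mathsf{R}$ (and hence $\mathbf{s}\cdot a\downarrow$ as well); this is what makes the candidate realizers genuine elements and is precisely the reason that the Kleene equalities $\simeq$ produced along the way can be upgraded to honest equalities where needed.

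For $\mathbf{Id}$, I would set $\mathbf{i}:=\mathbf{s}\cdot\mathbf{k}\cdot\mathbf{k}$. First, this element is defined because $\mathbf{S}$ forces $\mathbf{s}\cdot\mathbf{k}\cdot\mathbf{k}\downarrow$. Then for an arbitrary $a\in\mathsf{R}$ the clause of $\mathbf{S}$ gives $\mathbf{i}\cdot a=\mathbf{s}\cdot\mathbf{k}\cdot\mathbf{k}\cdot a\simeq (\mathbf{k}\cdot a)\cdot(\mathbf{k}\cdot a)$, and since $\mathbf{K}$ yields $\mathbf{k}\cdot a\downarrow$ and $\mathbf{k}\cdot a\cdot(\mathbf{k}\cdot a)=a$, one concludes $\mathbf{i}\cdot a=a$. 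As $a$ was arbitrary, $[\mathbf{i}]=\mathsf{id}_{\mathsf{R}}$, which is exactly $\mathbf{Id}$.

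For $\mathbf{Comp}$, the natural candidate is the composition combinator $\mathbf{b}:=\mathbf{s}\cdot(\mathbf{k}\cdot\mathbf{s})\cdot\mathbf{k}$; given $r,s\in\mathsf{R}$ I would then take $t:=\mathbf{b}\cdot s\cdot r$ and show $[t]=[s]\circ[r]$. The verification proceeds in stages. Using the $\mathbf{S}$-clause with $\mathbf{k}\cdot\mathbf{s}$, $\mathbf{k}$, $s$ and then $\mathbf{K}$ one computes $\mathbf{b}\cdot s\simeq \mathbf{s}\cdot(\mathbf{k}\cdot s)$, which is defined since $\mathbf{s}\cdot a\downarrow$ for all $a$; hence $t=\mathbf{b}\cdot s\cdot r\simeq \mathbf{s}\cdot(\mathbf{k}\cdot s)\cdot r$, and the definedness clause of $\mathbf{S}$ now guarantees $t\downarrow$. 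Applying $t$ to an arbitrary $a$ and unwinding $\mathbf{S}$ once more gives $t\cdot a\simeq (\mathbf{k}\cdot s\cdot a)\cdot(r\cdot a)$, and since $\mathbf{K}$ collapses $\mathbf{k}\cdot s\cdot a$ to $s$, one obtains $t\cdot a\simeq s\cdot(r\cdot a)$ for every $a$, that is $[t]=[s]\circ[r]$. This is exactly $\mathbf{Comp}$.

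The main obstacle is the bookkeeping of definedness rather than any conceptual difficulty: each time a subterm is reduced via $\mathbf{S}$ one only obtains a Kleene equality $\simeq$, so one must argue separately that the relevant head applications converge before chaining the reductions. The crucial lever is that $\mathbf{S}$ is stated so that $\mathbf{s}\cdot a\cdot b$ always converges, which is what turns $\mathbf{i}$ and $t=\mathbf{b}\cdot s\cdot r$ into genuine elements of $\mathsf{R}$ and lets the chain of $\simeq$'s propagate correctly down to the final equalities.
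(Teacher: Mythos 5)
Your proof is correct and takes essentially the same approach as the paper: both exhibit $\mathbf{s}\cdot\mathbf{k}\cdot\mathbf{k}$ as the identity realizer and build an explicit composition realizer from $\mathbf{k}$ and $\mathbf{s}$, with your $\mathbf{b}\cdot s\cdot r$ reducing to the same element $\mathbf{s}\cdot(\mathbf{k}\cdot s)\cdot r$ up to the paper's extra precomposition with the identity. Your careful tracking of definedness via the clause $\mathbf{s}\cdot a\cdot b\downarrow$ is exactly the point the paper leaves implicit.
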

\begin{proof} A left identity is provided by $\mathbf{s}\cdot \mathbf{k}\cdot \mathbf{k}$. For $\mathbf{Comp}$, if $r,s\in \mathsf{R}$, then $[t]=[s]\circ [r]$ for $t:=\mathbf{s}\cdot (\mathbf{k}\cdot s)\cdot (\mathbf{s}\cdot (\mathbf{k}\cdot r)\cdot (\mathbf{s}\cdot \mathbf{k}\cdot \mathbf{k}))$.
\end{proof}
We also have this proposition which follows directly from Propositions \ref{Konst}, \ref{basic}(2,5) and \ref{Ktriv}.
\begin{proposition}\label{PCAtriv}
Let $\mathcal{R}$ be a PCA. Then:
$$\mathbf{Trivial}\Leftrightarrow\mathbf{Ab}
\Leftrightarrow\mathbf{Assoc}
\Leftrightarrow\mathbf{ID}_{R}
\Leftrightarrow\mathbf{1W}\Leftrightarrow \mathbf{Finite}$$
\end{proposition}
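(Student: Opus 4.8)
The plan is to exploit the single fact that every PCA satisfies $\mathbf{K}$ by definition, which turns almost all of the asserted equivalences into immediate instances of Proposition \ref{Ktriv}. Indeed, that proposition establishes
$$\mathbf{Trivial}\Leftrightarrow \mathbf{K}\wedge \mathbf{Finite}\Leftrightarrow \mathbf{K}\wedge \mathbf{ID}_{R}\Leftrightarrow \mathbf{K}\wedge \mathbf{Assoc}\Leftrightarrow \mathbf{K}\wedge \mathbf{Ab},$$
and since $\mathbf{K}$ holds unconditionally in a PCA, each conjunct $\mathbf{K}\wedge X$ collapses to $X$ alone. So I would first observe that for a PCA the chain
$$\mathbf{Trivial}\Leftrightarrow \mathbf{Finite}\Leftrightarrow \mathbf{ID}_{R}\Leftrightarrow \mathbf{Assoc}\Leftrightarrow \mathbf{Ab}$$
is already settled, disposing of every equivalence in the statement except the one involving $\mathbf{1W}$.

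It then remains to insert $\mathbf{1W}$ into this chain, which I would do by proving $\mathbf{Trivial}\Leftrightarrow \mathbf{1W}$ directly. For the forward direction, if $\mathcal{R}$ is trivial then $\mathbf{1W}$ holds by Proposition \ref{basic}(1). For the reverse direction, I would assemble a short implication chain: since $\mathcal{R}$ is a PCA it satisfies $\mathbf{K}$, hence $\mathbf{Const}$ by Proposition \ref{Konst}, hence $\mathbf{TM}$ by Proposition \ref{basic}(2); combining $\mathbf{1W}$ with $\mathbf{TM}$ and invoking Proposition \ref{basic}(5) yields $\mathbf{Trivial}$. This closes the loop and places $\mathbf{1W}$ on equal footing with the other five properties.

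There is no genuine obstacle here: the entire argument is a bookkeeping exercise in citing earlier results in the right order, and the only point demanding a moment of care is the passage from Proposition \ref{Ktriv}, namely noticing that the presence of $\mathbf{K}$ in a PCA lets one strip the conjunct $\mathbf{K}$ from each equivalence. If anything, the mild subtlety is purely presentational, since Proposition \ref{Ktriv} is phrased in terms of conjunctions with $\mathbf{K}$ whereas the present statement is phrased for PCAs where $\mathbf{K}$ is ambient; I would make that identification explicit at the outset so that the application of Proposition \ref{Ktriv} reads cleanly.
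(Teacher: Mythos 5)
Your proposal is correct and follows essentially the same route as the paper, which states that the result ``follows directly from Propositions \ref{Konst}, \ref{basic}(2,5) and \ref{Ktriv}'': Proposition \ref{Ktriv} with the ambient $\mathbf{K}$ handles $\mathbf{Finite}$, $\mathbf{ID}_{R}$, $\mathbf{Assoc}$ and $\mathbf{Ab}$, while the chain $\mathbf{K}\Rightarrow\mathbf{Const}\Rightarrow\mathbf{TM}$ combined with $\mathbf{1W}\wedge\mathbf{TM}\Rightarrow\mathbf{Trivial}$ inserts $\mathbf{1W}$. Your write-up just makes the bookkeeping explicit where the paper leaves it as a one-line citation.
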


\subsection{Summarizing}
The relations we established between the properties we considered up to now can be summarized in the following diagram.
{\tiny
$$\xymatrix{
							&			&			&\mathbf{Trivial}\ar@{=>}[dlll]\ar@{=>}[ddll]\ar@{=>}[ddl]\ar@{=>}[dd]\ar@{=>}[ddr]\ar@{=>}[drr]\ar@{=>}[ddrr]	\ar@{=>}[drrr]\ar@{=>}[ddrrr]\ar@{=>}[drrrr]				&	&	&	&\\
\mathbf{K}\ar@{=>}[d]			&			&			&	&	&\mathbf{Comp}		&\mathbf{Assoc}	&\mathbf{Finite}&	&	&\\
\mathbf{Const}\ar@{=>}[d]		&\mathbf{Id}\ar@{=>}[rd]		&\mathbf{Total}	\ar@{=>}[d]	&\mathbf{S}\ar@{=>}[ld]	&\mathbf{ID}_{R}	\ar@{=>}[d]		&\mathbf{Ab}	&\mathbf{1W}	\\
\mathbf{TM}					&				&\mathbf{1}-\mathbf{Total}				&					&\mathbf{Ext}\\
}$$
}
while in the finite case the situation is the following
{\tiny
$$\xymatrix{
			&					&\mathbf{Trivial}\equiv\mathbf{K}\ar@{=>}[d]\ar@{=>}[dddll]\ar@{=>}[dddl]\ar@{=>}[ddrr]	\ar@{=>}[drr]\ar@{=>}[drrr]\ar@{=>}[drrrr]		&			&				\\		
			&					&\mathbf{Const}\ar@{=>}[d]\ar@{=>}[dr]		&							&\mathbf{1W}	&\mathbf{Assoc}	&\mathbf{Ab}	\\
			&					&\mathbf{Comp	} &\mathbf{TM}\ar@{=>}[dl]\ar@{=>}[dr]			&\mathbf{ID}_{R}\ar@{=>}[d]		&	&	\\			
\mathbf{Id}\ar@{=>}[dr]	&\mathbf{S}\ar@{=>}[d]			&\mathbf{Total}\ar@{=>}[dl]		&	&\mathbf{Ext}		&		&		&		\\	
			&\mathbf{1}-\mathbf{Total}	\\
}
$$
}

\section{Indexed relations arising from PASs}

Let us fix a PAS $\mathcal{R}=(\mathsf{R},\cdot)$. If $A$ and $B$ are subsets of $\mathsf{R}$, we can define the implication set 
$$A\Rightarrow B:=\{r\in \mathsf{R}|\,\forall a\in A\,(r\cdot a\downarrow\,\wedge\; r\cdot a\in B)\}$$

We can also define a relation $\vdash$ on $\mathcal{P}(\mathsf{R})$ as follows: 
$$A\vdash B\textnormal{ if and only if }A\Rightarrow B\neq \emptyset$$ Moreover, for every set $I$, we can define a binary relation $\vdash_{I}$ on $\mathcal{P}(\mathsf{R})^{I}$ as follows: for $\varphi, \psi: I\rightarrow \mathcal{P}(\mathsf{R})$, 
$$\varphi\vdash_{I} \psi\textrm{ if and only if }\bigcap_{i\in I}(\varphi(i)\Rightarrow \psi(i))\neq \emptyset.$$  
We write $r\Vdash (\varphi\vdash_{I}\psi)$ whenever $r\in \bigcap_{i\in I}(\varphi(i)\Rightarrow \psi(i))$. Notice that, as a direct consequence of the definition, if $\varphi\vdash_{I}\psi$, then $\varphi(i)\vdash\psi(i)$ for every $i\in I$; however the opposite does not hold in general.

Let us introduce a category $\mathbf{Bin}$ defined as follows: its objects are pairs $(A,\rho)$ in which $\rho$ is a binary relation on the set $A$, while an arrow in $\mathbf{Bin}$ from $(A,\rho)$ to $(B,\sigma)$ is a function $f:A\rightarrow B$ such that, for every $a,a'\in A$, $\rho(a,a')$ implies $\sigma(f(a),f(a'))$ (the composition is defined to be the usual set-theoretical composition of functions). 

The assignments $I\mapsto (\mathcal{P}(\mathsf{R})^{I},\vdash_{I})$ and $f\mapsto (-)\circ f$ 
define a $\mathbf{Set}$-indexed binary relation, that is a contravariant functor from $\mathbf{Set}$ to $\mathbf{Bin}$ which we denote with 
$$\pi_{\mathcal{R}}:\mathbf{Set}^{op}\rightarrow \mathbf{Bin}$$
This functor is well defined since if $f$ is a function from $J$ to $I$ and $\varphi,\psi\in \pi_{\mathcal{R}}(I)$ then 
$$\bigcap_{i\in I}(\varphi(i)\Rightarrow \psi(i))\subseteq\bigcap_{j\in J}(\varphi(f(j))\Rightarrow \psi(f(j))) $$
For every set $I$, $\pi_{\mathcal{R}}(I)=(\mathcal{P}(\mathsf{R})^{I},\vdash_{I})$ is called the \emph{fiber} of $\pi_{\mathcal{R}}$ over $I$; for every function $f$, we call $\pi_{\mathcal{R}}(f)$ the \emph{reindexing map} along $f$.

One can also notice that $(\mathcal{P}(\mathsf{R}),\vdash)$ and $(\mathcal{P}(\mathsf{R})^{\{a\}},\vdash_{\{a\}} )$ are isomorphic objects in $\mathbf{Bin}$ for every $a$, via the map sending each subset $A$ of $\mathsf{R}$ to the function $\{(a,A)\}$.

When $\mathcal{R}$ is a PCA the functor $\pi_{\mathcal{R}}$ factorizes through the subcategory of $\mathbf{Bin}$ of Heyting prealgebras (see e.g.\ \cite{VOO08}).
However, in the general case we are far from such a nice situation. In the next section we first want to show necessary and sufficient conditions for every fiber of $\pi_{\mathcal{R}}$ to satisfy certain elementary properties and then we will characterize those partial applicative structures giving rise to indexed preorders.

\section{Preorderal partial applicative structures}
\subsection{Elementary properties of indexed relations}

We start this section by recalling some elementary properties which a binary relation can satisfy. 

A binary relation $R$ on a set $A$  
\begin{enumerate}
    \item is \emph{reflexive} if $R(a,a)$ holds for every $a\in A$;
    \item is \emph{transitive} if, for every every $a,b,c\in A$, if $R(a,b)$ and $R(b,c)$ hold, then $R(a,c)$ holds;
    \item is \emph{symmetric} if, for every $a,b\in A$, if $R(a,b)$ holds, then $R(b,a)$ holds;
    \item is \emph{antisymmetric} if, for every $a,b\in A$, if $R(a,b)$ and $R(b,a)$ hold, then $a=b$;
    \item has \emph{maximum} if there exists $a\in A$ such that $R(b,a)$ holds for every $b\in A$;
    \item has \emph{minimum} if there exists $a\in A$ such that $R(a,b)$ holds for every $b\in A$;
    \item has \emph{right bounds} if for every $b\in A$ there is $a\in A$ such that $R(b,a)$ holds;
    \item has \emph{left bounds} if for every $b\in A$ there is $a\in A$ such that $R(a,b)$ holds.
\end{enumerate}
We say that an object $(A,\rho)$ of $\mathbf{Bin}$ \emph{has} one of the previous properties if the relation $\rho$ on $A$ has that property, and we say that a $\mathbf{Set}$-indexed binary relation \emph{has} one of the previous properties if all its fibers have it. If $\mathcal{R}$ is a PAS, we say that it \emph{has} one of the previous properties if $\pi_{\mathcal{R}}$ has it. 

\begin{proposition}\label{ref}
A partial applicative structure $\mathcal{R}$ is reflexive if and only if $\mathbf{Id}$ holds.
\end{proposition}
 \begin{proof}
Suppose $\mathcal{R}$ is reflexive. Let $\mathsf{sgl}:\mathsf{R}\rightarrow \mathcal{P}(\mathsf{R})$ be the singleton function sending each $a\in \mathsf{R}$ to $\{a\}$. Since $\vdash_{\mathsf{R}}$ is reflexive, then $\mathsf{sgl}\vdash_{\mathsf{R}}\mathsf{sgl}$. This means that there exists $\mathbf{i}\in \mathsf{R}$ such that $\mathbf{i}\cdot a=a$ for every $a\in \mathsf{R}$. Conversely, if there exists such an element $\mathbf{i}$, then for every set $I$ and every $\varphi:I\rightarrow \mathcal{P}(\mathsf{R})$, we have that $\mathbf{i}\in \varphi(j)\Rightarrow \varphi(j)$ for every $j \in I$, that is $\mathbf{i}\Vdash(\varphi\vdash_{I}\varphi)$.
\end{proof}

\begin{remark}\label{ext}
Notice that if $\mathcal{R}$ is reflexive, the order $\vdash$ extends the inclusion $\subseteq$ on $\mathcal{P}(\mathsf{R})$, that is $A\subseteq B$ implies $A\vdash B$. The converse implication does not hold. Indeed, one can consider the PAS $(\mathbb{N},\cdot)$ where
$$\begin{cases}
0\cdot m=m+1\textrm{ for every }m\in \mathbb{N}\\
(n+1)\cdot m=m\textrm{ for every }n,m\in \mathbb{N}\textrm{ with }n\neq m\\
(n+1)\cdot n\not\downarrow\textrm{ for every }n\in \mathbb{N}\\
\end{cases}$$
For $A\subseteq B$ we can distinguish two cases
\begin{enumerate}
\item $A=B=\mathbb{N}$, and in this case $0\in A\Rightarrow B$;
\item there exists $n\in \mathbb{N}$ such that $n\notin A$. In this case, $n+1\in A\Rightarrow B$.
\end{enumerate}
\end{remark}
\noindent In any case, $A\vdash B$. However, $\mathsf{id}_{\mathbb{N}}$ is not represented by any natural number, thus $\mathcal{R}$ is not reflexive.

 \begin{proposition}\label{tra}
A partial applicative structure $\mathcal{R}$ is transitive if and only if for every $r,s\in \mathsf{R}$, there exists $t\in \mathsf{R}$ such that for every $a\in \mathsf{R}$, if $s\cdot (r\cdot a)\downarrow$, then $t\cdot a\downarrow $ and $s\cdot (r\cdot a)=t\cdot a$, that is $[t]\supseteq [s]\circ [r]$.
\end{proposition}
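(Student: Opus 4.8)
The goal is to characterize transitivity of $\pi_{\mathcal{R}}$ by a combinator-style closure condition. The plan is to prove the two implications separately, following the same template as the proof of Proposition~\ref{ref}: for the forward direction, apply the transitivity hypothesis to a cleverly chosen fiber and a triple of cleverly chosen indexed propositions; for the backward direction, show directly that the stated condition produces, for any three $\varphi,\psi,\chi$ with $\varphi\vdash_I\psi$ and $\psi\vdash_I\chi$, a realizer of $\varphi\vdash_I\chi$.

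First I would handle the easier backward direction. Assume the arithmetic condition holds, i.e.\ for all $r,s$ there is $t$ with $[t]\supseteq[s]\circ[r]$. Fix a set $I$ and $\varphi,\psi,\chi:I\to\mathcal{P}(\mathsf{R})$ with $r\Vdash(\varphi\vdash_I\psi)$ and $s\Vdash(\psi\vdash_I\chi)$. Take the $t$ associated to this $r,s$. The claim is $t\Vdash(\varphi\vdash_I\chi)$: given $i\in I$ and $a\in\varphi(i)$, we have $r\cdot a\downarrow$ with $r\cdot a\in\psi(i)$, hence $s\cdot(r\cdot a)\downarrow$ with $s\cdot(r\cdot a)\in\chi(i)$; since $[t]\supseteq[s]\circ[r]$ and $s\cdot(r\cdot a)\downarrow$, we get $t\cdot a\downarrow$ and $t\cdot a=s\cdot(r\cdot a)\in\chi(i)$. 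This shows every fiber is transitive.

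For the forward direction I would specialize to a single well-chosen fiber that forces the condition for a given pair $r,s$. The natural choice is again $I=\mathsf{R}$ with the singleton function $\mathsf{sgl}$ as the first proposition, and then build the second and third propositions so that $r$ realizes $\mathsf{sgl}\vdash_{\mathsf{R}}\psi$ and $s$ realizes $\psi\vdash_{\mathsf{R}}\chi$ precisely when $r,s$ behave as given. Concretely I would set $\psi(a):=\{r\cdot a\}$ on the part of $\mathsf{R}$ where $r\cdot a\downarrow$ (and something harmless elsewhere), and $\chi(a):=\{s\cdot(r\cdot a)\}$ where $s\cdot(r\cdot a)\downarrow$; then $r\Vdash(\mathsf{sgl}\vdash_{\mathsf{R}}\psi)$ and $s\Vdash(\psi\vdash_{\mathsf{R}}\chi)$ hold by construction. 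Transitivity of $\vdash_{\mathsf{R}}$ then yields some $t$ with $t\Vdash(\mathsf{sgl}\vdash_{\mathsf{R}}\chi)$, which unwinds to exactly: for every $a$ with $s\cdot(r\cdot a)\downarrow$, $t\cdot a\downarrow$ and $t\cdot a=s\cdot(r\cdot a)$, i.e.\ $[t]\supseteq[s]\circ[r]$.

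The main obstacle is the bookkeeping in the forward direction when $r\cdot a$ or $s\cdot(r\cdot a)$ is undefined: the propositions $\psi,\chi$ must be defined on all of $\mathsf{R}$, and I must ensure that setting them to, say, $\emptyset$ (or to $\mathsf{R}$) on the undefined locus does not accidentally break the two realizer assumptions or weaken the conclusion. The cleanest fix is to put $\chi(a)=\emptyset$ wherever $s\cdot(r\cdot a)\not\downarrow$; then realizing $\mathsf{sgl}\vdash_{\mathsf{R}}\chi$ via $t$ imposes a (possibly vacuous but consistent) constraint only on the locus where $s\cdot(r\cdot a)\downarrow$, which is exactly the domain of $[s]\circ[r]$, giving the desired containment $[t]\supseteq[s]\circ[r]$ with no spurious requirements off that domain. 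I would verify that $\psi$ can likewise be chosen (e.g.\ $\psi(a)=\{r\cdot a\}$ where defined, $\emptyset$ otherwise) so that $r$ and $s$ genuinely realize the two entailments, and that is the only delicate point.
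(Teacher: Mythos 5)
Your backward direction is correct and coincides with the paper's. The forward direction, however, has a genuine gap, and it is located exactly at the point you flagged as ``the only delicate point'' and deferred. You keep $\mathsf{sgl}$ as the antecedent of the first entailment, so $r\Vdash(\mathsf{sgl}\vdash_{\mathsf{R}}\psi)$ requires $r\in\{a\}\Rightarrow\psi(a)$ for \emph{every} $a\in\mathsf{R}$, and by the definition of the implication set this forces $r\cdot a\downarrow$ for every $a$ --- no choice of $\psi(a)$ on the undefined locus can rescue this, since $\{a\}\Rightarrow B$ demands $r\cdot a\downarrow$ regardless of $B$. Your proposed fix of setting $\psi(a)=\emptyset$ (or $\chi(a)=\emptyset$) where the compositions diverge makes matters strictly worse: $u\in\{a\}\Rightarrow\emptyset$ is impossible for any $u$, so $r$ would fail to realize $\mathsf{sgl}\vdash_{\mathsf{R}}\psi$ at every $a$ with $r\cdot a\not\downarrow$, and $s$ would fail to realize $\psi\vdash_{\mathsf{R}}\chi$ at every $a$ with $r\cdot a\downarrow$ but $s\cdot(r\cdot a)\not\downarrow$. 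So as written your construction only establishes the condition when $[s]\circ[r]$ is total, which is not assumed.

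The repair is to shrink the \emph{antecedent}, not the consequents: the partiality must be absorbed by making the hypothesis empty (hence vacuously realized) off the domain of $[s]\circ[r]$. The paper does this by indexing over the set $\mathsf{R}_{r,s}=\{(a,b,c)\mid r\cdot a=b\wedge s\cdot b=c\}$ of composable triples and using $\mathsf{sgl}\circ\pi_1$, $\mathsf{sgl}\circ\pi_2$, $\mathsf{sgl}\circ\pi_3$ as the three propositions; equivalently, in your setup over $I=\mathsf{R}$ you could take $\varphi(a)=\{a\}$, $\psi(a)=\{r\cdot a\}$, $\chi(a)=\{s\cdot(r\cdot a)\}$ when $s\cdot(r\cdot a)\downarrow$ and all three equal to $\emptyset$ otherwise. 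With that change the two realizability claims hold by construction and the $t$ produced by transitivity satisfies exactly $[t]\supseteq[s]\circ[r]$; without it, the forward implication is not proved.
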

\begin{proof} Let us assume $\mathcal{R}$ to be transitive. Then, 
for every $r,s\in \mathsf{R}$ we can consider the set $\mathsf{R}_{r,s}:=\{(a,b,c)\in \mathsf{R}\times \mathsf{R}\times \mathsf{R}|\;r\cdot a=b\,\wedge\,s\cdot b=c\}$.
Let $\mathsf{sgl}$ be the singleton function defined in the proof of Proposition \ref{ref}.
Since $r\Vdash \left(\mathsf{sgl}\circ\pi_{1}\vdash_{\mathsf{R}_{r,s}}\mathsf{sgl}\circ\pi_{2}\right)$ and  $s\Vdash \left(\mathsf{sgl}\circ\pi_{2}\vdash_{\mathsf{R}_{r,s}}\mathsf{sgl}\circ\pi_{3}\right)$ by definition of $\mathsf{R}_{r,s}$, and $\vdash_{\mathsf{R}_{r,s}}$ is transitive, there exists $t\in \mathsf{R}$ such that $t\Vdash \left( \mathsf{sgl}\circ\pi_{1}\vdash_{\mathsf{R}_{r,s}}\mathsf{sgl}\circ\pi_{3}\right)$. In particular, $t$ satisfies the following property: for every $a\in \mathsf{R}$, if $s\cdot (r\cdot a)\downarrow$, then $t\cdot a\downarrow $ and $s\cdot (r\cdot a)=t\cdot a$.
Conversely, let us assume that $\mathcal{R}$ satisfies the condition in the statement of the proposition. If $r\Vdash (\varphi\vdash_I \psi)$ and $s\Vdash (\psi\vdash_I \rho)$, there exists $t\in\mathsf{R}$ such that $[t]\supseteq  [s]\circ [r]$ and hence in particular such that $t\Vdash (\varphi\vdash_I \rho)$.
\end{proof}
\begin{proposition}\label{sym}
There is no symmetric partial applicative structure $\mathcal{R}$.\end{proposition}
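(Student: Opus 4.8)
The plan is to show that for every PAS there is already a single fiber of $\pi_{\mathcal{R}}$ in which $\vdash$ fails to be symmetric; since $\pi_{\mathcal{R}}$ is declared symmetric only when \emph{all} its fibers are, exhibiting one asymmetric fiber suffices to conclude that no symmetric PAS exists. The phenomenon I would exploit is the fundamental asymmetry between the empty set and a nonempty set under the implication-set operation $\Rightarrow$: an antecedent equal to $\emptyset$ makes $\emptyset \Rightarrow B$ vacuously equal to all of $\mathsf{R}$, whereas a consequent equal to $\emptyset$ forces $B \Rightarrow \emptyset$ to be empty as soon as $B$ is nonempty. This is precisely a ``one-directional'' behaviour, and the point is that it is unavoidable for \emph{any} choice of $\cdot$.

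Concretely, I would work in the fiber over a singleton $\{a\}$, which by the isomorphism remarked earlier in the paper is isomorphic in $\mathbf{Bin}$ to $(\mathcal{P}(\mathsf{R}),\vdash)$, so it is enough to find subsets $A,B\subseteq\mathsf{R}$ with $A\vdash B$ but $B\not\vdash A$. I would take $A:=\emptyset$ and $B:=\mathsf{R}$. For the first relation, note $\emptyset\Rightarrow\mathsf{R}=\{r\in\mathsf{R}\mid \forall a\in\emptyset\,(r\cdot a\downarrow\wedge r\cdot a\in\mathsf{R})\}=\mathsf{R}$, which is nonempty because $\mathsf{R}$ is nonempty by the definition of a PAS; hence $\emptyset\vdash\mathsf{R}$. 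For the failure of the reverse relation, I would compute $\mathsf{R}\Rightarrow\emptyset=\{r\in\mathsf{R}\mid \forall a\in\mathsf{R}\,(r\cdot a\downarrow\wedge r\cdot a\in\emptyset)\}$; since $\mathsf{R}$ is nonempty we may pick some $a\in\mathsf{R}$, and the condition $r\cdot a\in\emptyset$ can never hold, so $\mathsf{R}\Rightarrow\emptyset=\emptyset$ and therefore $\mathsf{R}\not\vdash\emptyset$. This witnesses the non-symmetry of $\vdash$ in this fiber, and hence of $\pi_{\mathcal{R}}$.

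There is essentially no hard computation here; the only point requiring care, which I would flag explicitly, is the quantifier bookkeeping around vacuous truth together with the choice of a \emph{nonempty} index set. Over the empty index set $I=\emptyset$ the intersection $\bigcap_{i\in\emptyset}(\cdots)$ is all of $\mathsf{R}$, so that fiber relates everything to everything and is trivially symmetric; this is why I pass to a singleton index rather than argue abstractly. I would also underline that the nonemptiness of $\mathsf{R}$ is doing genuine work twice: it is what guarantees $\emptyset\Rightarrow\mathsf{R}\neq\emptyset$ and, simultaneously, what forces $\mathsf{R}\Rightarrow\emptyset=\emptyset$. Thus the result holds for \emph{every} PAS, with no algebraic or computational hypothesis on $\cdot$ whatsoever.
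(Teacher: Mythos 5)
Your proof is correct and is essentially the paper's own argument: both exploit the pair $A=\emptyset$, $B=\mathsf{R}$ in the fiber over a singleton, using nonemptiness of $\mathsf{R}$ to get $\emptyset\vdash\mathsf{R}$ while $\mathsf{R}\not\vdash\emptyset$. Your write-up merely spells out the computation of the two implication sets and the reduction to a singleton index set in more detail than the paper does.
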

\begin{proof}
Assume $\mathcal{R}$ is symmetric, then $(\mathcal{P}(\mathcal{R}),\vdash)$ is symmetric. 
Since $\emptyset\vdash \mathsf{R}$, then by symmetry $\mathsf{R}\vdash \emptyset$ from which it follows that $\mathsf{R}=\emptyset$, resulting in a contradiction. \end{proof}
\begin{proposition}\label{ant}
A partial applicative structure $\mathcal{R}$ is antisymmetric if and only if $(\mathcal{P}(\mathsf{R}),\vdash)$ is antisymmetric.
\end{proposition}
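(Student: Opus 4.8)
The plan is to prove the two implications separately and to observe that essentially all the content lies in recognizing that antisymmetry of the whole indexed relation is controlled by antisymmetry of a single fiber. Recall that ``$\mathcal{R}$ is antisymmetric'' means by definition that \emph{every} fiber $(\mathcal{P}(\mathsf{R})^{I},\vdash_{I})$ of $\pi_{\mathcal{R}}$ is antisymmetric, whereas the statement only refers to the one object $(\mathcal{P}(\mathsf{R}),\vdash)$.

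For the forward implication I would argue that $(\mathcal{P}(\mathsf{R}),\vdash)$ is, up to isomorphism in $\mathbf{Bin}$, one of the fibers of $\pi_{\mathcal{R}}$. Indeed, as already noted in Section 3, fixing any $a\in\mathsf{R}$ the map $A\mapsto\{(a,A)\}$ is an isomorphism between $(\mathcal{P}(\mathsf{R}),\vdash)$ and the fiber $(\mathcal{P}(\mathsf{R})^{\{a\}},\vdash_{\{a\}})$; directly from the definition of $\vdash_{\{a\}}$, whose defining intersection ranges over the single index $a$, one has that $A\vdash B$ holds exactly when $\{(a,A)\}\vdash_{\{a\}}\{(a,B)\}$ does, so the relation is both preserved and reflected. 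Hence antisymmetry of the fiber over $\{a\}$ is equivalent to antisymmetry of $(\mathcal{P}(\mathsf{R}),\vdash)$, and if $\mathcal{R}$ is antisymmetric this fiber is antisymmetric, giving the conclusion.

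For the converse, assume $(\mathcal{P}(\mathsf{R}),\vdash)$ is antisymmetric; I would fix an arbitrary set $I$ and functions $\varphi,\psi:I\to\mathcal{P}(\mathsf{R})$ with $\varphi\vdash_{I}\psi$ and $\psi\vdash_{I}\varphi$, aiming to conclude $\varphi=\psi$. The key is the componentwise projection already recorded in Section 3: if $\varphi\vdash_{I}\psi$ then $\varphi(i)\vdash\psi(i)$ for every $i\in I$, because a common realizer $r\in\bigcap_{i\in I}(\varphi(i)\Rightarrow\psi(i))$ in particular witnesses $\varphi(i)\Rightarrow\psi(i)\neq\emptyset$ for each fixed $i$. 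Applying this to both hypotheses yields $\varphi(i)\vdash\psi(i)$ and $\psi(i)\vdash\varphi(i)$ for every $i$, whence antisymmetry of $(\mathcal{P}(\mathsf{R}),\vdash)$ forces $\varphi(i)=\psi(i)$ for every $i$, i.e.\ $\varphi=\psi$. Since $I$ was arbitrary, every fiber of $\pi_{\mathcal{R}}$ is antisymmetric, which is exactly antisymmetry of $\mathcal{R}$.

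There is no serious obstacle here, as the proposition is really a reduction lemma. The only point requiring care is the direction of the implication between fiberwise and pointwise entailment: while $\varphi\vdash_{I}\psi$ implies $\varphi(i)\vdash\psi(i)$ for all $i$, the converse fails in general, so one must not attempt to rebuild $\varphi\vdash_{I}\psi$ from pointwise data. Fortunately antisymmetry only uses the valid (fiber-to-point) direction, which is precisely why a single fiber suffices to govern all of them.
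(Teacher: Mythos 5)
Your proposal is correct and follows essentially the same route as the paper: the forward direction identifies $(\mathcal{P}(\mathsf{R}),\vdash)$ with the fiber over a singleton, and the converse uses the pointwise projection $\varphi\vdash_{I}\psi\Rightarrow\varphi(i)\vdash\psi(i)$ together with antisymmetry of $\vdash$ applied index by index. Your added remark that the singleton-fiber isomorphism both preserves and reflects the relation is a slightly more careful justification than the paper gives, but it is the same argument.
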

\begin{proof}
If $\mathcal{R}$ is antisymmetric, then $\pi_{\mathcal{R}}(\{0\})$ is antisymmetric, from which it follows that $(\mathcal{P}(\mathsf{R}), \vdash)$ is antisymmetric.

Conversely, suppose $(\mathcal{P}(\mathsf{R}),\vdash)$ is antisymmetric. If $\varphi\vdash_{I}\psi$ and $\psi\vdash_{I}\varphi$, then $\varphi(i)\vdash\psi(i)$ and $\psi(i)\vdash\varphi(i)$ for every $i\in I$, from which, by the antisymmetry of $\vdash$, $\varphi(i)=\psi(i)$ for every $i\in I$. Thus $\varphi=\psi$.\end{proof}
\begin{proposition}\label{max}
For a partial applicative structure $\mathcal{R}$ the following are equivalent:
\begin{enumerate}
\item $\mathcal{R}$ has right bounds;
\item $\mathcal{R}$ has maximum;
\item $\mathcal{R}$ satisfies {\bf 1-Total}.
\end{enumerate}
\end{proposition}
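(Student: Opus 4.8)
The plan is to establish the cycle of implications $(2)\Rightarrow(1)\Rightarrow(3)\Rightarrow(2)$, which is the most economical route since each single step is short.

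The implication $(2)\Rightarrow(1)$ I would dispatch immediately: in any object of $\mathbf{Bin}$ a maximum is in particular a right bound for every element, so each fiber that has a maximum has right bounds. Since the properties are defined fiberwise, $\mathcal{R}$ having maximum entails $\mathcal{R}$ having right bounds.

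For $(1)\Rightarrow(3)$, I would test the right-bounds hypothesis on a single well-chosen fiber. Using the isomorphism noted in Section~3 between $(\mathcal{P}(\mathsf{R}),\vdash)$ and the fiber over a singleton, right bounds in this fiber say that for every $A\subseteq\mathsf{R}$ there is some $B$ with $A\vdash B$, i.e.\ $A\Rightarrow B\neq\emptyset$. Applying this to $A=\mathsf{R}$ yields a $B$ and an element $r\in \mathsf{R}\Rightarrow B$; by the definition of the implication set, such an $r$ satisfies $r\cdot a\downarrow$ for every $a\in\mathsf{R}$ (the side condition $r\cdot a\in B$ plays no role here), so $[r]$ is total and $\mathbf{1}\textrm{-}\mathbf{Total}$ holds.

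For $(3)\Rightarrow(2)$, I would fix $r$ with $[r]$ total and exhibit, in each fiber over an arbitrary set $I$, the constant function $\psi$ with value $\mathsf{R}$ as the maximum. Given any $\varphi\colon I\to\mathcal{P}(\mathsf{R})$, one has $\varphi(i)\Rightarrow\mathsf{R}=\{t\mid \forall a\in\varphi(i)\,(t\cdot a\downarrow)\}$ because membership in $\mathsf{R}$ is automatic; since $r\cdot a\downarrow$ for every $a\in\mathsf{R}\supseteq\varphi(i)$, we get $r\in\varphi(i)\Rightarrow\psi(i)$ for every $i$, hence $r\in\bigcap_{i\in I}(\varphi(i)\Rightarrow\psi(i))$ and $\varphi\vdash_{I}\psi$. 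Thus $\psi$ is a maximum in the fiber over $I$, and $\mathcal{R}$ has maximum. This argument carries essentially no obstacle; the only point needing a moment's care is that a \emph{single} realizer $r$ of a total function works uniformly across all $\varphi$ and all $i\in I$, which is immediate precisely because totality means $r$ is defined on the whole of $\mathsf{R}$.
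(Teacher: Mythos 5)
Your proposal is correct and follows essentially the same route as the paper: the cycle $2\Rightarrow 1$ (trivial), $1\Rightarrow 3$ (specialize right bounds to the fiber over a singleton and apply it to $A=\mathsf{R}$ to extract a total $[r]$), and $3\Rightarrow 2$ (the constant function with value $\mathsf{R}$ is a maximum realized by $r$). No gaps.
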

\begin{proof}It is enough to prove the following implications.
\begin{enumerate}
\item[$1.\Rightarrow 3.$]If $\mathcal{R}$ has right bounds, then $\pi_{\mathcal{R}}(\{0\})$ has right bounds, from which it follows that $(\mathcal{P}(\mathsf{R}),\vdash)$ has right bounds. In particular, there must exists $A\subseteq \mathsf{R}$ such that $\mathsf{R}\vdash A$. From this it follows that there exists $r\in \mathsf{R}$ such that $[r]$ is total. 
\item[$3.\Rightarrow 2.$] If $r$ represents a total function, then for every set $I$ the constant function $\mathsf{R}_{I}:I\rightarrow \mathcal{P}(\mathsf{R})$ with value $\mathsf{R}$ is a maximum in $\pi_{\mathcal{R}}(I)$, since $r\Vdash (\varphi\vdash_I \mathsf{R}_{I})$ for every $\varphi\in \pi_{\mathcal{R}}(I)$.
\item[$2.\Rightarrow 1.$] Trivial, by definition.
\end{enumerate}
\end{proof}
\begin{corollary}\label{maxnomax}
Every reflexive PAS has maximum.\end{corollary}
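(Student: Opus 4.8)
The plan is to chain together three results already established in the excerpt. The corollary asserts that reflexivity forces the existence of a maximum in every fiber, and each of these two notions has already been translated into an elementary algebraic or computational property, so the entire argument reduces to composing those translations.

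First I would invoke Proposition \ref{ref}, which tells us that a PAS $\mathcal{R}$ is reflexive if and only if it satisfies $\mathbf{Id}$. Hence from the hypothesis that $\mathcal{R}$ is reflexive I immediately obtain an element $\mathbf{i}\in\mathsf{R}$ with $\mathbf{i}\cdot a=a$ for every $a\in\mathsf{R}$; in particular $[\mathbf{i}]=\mathsf{id}_{\mathsf{R}}$ is a total representable function. Next I would appeal to Proposition \ref{basic}(3), according to which $\mathbf{Id}\Rightarrow\mathbf{1}\text{-}\mathbf{Total}$, so that $\mathcal{R}$ has at least one representable total function. (Alternatively, one sees this directly: the identity witnessing $\mathbf{Id}$ is itself a total representable function.)

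Finally I would apply Proposition \ref{max}, whose equivalence $(3)\Leftrightarrow(2)$ states precisely that a PAS satisfies $\mathbf{1}\text{-}\mathbf{Total}$ if and only if it has maximum. Composing the three steps yields reflexive $\Rightarrow\mathbf{Id}\Rightarrow\mathbf{1}\text{-}\mathbf{Total}\Rightarrow$ has maximum, which is the claim. Concretely, the constant function $\mathsf{R}_I$ with value $\mathsf{R}$ serves as the maximum in each fiber $\pi_{\mathcal{R}}(I)$, realized uniformly by $\mathbf{i}$, since $\mathbf{i}\Vdash(\varphi\vdash_I\mathsf{R}_I)$ for every $\varphi$.

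There is essentially no obstacle here: the corollary is a bookkeeping consequence of Propositions \ref{ref}, \ref{basic} and \ref{max}, all of which carry the genuine content. The only point deserving a moment's care is making sure the middle implication is cited in the correct direction (that $\mathbf{Id}$ guarantees $\mathbf{1}\text{-}\mathbf{Total}$, not conversely) and that Proposition \ref{max} is used in the direction producing a maximum from $\mathbf{1}\text{-}\mathbf{Total}$; once these are lined up, the proof is a single sentence.
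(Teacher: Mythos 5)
Your proof is correct and follows exactly the route the paper intends: reflexivity gives $\mathbf{Id}$ by Proposition \ref{ref}, hence $\mathbf{1}$-$\mathbf{Total}$ by Proposition \ref{basic}(3), hence a maximum by Proposition \ref{max}. The paper leaves this chain implicit, and your citations are in the correct directions throughout.
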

\begin{proposition}
Every PAS has minimum. Hence in particular every PAS has left bounds.
\end{proposition}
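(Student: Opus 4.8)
The plan is to exhibit, in every fiber $\pi_{\mathcal{R}}(I)=(\mathcal{P}(\mathsf{R})^{I},\vdash_{I})$, an explicit element lying below all others with respect to $\vdash_{I}$. The natural candidate is the constant map $\bot_{I}:I\to\mathcal{P}(\mathsf{R})$ with value $\emptyset$, i.e.\ $\bot_{I}(i)=\emptyset$ for every $i\in I$. The guiding intuition is purely logical: the ``always false'' proposition entails everything, so it should be the bottom element, and this should hold regardless of the operation $\cdot$.

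The first step is to compute the implication set $\emptyset\Rightarrow B$ for an arbitrary $B\subseteq\mathsf{R}$. By definition, $r\in\emptyset\Rightarrow B$ requires $r\cdot a\downarrow$ and $r\cdot a\in B$ for every $a\in\emptyset$; since there is no such $a$, the condition is vacuously satisfied by every $r\in\mathsf{R}$, whence $\emptyset\Rightarrow B=\mathsf{R}$. The second step is then to unfold $\bot_{I}\vdash_{I}\psi$ for an arbitrary $\psi\in\mathcal{P}(\mathsf{R})^{I}$: it asks that $\bigcap_{i\in I}(\bot_{I}(i)\Rightarrow\psi(i))=\bigcap_{i\in I}(\emptyset\Rightarrow\psi(i))=\bigcap_{i\in I}\mathsf{R}=\mathsf{R}$ be non-empty. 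Since $\mathsf{R}\neq\emptyset$ by the standing assumption that a PAS has non-empty carrier (and the empty intersection is $\mathsf{R}$ in the degenerate case $I=\emptyset$), we obtain $\bot_{I}\vdash_{I}\psi$ for every $\psi$. Thus $\bot_{I}$ is a minimum of $\pi_{\mathcal{R}}(I)$, and as $I$ was arbitrary, $\mathcal{R}$ has minimum.

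The closing ``hence'' clause is immediate: a minimum is in particular a left bound for every element, since for each $\psi\in\pi_{\mathcal{R}}(I)$ the element $\bot_{I}$ witnesses $\bot_{I}\vdash_{I}\psi$. There is essentially no obstacle in this argument; the only subtle point is to read $\emptyset\Rightarrow B$ vacuously and to invoke the non-emptiness of $\mathsf{R}$. It is worth noting that this is precisely the asymmetry exploited in Proposition \ref{sym}: here $\bot_{I}\vdash_{I}\psi$ always holds because $\emptyset\Rightarrow\psi(i)=\mathsf{R}$, whereas the reverse entailment $\mathsf{R}\Rightarrow\emptyset$ is empty unless $\mathsf{R}=\emptyset$, which is exactly why no dual ``maximum given by $\emptyset$'' can exist and why symmetry fails.
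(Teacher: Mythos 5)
Your proof is correct and takes exactly the same route as the paper: the constant function with value $\emptyset$ is the minimum, since $\emptyset\Rightarrow B=\mathsf{R}\neq\emptyset$ vacuously. You merely spell out the details (and the link to the failure of symmetry) that the paper leaves implicit.
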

\begin{proof}
For every $I$, a minimum for $\pi_\mathcal{R}(I)$ is the constant function $\emptyset_I:I\rightarrow \mathcal{P}(\mathsf{R})$ with value $\emptyset$.
\end{proof}

\begin{remark} Notice that minima are preserved by $\pi_\mathcal{R}(f)$ for every function $f$ and every PAS $\mathcal{R}$, since every fiber $\pi_{\mathcal{R}}(I)$ has only one minimum, that is $\emptyset_I$, and $\emptyset_I\circ f=\emptyset_J$ for every function $f:J\rightarrow I$. Moreover, if $\mathcal{R}$ has maximum, then $\pi_\mathcal{R}(f)$ preserves maxima for every function $f$. This follows from the fact that $\top_I$ is a maximum in $\pi_{\mathcal{R}}(I)$ if and only if $\mathsf{R}_I\vdash_I \top_I$, and from the fact that $\mathsf{R}_I\circ f=\mathsf{R}_J$ for every function $f:J\rightarrow I$.
\end{remark}
We introduce the abbreviations $\mathbf{Ant}$ and $\mathbf{Trans}$ for the property of $\mathcal{R}$ being antisymmetric and transitive, respectively.  

Before proceeding, we will go back to the previously introduced properties in order to see whether they have an intepretation in terms of the indexed relations induced by $\mathcal{R}$. As we have already seen, {\bf 1-Total} is equivalent to requiring the fibers of $\pi_{\mathcal{R}}$ to have maxima, while $\mathbf{Id}$ is equivalent to the reflexivity of $\pi_{\mathcal{R}}$. Moreover we have the following.

\begin{proposition} A PAS $\mathcal{R}$ satisfies $\mathbf{Const}$ if and only if $A\vdash B$ for every pair of non-empty subsets $A,B$ of $\mathsf{R}$.
\end{proposition}
\begin{proof}
If $\mathbf{Const}$ holds, $A\subseteq \mathsf{R}$ and $b\in B\subseteq \mathsf{R}$, then we can find $c\in \mathsf{R}$ such that $c\cdot x=b$ for every $x\in \mathsf{R}$; in particular $c\in A\Rightarrow B$, from which it follows that $A\vdash B$. Conversely, if $A\vdash B$ for every pair of non-empty subsets $A,B$ of $\mathsf{R}$ , then $\mathsf{R}\vdash \{a\}$ for every $a\in \mathsf{R}$, from which it follows that there exists $c\in \mathsf{R}$ such that $c\cdot x=a$ for every $x\in \mathsf{R}$; thus $\mathbf{Const}$ holds.
\end{proof}

\begin{proposition} A PAS $\mathcal{R}$ satisfies $\mathbf{TM}$ if and only if $\{a\}\vdash \{b\}$ for every pair of elements $a,b$ of $\mathsf{R}$.
\end{proposition}
\begin{proof}
$\mathbf{TM}$ holds if and only if for every $a$ and $b$ there exists $r$ such that $r\cdot a=b$ if and only if $\{a\}\vdash \{b\}$ for every $a$ and $b$.
\end{proof}
\begin{proposition}\label{car1w}
A PAS satisfies $\mathbf{1W}$ if and only if $\vdash$ is antisymmetric restricted to singletons of $\mathsf{R}$.
\end{proposition}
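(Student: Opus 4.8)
The plan is to reduce both sides of the biconditional to the same elementary statement about the application operation, via a single translation lemma. The crucial first step is to spell out what $\{a\} \vdash \{b\}$ means at the level of elements. Unwinding the definitions, $\{a\} \Rightarrow \{b\} = \{r \in \mathsf{R} \mid r \cdot a \downarrow \wedge\, r \cdot a \in \{b\}\}$, so $\{a\} \vdash \{b\}$ holds (i.e.\ this set is non-empty) exactly when there exists $r \in \mathsf{R}$ with $r \cdot a = b$. This is the same translation already used in the characterization of $\mathbf{TM}$ just above, and once it is in place the proposition becomes a matter of matching quantifier patterns.

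For the forward direction, I would assume $\mathbf{1W}$ and take $a, b \in \mathsf{R}$ with $\{a\} \vdash \{b\}$ and $\{b\} \vdash \{a\}$. By the translation there are $r, s \in \mathsf{R}$ with $r \cdot a = b$ and $s \cdot b = a$. These are precisely the hypotheses of $\mathbf{1W}$ for the tuple $(r, s, a, b)$, so $\mathbf{1W}$ yields $a = b$, i.e.\ $\{a\} = \{b\}$; hence $\vdash$ is antisymmetric on singletons.

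For the converse, I would assume antisymmetry of $\vdash$ on singletons and verify $\mathbf{1W}$ directly. Given $r, s, a, b \in \mathsf{R}$ with $r \cdot a = b$ and $s \cdot b = a$, the witnesses $r$ and $s$ give $\{a\} \vdash \{b\}$ and $\{b\} \vdash \{a\}$ respectively; antisymmetry then forces $\{a\} = \{b\}$, so $a = b$, which is exactly what $\mathbf{1W}$ requires.

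I do not expect any genuine obstacle here: the content is entirely in the definitional unfolding of $\vdash$ on singletons, after which both implications are immediate and symmetric to one another. The only point requiring the slightest care is keeping the roles of the existentially quantified realizers $r, s$ (in the definition of $\vdash$) and the universally quantified $r, s$ (in the statement of $\mathbf{1W}$) straight, but since each direction simply reuses the witnesses supplied by the other notion, this bookkeeping is routine.
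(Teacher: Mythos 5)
Your proof is correct and follows exactly the route the paper intends: the paper simply declares the proposition an immediate consequence of the definition of $\mathbf{1W}$, and your definitional unfolding of $\{a\}\vdash\{b\}$ as the existence of $r$ with $r\cdot a=b$ is precisely that argument made explicit. No issues.
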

\begin{proof}
This is an immediate consequence of the definition of $\mathbf{1W}$.
\end{proof}
\begin{proposition}\label{compant}
Let $\mathcal{R}$ be a PAS, then $\mathbf{Comp}\Rightarrow \mathbf{Trans}$.
\end{proposition}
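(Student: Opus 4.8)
The plan is to reduce the claim to the characterization of transitivity already established in Proposition~\ref{tra}. That proposition tells us that $\mathcal{R}$ is transitive precisely when, for every $r,s\in\mathsf{R}$, there exists $t\in\mathsf{R}$ with $[t]\supseteq[s]\circ[r]$, i.e.\ $t\cdot a\downarrow$ and $s\cdot(r\cdot a)=t\cdot a$ whenever $s\cdot(r\cdot a)\downarrow$. So instead of arguing about the fibers of $\pi_{\mathcal{R}}$ directly, I would aim to produce, from the hypothesis $\mathbf{Comp}$, a $t$ witnessing this inclusion for each pair $r,s$.

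First I would fix arbitrary $r,s\in\mathsf{R}$ and invoke $\mathbf{Comp}$ to obtain some $t\in\mathsf{R}$ with $[t]=[s]\circ[r]$; by definition of $\mathbf{Comp}$ this means $s\cdot(r\cdot a)\simeq t\cdot a$ for every $a\in\mathsf{R}$. The key observation is simply that equality of the represented partial functions is stronger than the inclusion required by Proposition~\ref{tra}: from $[t]=[s]\circ[r]$ we get in particular $[t]\supseteq[s]\circ[r]$, since $\simeq$ forces agreement on domains of definition and hence on values wherever $s\cdot(r\cdot a)$ is defined.

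Having produced, for each $r,s$, a $t$ with $[t]\supseteq[s]\circ[r]$, I would then appeal to the ``conversely'' direction of Proposition~\ref{tra} to conclude that $\mathcal{R}$ is transitive, that is $\mathbf{Trans}$ holds. Since $r,s$ were arbitrary, this completes the argument. I do not anticipate a genuine obstacle here: the only content is recognizing that $\mathbf{Comp}$ furnishes an \emph{equality} $[t]=[s]\circ[r]$ while transitivity only demands the weaker \emph{inclusion} $[t]\supseteq[s]\circ[r]$, so the implication is immediate once Proposition~\ref{tra} is in hand. The mild subtlety worth stating explicitly is the reading of $s\cdot(r\cdot a)\simeq t\cdot a$ as entailing the one-sided condition ``if $s\cdot(r\cdot a)\downarrow$ then $t\cdot a\downarrow$ and the values coincide,'' which is exactly what $\simeq$ provides.
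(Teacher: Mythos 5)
Your proposal is correct and follows exactly the paper's route: the paper's proof is the one-line remark that this is a consequence of Proposition~\ref{tra}, and your argument simply spells out that the equality $[t]=[s]\circ[r]$ furnished by $\mathbf{Comp}$ implies the inclusion $[t]\supseteq[s]\circ[r]$ required by that characterization.
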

\begin{proof}
This is a consequence of Proposition \ref{tra}. \end{proof}

As we have seen $\mathbf{Comp}$ together with $\mathbf{Id}$ implies that the partial functions represented by $\mathcal{R}$ form a submonoid of the monoid of partial functions. In the case in which $\mathbf{Trans}$ holds in place of $\mathbf{Comp}$ this is no longer true in general, however we have the following:
\begin{proposition}If $\mathcal{R}$ satisfies $\mathbf{Trans}$ and $\mathbf{Id}$, then $$\{f\in \mathsf{R}^{\mathsf{R}}|\,\exists\,r\in \mathsf{R}\,(f=[r])\}$$ is a submonoid of the monoid $(\mathsf{R}^{\mathsf{R}},\circ)$.
\end{proposition}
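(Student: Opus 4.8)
The plan is to verify the two defining conditions for a sub-structure of the monoid $(\mathsf{R}^{\mathsf{R}},\circ)$ to be a submonoid: that the set contains the neutral element $\mathsf{id}_{\mathsf{R}}$, and that it is closed under composition. Throughout, note that an element $f$ of the set is by definition both a \emph{total} function $\mathsf{R}\to\mathsf{R}$ and of the form $[r]$ for some $r\in\mathsf{R}$; so closure must produce a representative \emph{and} guarantee totality of the composite.

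The neutral element is immediate from $\mathbf{Id}$: the witness $\mathbf{i}$ satisfies $[\mathbf{i}]=\mathsf{id}_{\mathsf{R}}$, and $\mathsf{id}_{\mathsf{R}}$ is total, so it belongs to the set.

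For closure I would take two elements of the set, say $[r]$ and $[s]$, both total by hypothesis, and aim to exhibit $t\in\mathsf{R}$ with $[t]=[s]\circ[r]$, together with totality of $[t]$. First, since $[r]$ and $[s]$ are total their composite $[s]\circ[r]$ is total: for every $a\in\mathsf{R}$ the value $r\cdot a$ is defined, and then $s\cdot(r\cdot a)$ is defined. Next I would invoke Proposition \ref{tra}: by $\mathbf{Trans}$ there is $t\in\mathsf{R}$ with $[t]\supseteq [s]\circ[r]$.

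The one point requiring care---and the reason the statement restricts to total functions, as anticipated in the preceding remark---is that Proposition \ref{tra} yields only the inclusion $[t]\supseteq [s]\circ[r]$, not an equality, so $\mathbf{Trans}$ alone does not close the represented partial functions under composition. Here totality is exactly what bridges the gap: since $[s]\circ[r]$ is defined on all of $\mathsf{R}$, its graph already prescribes a value at every $a\in\mathsf{R}$, and the inclusion forces $t\cdot a\downarrow$ with $t\cdot a=s\cdot(r\cdot a)$ for every $a$. As $[t]$ is single-valued, this upgrades the inclusion to $[t]=[s]\circ[r]$; in particular $[t]$ is total and equals the composite. Hence $[s]\circ[r]$ lies in the set, which establishes closure and completes the argument. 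I expect this totality-driven upgrade from $\supseteq$ to $=$ to be the only genuinely delicate step; everything else is bookkeeping.
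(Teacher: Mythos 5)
Your proof is correct. The paper actually states this proposition without providing a proof, and your argument supplies exactly the intended reasoning: the only nontrivial point is upgrading the inclusion $[t]\supseteq [s]\circ[r]$ from Proposition \ref{tra} to an equality, which, as you observe, follows because $[s]\circ[r]$ is total and $[t]$ is single-valued, so restricting to \emph{total} representable functions is precisely what makes $\mathbf{Trans}$ suffice in place of $\mathbf{Comp}$.
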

\subsection{Preorderal partial applicative structures}
\begin{definition}
A partial applicative structure $\mathcal{R}$ is \emph{preorderal} if $\pi_{\mathcal{R}}$ factors through the subcategory $\mathbf{Pord}$ of $\mathbf{Bin}$ of preordered sets, that is for every set $I$, $\vdash_{I}$ is a preorder on $\mathcal{P}(\mathsf{R})^{I}$.
\end{definition}

Putting together Propositions \ref{ref} and \ref{tra} we obtain the following
\begin{theorem}
A partial applicative structure $\mathcal{R}$ is preorderal if and only if 
\begin{enumerate}
\item there exists $\mathbf{i}\in \mathsf{R}$ such that $\mathbf{i}\cdot a=a$ for every $a\in \mathsf{R}$ and
\item for every $r,s\in \mathsf{R}$, there exists $t\in \mathsf{R}$ such that for every $a\in A$, if $s\cdot (r\cdot a)\downarrow$, then $t\cdot a\downarrow $ and $s\cdot (r\cdot a)=t\cdot a$.
\end{enumerate}
\end{theorem}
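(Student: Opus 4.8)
The plan is to reduce the statement to the two characterizations already established, since a preorder is by definition nothing more than a binary relation that is simultaneously reflexive and transitive. The key observation is that $\pi_{\mathcal{R}}$ factors through the subcategory $\mathbf{Pord}$ of preordered sets precisely when every fiber $(\mathcal{P}(\mathsf{R})^{I},\vdash_{I})$ is a preorder, and a relation is a preorder exactly when it is both reflexive and transitive. Unwinding the convention that $\mathcal{R}$ \emph{has} a property when all fibers of $\pi_{\mathcal{R}}$ have it, this says that $\mathcal{R}$ is preorderal if and only if $\mathcal{R}$ is reflexive and $\mathcal{R}$ is transitive.

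First I would record this decomposition explicitly: $\vdash_{I}$ is a preorder for every set $I$ if and only if $\vdash_{I}$ is reflexive for every $I$ and $\vdash_{I}$ is transitive for every $I$. The forward direction is immediate since a preorder is in particular reflexive and transitive; the backward direction is immediate since reflexivity together with transitivity is exactly the definition of a preorder. Applying the definitions of $\mathcal{R}$ being reflexive and of $\mathcal{R}$ being transitive, this yields that $\mathcal{R}$ is preorderal if and only if it is both reflexive and transitive.

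Next I would invoke the two earlier characterizations. By Proposition \ref{ref}, reflexivity of $\mathcal{R}$ is equivalent to $\mathbf{Id}$, that is, to the existence of $\mathbf{i}\in\mathsf{R}$ with $\mathbf{i}\cdot a=a$ for every $a\in\mathsf{R}$, which is exactly condition $1$. By Proposition \ref{tra}, transitivity of $\mathcal{R}$ is equivalent to the condition that for all $r,s\in\mathsf{R}$ there is $t\in\mathsf{R}$ with $[t]\supseteq[s]\circ[r]$, which is exactly condition $2$. Conjoining these two equivalences gives the claimed characterization.

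I do not expect any genuine obstacle here, as the argument is a purely formal combination of Propositions \ref{ref} and \ref{tra} via the elementary fact that ``preorder'' factors as ``reflexive $\wedge$ transitive''. The only point requiring care is the bookkeeping step connecting the fiberwise notion of preorder to the indexed-relation notions of reflexivity and transitivity, namely checking that the quantifier over all sets $I$ distributes correctly across the conjunction; this is trivial but worth stating, so that the application of the two propositions (each phrased for the indexed relation as a whole) is justified.
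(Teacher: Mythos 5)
Your proposal is correct and matches the paper exactly: the paper states the theorem as an immediate consequence of ``putting together Propositions \ref{ref} and \ref{tra}'', which is precisely your decomposition of ``preorder'' into ``reflexive $\wedge$ transitive'' followed by the two fiberwise characterizations. The only difference is that you spell out the (trivial) distribution of the quantifier over fibers across the conjunction, which the paper leaves implicit.
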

The theorem above shows that $\mathcal{R}$ is preorderal if and only if $\{[r]|\,r\in \mathsf{R}\}$ is an untyped lax computability model according to \cite{LN}.

A nice remark is that the notion of preorderal PAS turns out to be a generalization of the two orthogonal notions of monoid and of PCA.
\begin{proposition} The following hold:
\begin{enumerate}
\item If $\mathcal{R}$ is a monoid and a PCA, then it is trivial;
\item If $\mathcal{R}$ is a monoid, then $\mathcal{R}$ is preorderal;
\item If $\mathcal{R}$ is a PCA, then $\mathcal{R}$ is preorderal.
\end{enumerate}
\end{proposition}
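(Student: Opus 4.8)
The plan is to prove all three parts by reducing to properties already established, leaning on the characterization theorem above: a PAS is preorderal exactly when it satisfies $\mathbf{Id}$ together with the transitivity condition of Proposition~\ref{tra}, i.e.\ $\mathbf{Id}\wedge\mathbf{Trans}$. Since Proposition~\ref{compant} tells us $\mathbf{Comp}\Rightarrow\mathbf{Trans}$, for parts 2 and 3 it will in fact suffice to exhibit $\mathbf{Id}$ and $\mathbf{Comp}$ and then invoke that theorem.

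For part 3, I would simply quote Proposition~\ref{PCAthm}, which already shows that every PCA satisfies both $\mathbf{Id}$ and $\mathbf{Comp}$; Proposition~\ref{compant} upgrades $\mathbf{Comp}$ to $\mathbf{Trans}$, and the characterization theorem then yields that $\mathcal{R}$ is preorderal. For part 2, a monoid satisfies $\mathbf{Id}$ by definition and also $\mathbf{Total}\wedge\mathbf{Assoc}$; by Proposition~\ref{interact}(2) the latter gives $\mathbf{Comp}$ (indeed $[s\cdot r]=[s]\circ[r]$), whence $\mathbf{Trans}$ by Proposition~\ref{compant}, and again the characterization theorem delivers preorderality.

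For part 1, I would observe that a structure which is simultaneously a monoid and a PCA satisfies both $\mathbf{Assoc}$ (as a monoid) and $\mathbf{K}$ (as a PCA); Proposition~\ref{Ktriv} then forces $\mathbf{Trivial}$ directly through the equivalence $\mathbf{Trivial}\Leftrightarrow\mathbf{K}\wedge\mathbf{Assoc}$. One could equally use $\mathbf{ID}_R$ in place of $\mathbf{Assoc}$, or cite the corresponding collapse in Proposition~\ref{PCAtriv}.

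I do not expect a genuine obstacle here, since every step is a direct citation of a prior result; the proposition is essentially a bookkeeping corollary assembling Propositions~\ref{interact},~\ref{Ktriv},~\ref{PCAthm} and~\ref{compant}. The only point deserving a moment's care is recognizing that condition 2 of the preorderal characterization is literally the transitivity condition of Proposition~\ref{tra}, so that \emph{preorderal} decomposes cleanly as $\mathbf{Id}\wedge\mathbf{Trans}$ and hence is implied by $\mathbf{Id}\wedge\mathbf{Comp}$; once that identification is made, parts 2 and 3 are immediate.
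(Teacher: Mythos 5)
Your proof is correct and follows essentially the same route as the paper: part 1 via the $\mathbf{K}\wedge\mathbf{Assoc}$ collapse (the paper cites Proposition~\ref{PCAtriv}, which is the same fact packaged for PCAs), part 2 via Proposition~\ref{interact}(2) and Proposition~\ref{compant}, and part 3 via Propositions~\ref{PCAthm} and~\ref{compant}. Your identification of \emph{preorderal} with $\mathbf{Id}\wedge\mathbf{Trans}$ is exactly the content of the characterization theorem, so nothing further is needed.
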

\begin{proof}
1.\ is a consequence of Proposition \ref{PCAtriv}, 2.\ a consequence of Proposition \ref{interact}(2) and \ref{compant}, and 3.\ a consequence of Propositions \ref{PCAthm} and \ref{compant}.
\end{proof}

\section{Antisymmetric partial applicative structures}
We have already characterized antisymmetric partial applicative structures in Proposition \ref{ant}. However, we would like to obtain a different, more meaningful and usable, characterization in terms of the properties of the elements of $\mathsf{R}$, of the partial functions they represent and of the operation $\cdot$.

\subsection{Antisymmetry and two kinds of partial functions}
\begin{definition}
Let $f:\mathsf{R}\rightharpoonup \mathsf{R}$ be a partial function defined on a non-empty set $\mathsf{R}$. For every $a\in \mathsf{R}$ we define its \emph{order} $\mathsf{ord}_f(a)$ relative to $f$ as the minimum $n\in \mathbb{N}^{+}$ such that $f^n(a)=a$, if such an $n$ exists, and as $+\infty$ otherwise. Moreover, for every $a\in \mathsf{R}$ we define its \emph{divergence} $\delta_f(a)$ relative to $f$ as the minimum $n\in \mathbb{N}^{+}$ such that $f^{n}(a)\not\downarrow$, if such an $n$ exists, and as $+\infty$ otherwise. We define the \emph{divergence} $\Delta(f)$ of $f$ as the supremum of the set $$\{\delta_f(a)|\,a\in \mathsf{R}\,\wedge \,\delta_f(a)<+\infty\}$$
This supremum is taken to be $1$ if such a set is empty.

\end{definition}

Notice that if $f:\mathsf{R}\rightharpoonup \mathsf{R}$, $a\in \mathsf{R}$, $\mathsf{ord}_f(a)<+\infty$ and $0\leq i<j<\mathsf{ord}_f(a)$, then $f^{i}(a)\neq f^{j}(a)$.
Similarly, if $f:\mathsf{R}\rightharpoonup \mathsf{R}$, $a\in \mathsf{R}$, $\delta_f(a)<+\infty$ and $0\leq i<j<\delta_f(a)$, then $f^{i}(a)\neq f^{j}(a)$.

\begin{definition} A partial function $f:\mathsf{R}\rightharpoonup \mathsf{R}$ is TO if for every $a\in \mathsf{R}$ one has $\mathsf{ord}_f(a)=2k+1$ for some $k\in \mathbb{N}$ or $\delta_{f}(a)\in \mathbb{N}^{+}$. A partial function $f:\mathsf{R}\rightharpoonup \mathsf{R}$ is TL if for every $a\in \mathsf{R}$ one has $\mathsf{ord}_f(a)=1$ or $\delta_{f}(a)\in \mathbb{N}^{+}$.
\end{definition}

Letters T, L and O in TO and TL stay for \emph{trees}, \emph{loops} and \emph{odd cycles}, respectively. The reason of the names is evident if we represent a partial function $f:\mathsf{R}\rightharpoonup \mathsf{R}$ as a directed graph (possibly with loops) of which the vertices are the elements of $\mathsf{R}$ and of which the edges are the pairs $(a,f(a))$ with $a\in \mathsf{Dom}(f)$. Indeed, $f$ is TO if and only if the associated directed graph results in a disjoint union of oriented odd cycles and (directed rooted) in-trees (roots are those vertices having outdegree equal to 0), see \cite{graph}.  Instead, $f$ is TL if and only if the associated directed graph results in a disjoint union of loops and (directed rooted) in-trees.

Since every loop is an odd cycle (its lenght is $1$), if $f$ is TL, then $f$ is TO.

An example of a TL function is the partial function $\mathbf{pr}:\mathbb{N}\rightharpoonup \mathbb{N}$ such that $\mathbf{pr}(n)=m$ if and only if $m+1=n$. Indeed, $\delta_{f}(n)=n+1$ for every $n\in \mathbb{N}$. An example of TO total function is the function $\mathsf{inv}_3:\mathbb{N}\rightarrow \mathbb{N}$ defined by $\mathsf{inv}_3(3n)=3n+1$, $\mathsf{inv}_3(3n+1)=3n+2$ and $\mathsf{inv}_3(3n+2)=3n$ for every $n\in \mathbb{N}$.\\

The next proposition is an immediate consequence of the definitions.
\begin{proposition}\label{totalTOTL} Let $f:\mathsf{R}\rightarrow \mathsf{R}$ be a total function. Then,
\begin{enumerate}
\item $f$ is TO if and only if for every $a\in \mathsf{R}$ we have $\mathsf{ord}_{f}(a)=2k+1$ for some $k\in \mathbb{N}$;
\item $f$ is TL if and only if $f=\mathsf{id}_{\mathsf{R}}$.
\end{enumerate}
\end{proposition}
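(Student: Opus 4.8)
The plan is to exploit the totality hypothesis to kill the divergence disjunct appearing in both definitions, after which each of the two equivalences collapses to a direct reading of the relevant definition. There is no genuine obstacle here; the proposition is an immediate consequence of the definitions once the right observation is isolated.

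First I would record the key observation. Since $f$ is total, $f^{n}(a)\downarrow$ for every $a\in\mathsf{R}$ and every $n\in\mathbb{N}^{+}$, so the set of $n$ for which $f^{n}(a)\not\downarrow$ is empty; by definition of divergence this means $\delta_f(a)=+\infty$ for every $a\in\mathsf{R}$. In particular the disjunct ``$\delta_f(a)\in\mathbb{N}^{+}$'' that occurs in the definitions of both TO and TL can never be satisfied when $f$ is total, and may therefore be discarded.

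For item 1, once this disjunct is removed, the statement ``$f$ is TO'' unwinds to: for every $a\in\mathsf{R}$ there is $k\in\mathbb{N}$ with $\mathsf{ord}_f(a)=2k+1$, which is verbatim the condition in the proposition, so the equivalence is immediate. For item 2, the same removal reduces ``$f$ is TL'' to the requirement that $\mathsf{ord}_f(a)=1$ for every $a\in\mathsf{R}$. Here I would use that $\mathsf{ord}_f(a)=1$ is, by definition, the assertion $f^{1}(a)=f(a)=a$, i.e.\ that $a$ is a fixed point of $f$; requiring this for all $a$ is exactly $f=\mathsf{id}_{\mathsf{R}}$. Conversely, if $f=\mathsf{id}_{\mathsf{R}}$ then $f(a)=a$ forces $\mathsf{ord}_f(a)=1$ for every $a$ (and $\delta_f(a)=+\infty$), so $f$ is TL.

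The only step requiring any care is the initial observation that totality forces $\delta_f$ to be identically $+\infty$; everything else is a mechanical substitution into the definitions of TO and TL, and for item 2 the elementary reading of $\mathsf{ord}_f(a)=1$ as fixed-pointhood. Accordingly I expect the proof to be short, with no case analysis beyond noting that the divergence alternative is vacuous in the total setting.
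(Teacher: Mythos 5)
Your argument is correct and matches the paper's intent: the paper gives no explicit proof, stating only that the proposition ``is an immediate consequence of the definitions,'' and your observation that totality forces $\delta_f(a)=+\infty$ for every $a$, so that the divergence disjunct in the definitions of TO and TL is vacuous, is exactly the intended reduction. The remaining unwinding of $\mathsf{ord}_f(a)=1$ as $f(a)=a$ is likewise the expected step.
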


\begin{proposition}\label{comptl}
If $f$ is TO (resp.\ TL), then $f^{n}$ is TO (resp.\ TL) for every $n\in \mathbb{N}$.
\end{proposition}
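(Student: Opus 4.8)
The plan is to reduce the claim to two arithmetic observations about how the invariants $\mathsf{ord}_f(a)$ and $\delta_f(a)$ transform when $f$ is replaced by $f^n$, using throughout that $(f^n)^m=f^{nm}$ as partial functions. First I would treat the degenerate case $n=0$ separately: here $f^0=\mathsf{id}_{\mathsf{R}}$, and since $\mathsf{ord}_{\mathsf{id}}(a)=1$ for every $a$, the identity satisfies both defining conditions, so it is TL and hence TO. From now on assume $n\ge 1$ and fix $a\in\mathsf{R}$. The key preliminary remark is that, because $f$ is TO (resp. TL), the element $a$ lies in exactly one of two mutually exclusive situations: either $\delta_f(a)<+\infty$, or $\mathsf{ord}_f(a)<+\infty$. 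Indeed these two cannot both be finite (a periodic orbit never diverges), and the remaining possibility---both equal to $+\infty$, i.e.\ an infinite non-repeating, non-diverging orbit---is precisely what the TO/TL hypothesis forbids.

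The two observations I would then prove are these. (A) Suppose $\delta_f(a)=d<+\infty$. Then $f^{j}(a)$ is defined exactly for $j<d$: for $j\ge d$ it diverges because applying $f$ to an undefined value remains undefined. Hence $f^{nm}(a)$ diverges iff $nm\ge d$, so $\delta_{f^n}(a)=\lceil d/n\rceil$, which is again finite. (B) Suppose $\mathsf{ord}_f(a)=p<+\infty$. Then $a,f(a),\dots,f^{p-1}(a)$ are distinct and $f^p(a)=a$, so $f$ permutes this orbit cyclically with period $p$ and never diverges on it; consequently $f^{nm}(a)=a$ iff $p\mid nm$, and the least positive such $m$ is $p/\gcd(n,p)$. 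Thus $\mathsf{ord}_{f^n}(a)=p/\gcd(n,p)$, a divisor of $p$.

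Granting (A) and (B), the conclusion follows at once. If $f$ is TO, then for each $a$ either $\delta_f(a)$ is finite, in which case $\delta_{f^n}(a)$ is finite by (A), or $\mathsf{ord}_f(a)=p$ is a finite odd number, in which case $\mathsf{ord}_{f^n}(a)=p/\gcd(n,p)$ divides the odd number $p$ and so is itself odd by (B); either way $a$ satisfies the TO condition for $f^n$. If $f$ is TL the same dichotomy applies, with the second case now giving $p=1$ and hence $\mathsf{ord}_{f^n}(a)=1/\gcd(n,1)=1$, so that $a$ remains a fixed point of $f^n$. As $a$ was arbitrary, $f^n$ is TO (resp. TL).

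The step requiring the most care---and where I expect the only real obstacle---is the partiality bookkeeping in (A): one must verify that divergence genuinely propagates, so that $f^j(a)$ is undefined for all $j\ge d$ rather than merely at $j=d$, and that this pins down the exact set of indices $nm\ge d$ on which $f^{nm}(a)$ diverges. The accompanying number theory (that $p/\gcd(n,p)$ divides $p$, and that $\lceil d/n\rceil\ge 1$) is routine, as is the verification that the two cases in the preliminary remark are genuinely exhaustive under the hypotheses.
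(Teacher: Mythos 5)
Your proof is correct and follows essentially the same route as the paper's: treat $n=0$ separately, then for $n\ge 1$ observe that finite divergence is inherited (the paper just notes $1\le\delta_{f^n}(a)\le\delta_f(a)$) and that $\mathsf{ord}_{f^n}(a)$ is a divisor of $\mathsf{ord}_f(a)$, hence odd (resp.\ equal to $1$) when the latter is odd (resp.\ $1$). Your exact formulas $\lceil d/n\rceil$ and $p/\gcd(n,p)$ are sharper than what the paper states, but the underlying argument is the same.
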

\begin{proof}
Let $f:\mathsf{R}\rightharpoonup \mathsf{R}$ be a partial function. For $n=0$, $f^n$ is the identity function which is TL.

Let us now consider an element $a\in \mathsf{R}$ and a positive natural number $n$. If $\delta_{f}(a)\in \mathbb{N}^{+}$, then $\delta_{f^n}(a)\in \mathbb{N}^{+}$, since $1\leq \delta_{f^n}(a)\leq \delta_{f}(a)$. Assume now $\mathsf{ord}_f(a)=2k+1$. This means in particular that $f^{(2k+1)n}(a)=a$. From this it follows that there exists $h|2k+1$ (and in particular such $h$ must be odd) such that $\mathsf{ord}_{f^n}(a)=h$. In particular, if $\mathsf{ord}_f(a)=1$, then $\mathsf{ord}_{f^n}(a)=1$. These observations allow to conclude that if $f$ is TO (resp.\ TL), then $f^n$ is TO (resp.\ TL).
\end{proof}

Unfortunately, the collections of TO and TL partial functions are not closed under composition. Consider e.g.\ the functions $g,h:\{0,1,2,3\}\rightharpoonup \{0,1,2,3\}$ defined as follows.
$$\begin{cases}
g(0)=1\\
g(1)\not\downarrow\\
g(2)\not\downarrow\\
g(3)=2\\
\end{cases}
\qquad\qquad
\begin{cases}
h(0)\not\downarrow\\
h(1)=3\\
h(2)=0\\
h(3)\not\downarrow\\
\end{cases}$$
Both $g$ and $h$ are TL, since $\delta_{g}(0)=\delta_g(3)=2$, $\delta_g(1)=\delta_g(2)=1$, $\delta_{h}(0)=\delta_h(3)=1$ and $\delta_h(1)=\delta_h(2)=2$. 
However one has that $h(g(0))=3$ and $h(g(3))=0$ from which it follows that $\mathsf{ord}_{h\circ g}(0)=2$. Thus $h\circ g$ is not TO.

\begin{proposition}\label{inj} Let $\mathsf{R}$ be a non-empty set.
If $f:\mathsf{R}\rightharpoonup \mathsf{R}$ is a TL partial function, then there exists an injective function $j:\{n\in \mathbb{N}|\,n<\Delta(f)\}\rightarrow \mathsf{R}$ if $\Delta(f)\in \mathbb{N}^{+}$ or if $\Delta(f)=+\infty$. 
\end{proposition}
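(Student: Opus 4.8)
The plan is to exploit the key observation, recorded in the remark following the definition of divergence, that whenever $\delta_f(a)=m<+\infty$ the iterates $f^{0}(a),f^{1}(a),\dots,f^{m-1}(a)$ are $m$ pairwise distinct elements of $\mathsf{R}$. Thus a single element whose orbit diverges after exactly $m$ steps already exhibits a chain of $m$ distinct points in $\mathsf{R}$. I would first isolate this distinctness fact and then split the argument according to whether $\Delta(f)$ is finite or infinite.

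Suppose first that $\Delta(f)=N\in\mathbb{N}^{+}$. If the set $\{\delta_f(a)\mid \delta_f(a)<+\infty\}$ is empty, then by convention $N=1$, the domain $\{n\in\mathbb{N}\mid n<1\}=\{0\}$ is a singleton, and any choice $j(0)\in\mathsf{R}$ works since $\mathsf{R}$ is non-empty. Otherwise the set is a non-empty set of positive integers with finite supremum $N$, so this supremum is attained: there is $a\in\mathsf{R}$ with $\delta_f(a)=N$. I would then define $j(i):=f^{i}(a)$ for $0\leq i<N$; the distinctness fact above guarantees that $j$ is injective on $\{0,\dots,N-1\}$.

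Now suppose $\Delta(f)=+\infty$, so the domain is all of $\mathbb{N}$. Here a single orbit no longer suffices: in a TL function an element either sits on a loop (contributing one point) or diverges after finitely many steps (contributing finitely many points), so no orbit alone produces infinitely many distinct elements. Instead, unboundedness of $\{\delta_f(a)\mid \delta_f(a)<+\infty\}$ gives, for every $N$, some $a$ with $N\leq \delta_f(a)<+\infty$, and hence at least $N$ distinct elements of $\mathsf{R}$ by the distinctness fact. Since $N$ is arbitrary, $\mathsf{R}$ is infinite, and in $\mathbf{ZFC}$ any infinite set admits an injection from $\mathbb{N}$; this injection serves as the required $j$.

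The only delicate point is the infinite case: because TL orbits are either loops or finite diverging chains, one cannot build the injection from a single orbit and must instead combine the information from arbitrarily long chains. Once one records that each finite chain of length $m$ forces $m$ distinct elements, and that $\Delta(f)=+\infty$ supplies such chains of unbounded length, the conclusion that $\mathsf{R}$ is infinite (together with the appeal to the standard set-theoretic fact) is routine.
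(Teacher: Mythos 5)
Your argument is correct. The finite case (including the degenerate case where no element has finite divergence, so $\Delta(f)=1$ by convention) coincides with the paper's proof: the supremum is attained at some $\overline{a}$ and $j(n):=f^{n}(\overline{a})$ is injective by the distinctness of the iterates $f^{0}(\overline{a}),\dots,f^{\Delta(f)-1}(\overline{a})$. In the case $\Delta(f)=+\infty$ you diverge from the paper: you show that $\mathsf{R}$ must be infinite (arbitrarily long chains of pairwise distinct iterates exist because the finite divergences are unbounded) and then invoke the $\mathbf{ZFC}$ fact that every infinite set admits an injection from $\mathbb{N}$, whereas the paper builds $j$ directly by recursion, choosing $j(n+1)$ outside $\{j(0),\dots,j(n)\}$ with $\delta_f(j(n+1))>\delta_f(j(n))$. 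Both routes rest on the same key observation and both are legitimate in $\mathbf{ZFC}$; yours is shorter and offloads the work to a standard set-theoretic fact, while the paper's recursion produces a somewhat more explicit $j$ (with strictly increasing divergences along the way) without appealing to that fact. Since the later use of this proposition (the generated posetal PAS $\mathcal{R}_{f,j}$) only requires $j$ to be injective, nothing is lost by your choice. One small remark: as you implicitly noticed, the TL hypothesis is not actually needed for the distinctness of iterates before divergence, so your aside about loops versus finite chains is illustrative rather than load-bearing.
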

\begin{proof}
We can assume there is at least one element $r\in \mathsf{R}$ with $\delta_f(r)<+\infty$, since if there are no such elements, then $\Delta(f)=1$ by definition and one can just consider a function sending $0$ to some element of $\mathsf{R}$ (which is non-empty by hypothesis).
If $\Delta(f)<+\infty$, then there exists $\overline{a}\in \mathsf{R}$ such that $\delta_f(\overline{a})=\Delta(f)$. In this case we take $j(n):=f^n(\overline{a})$ for every $n<\Delta(f)$. If $\Delta(f)=+\infty$, 
then we take $j(0)$ to be any element of $\mathsf{R}$ and for every $n\in \mathsf{N}$, if we assume to have already defined $j(0),...,j(n)$ we take $j(n+1)$ to be an element of $\mathsf{R}\setminus \{j(k)|\,k=0,...,n\}$ such that $\delta_f(j(n+1))>\delta_f(j(n))$. 
 \end{proof}

\begin{definition} We say that a PAS $\mathcal{R}$ satisfies $\mathbf{TO}$ (resp.\ $\mathbf{TL}$) if the partial function $[r]$ is TO (resp.\ TL) for every $r\in \mathsf{R}$.
\end{definition}

\begin{proposition}\label{Ant->TO}Let $\mathcal{R}$ be a PAS. Then $\mathbf{Ant}\Rightarrow \mathbf{TO}$.
\end{proposition}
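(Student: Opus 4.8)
The plan is to argue by contraposition, using the \emph{full} antisymmetry of $(\mathcal{P}(\mathsf{R}),\vdash)$, which is equivalent to $\mathbf{Ant}$ by Proposition \ref{ant}. Suppose some $[r]$ fails to be TO; then there is a witness $a\in\mathsf{R}$ with $\delta_{[r]}(a)=+\infty$ and $\mathsf{ord}_{[r]}(a)$ not of the form $2k+1$. Write $f:=[r]$ and $a_i:=f^{i}(a)$; since $\delta_f(a)=+\infty$ every $a_i$ is defined and $r\cdot a_i=a_{i+1}$ for all $i$. The assumption on the order then splits into the two cases $\mathsf{ord}_f(a)=+\infty$ and $\mathsf{ord}_f(a)=2m$ with $m\geq 1$, and in each I would produce two distinct subsets $A,B\subseteq\mathsf{R}$ with $A\vdash B$ and $B\vdash A$, contradicting antisymmetry.

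In the infinite-order case I would take $A:=\{a_i\mid i\geq 0\}$ and $B:=\{a_i\mid i\geq 1\}$. The crucial observation is that $a_0\notin B$: if $a_0=a_i$ for some $i\geq 1$ we would get $f^{i}(a_0)=a_0$ and hence $\mathsf{ord}_f(a)\leq i<+\infty$, against the hypothesis; thus $a_0\in A\setminus B$ and $A\neq B$. Now $r$ witnesses both entailments at once: for every $a_i\in A$ one has $r\cdot a_i=a_{i+1}\in B$, so $r\in A\Rightarrow B$ and $A\vdash B$; and for every $a_i\in B$ one has $r\cdot a_i=a_{i+1}\in A$, so $r\in B\Rightarrow A$ and $B\vdash A$. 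This single construction already covers both infinite paths and ``lollipops'' (a tail feeding into a cycle of any length), so no parity considerations are needed here.

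In the even finite-order case the forward orbit is a pure cycle $a_0,\dots,a_{2m-1}$ of distinct elements with $a_{2m}=a_0$, and the shift trick degenerates since $r$ merely permutes the orbit; so I would instead split by parity of the index, setting $A:=\{a_{2i}\mid 0\leq i\leq m-1\}$ and $B:=\{a_{2i+1}\mid 0\leq i\leq m-1\}$. These are non-empty and disjoint, hence distinct. Once more $r$ witnesses both entailments: it sends each even-indexed element to an odd-indexed one (so $A\vdash B$) and each odd-indexed element to an even-indexed one, the wrap-around $r\cdot a_{2m-1}=a_{2m}=a_0\in A$ being consistent precisely because the cycle length $2m$ is even (so $B\vdash A$). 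Antisymmetry then forces $A=B$, a contradiction, and we conclude $\mathbf{TO}$. The delicate point of the whole argument is realizing that one must invoke antisymmetry of $\vdash$ on arbitrary subsets rather than its singleton version $\mathbf{1W}$ (which only rules out $2$-cycles), and that the even-cycle case is exactly where bipartiteness, and hence the parity split, becomes indispensable, mirroring the graph-theoretic reason why odd cycles are harmless for $\mathbf{TO}$.
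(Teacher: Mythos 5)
Your proof is correct and rests on essentially the same ideas as the paper's: in each case you exhibit two distinct subsets of the orbit of $a$ under $[r]$ that mutually entail each other via $r$ itself, using a parity split of the indices for even cycles and a shift-by-one for non-periodic orbits. The only difference is organizational — the paper argues directly with three cases, using the parity split for infinite injective orbits and reserving the shift trick for the preperiodic (``lollipop'') case, whereas you argue by contraposition and handle both of those with the single pair $A$, $A\setminus\{a_0\}$ — a mild streamlining rather than a different method.
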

\begin{proof}
Let $\mathcal{R}$ be a PAS satisfying $\mathbf{Ant}$ and let $r,a\in \mathsf{R}$. There are a priori three possibilities:
\begin{enumerate}
\item $[r]^n(a)\not\downarrow$ for some $n\in \mathbb{N}^{+}$;
\item there exist natural numbers $n<m$ such that $[r]^{n}(a)=[r]^{m}(a)$;
\item $[r]^{n}(a)\downarrow$ for every $n\in \mathbb{N}$ and $[r]^{(-)}(a):\mathbb{N}\rightarrow \mathsf{R}$ is injective.
\end{enumerate}
Assume we are in case $(3)$. Then $\{[r]^{2n}(a)|\,n\in \mathbb{N}\}\dashv\vdash\{[r]^{2n+1}(a)|\,n\in \mathbb{N}\} $, but  $\{[r]^{2n}(a)|\,n\in \mathbb{N}\}\neq\{[r]^{2n+1}(a)|\,n\in \mathbb{N}\} $, resulting in a contradiction with $\mathbf{Ant}$. Thus $(3)$ does not hold.

Assume now we are in case $(2)$. Let $\overline{n}$ be the minimum natural number such that there exists $k\in \mathbb{N}^{+}$ for which $[r]^{\overline{n}}(a)=[r]^{\overline{n}+k}(a)$, and let $\overline{k}$ be the minimum such $k$. If $\overline{n}>0$, then $\{[r]^{\overline{n}-1}(a),[r]^{\overline{n}}(a),...,[r]^{\overline{n}+\overline{k}-1}(a)\}\dashv\vdash  \{[r]^{\overline{n}}(a),...,[r]^{\overline{n}+\overline{k}-1}(a)\}$, but this two sets are different because of the minimality of $\overline{n}$, and this contradicts $\mathbf{Ant}$. Thus $\overline{n}$ must be $0$. In this case, if $\overline{k}$ is even, then $\{[r]^{2j}(a)|\,j\in \mathbb{N}\}\dashv\vdash\{[r]^{2j+1}(a)|\,j\in \mathbb{N}\}$, but these are distinct sets, and we thus contradict $\mathbf{Ant}$. Thus, $\overline{k}$ must be odd. We can hence conclude that $[r]$ is a TO function.
\end{proof}

Using Propositions \ref{car1w} and \ref{ant} we obtain the following:
\begin{proposition}\label{Ant->1W}Let $\mathcal{R}$ be a PAS. Then $\mathbf{Ant}\Rightarrow \mathbf{1W}$.
\end{proposition}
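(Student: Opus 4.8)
The plan is to chain Propositions \ref{ant} and \ref{car1w}, since each of them already translates one of the two notions involved into a statement about the single fiber $(\mathcal{P}(\mathsf{R}),\vdash)$. The whole argument therefore reduces to observing that antisymmetry of a relation on a set restricts to antisymmetry on any subcollection of that set, in particular on the singletons of $\mathsf{R}$.

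First I would assume that $\mathcal{R}$ satisfies $\mathbf{Ant}$, that is, that $\mathcal{R}$ is antisymmetric. By Proposition \ref{ant}, this is equivalent to the antisymmetry of the relation $\vdash$ on the whole powerset $\mathcal{P}(\mathsf{R})$. Next I would note that if $\vdash$ is antisymmetric on all of $\mathcal{P}(\mathsf{R})$, then it is in particular antisymmetric when restricted to the subclass of singletons $\{\{a\}\mid a\in\mathsf{R}\}$: indeed, whenever $\{a\}\vdash\{b\}$ and $\{b\}\vdash\{a\}$ hold, antisymmetry of $\vdash$ forces $\{a\}=\{b\}$, hence $a=b$. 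This is the only genuine content of the proof, and it is immediate.

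Finally I would invoke Proposition \ref{car1w}, which states that $\mathbf{1W}$ holds precisely when $\vdash$ is antisymmetric restricted to singletons of $\mathsf{R}$. Combining the equivalence supplied by Proposition \ref{ant} with the restriction observation above and the characterization of Proposition \ref{car1w} yields $\mathbf{1W}$, completing the implication $\mathbf{Ant}\Rightarrow\mathbf{1W}$.

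I do not expect any real obstacle here: the statement is a straightforward corollary obtained by composing two already-established equivalences, the bridging step being the trivial fact that restricting an antisymmetric relation to a subcollection preserves antisymmetry. The only point worth stating explicitly is that both $\mathbf{Ant}$ and $\mathbf{1W}$ are, via the two cited propositions, conditions purely on the fiber $(\mathcal{P}(\mathsf{R}),\vdash)$, so that no appeal to higher fibers $\pi_{\mathcal{R}}(I)$ for larger index sets $I$ is needed.
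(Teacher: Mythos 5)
Your proposal is correct and matches the paper exactly: the paper derives this proposition by citing Propositions \ref{car1w} and \ref{ant}, and your bridging observation that antisymmetry of $\vdash$ on all of $\mathcal{P}(\mathsf{R})$ restricts to antisymmetry on singletons is precisely the (implicit) content of that derivation.
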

We can summarize the situation as follows:
$$\xymatrix{
		&\mathbf{TL}\wedge \mathbf{Ant}\ar@{=>}[rd]\ar@{=>}[d]		&\\
		&\mathbf{TL}\wedge \mathbf{1W}\ar@{=>}[d]\ar@{=>}[ld]						&\mathbf{Ant}\ar@{=>}[dl]\\
\mathbf{TL}\ar@{=>}[d]		&\mathbf{TO}\wedge \mathbf{1W}\ar@{=>}[ld]\ar@{=>}[rd]				&\\
\mathbf{TO}				&										&\mathbf{1W}\\
}$$

In order to see that this is the best diagram we can draw just consider the following examples:
\begin{enumerate}
\item the following table represents a PAS which satisfies $\mathbf{Ant}$ and $\mathbf{TL}$ (and hence also $\mathbf{TO}$ and $\mathbf{1W}$)
$$\begin{tabular}{|c|c|c|c|}
\hline
$\cdot$	&0	&1	&2	\\
\hline
0		&1	&2	&-	\\
\hline
1		&-	&-	&-	\\
\hline
2		&-	&-	&-	\\
\hline
\end{tabular}$$
\item the following table represents a PAS which satisfies $\mathbf{Ant}$ but not $\mathbf{TL}$ ($\mathbf{TO}$ and $\mathbf{1W}$ are automatically satisfied)
$$\begin{tabular}{|c|c|c|c|}
\hline
$\cdot$	&0	&1	&2	\\
\hline
0		&1	&2	&0	\\
\hline
1		&-	&-	&-	\\
\hline
2		&-	&-	&-	\\
\hline
\end{tabular}$$
\item the following table represents a PAS which satisfies $\mathbf{TL}$ and $\mathbf{1W}$ (and thus automatically $\mathbf{TO}$), but which does not satisfy $\mathbf{Ant}$.
$$\begin{tabular}{|c|c|c|c|c|}
\hline
$\cdot$	&0	&1	&2	&3	\\
\hline
0		&1	&-	&-	&2\\
\hline
1		&-	&3	&0	&-\\
\hline
2		&-	&-	&-	&-\\
\hline
3		&-	&-	&-	&-\\
\hline
\end{tabular}$$
\item the following table represents a PAS which satisfies $\mathbf{TO}$ and $\mathbf{1W}$, but which does not satisfy $\mathbf{Ant}$ nor $\mathbf{TL}$.
$$\begin{tabular}{|c|c|c|c|c|}
\hline
$\cdot$	&0	&1	&2	&3	\\
\hline
0		&1	&-	&-	&2\\
\hline
1		&-	&3	&0	&-\\
\hline
2		&1	&2	&0	&-\\
\hline
3		&-	&-	&-	&-\\
\hline
\end{tabular}$$

\item the following table represents a PAS which satisfies $\mathbf{TO}$, but which does not satisfy $\mathbf{1W}$ (and hence neither $\mathbf{Ant}$) nor $\mathbf{TL}$.
$$\begin{tabular}{|c|c|c|c|}
\hline
$\cdot$	&0	&1	&2	\\
\hline
0		&1	&2	&0	\\
\hline
1		&2	&0	&1	\\
\hline
2		&-	&-	&-	\\
\hline
\end{tabular}$$
\item the following table represents a PAS which satisfies $\mathbf{TL}$ (and thus also $\mathbf{TO}$), but which does not satisfy $\mathbf{1W}$ (and hence neither $\mathbf{Ant}$).
$$\begin{tabular}{|c|c|c|c|}
\hline
$\cdot$	&0	&1	&2	\\
\hline
0		&1	&-	&-	\\
\hline
1		&-	&0	&-	\\
\hline
2		&-	&-	&-	\\
\hline
\end{tabular}$$
\item the following table represents a PAS which satisfies $\mathbf{1W}$, but which does not satisfy $\mathbf{Ant}$ nor $\mathbf{TO}$ (and thus neither $\mathbf{TL}$).
$$\begin{tabular}{|c|c|c|c|c|}
\hline
$\cdot$	&0	&1	&2	&3	\\
\hline
0		&1	&2	&3	&0\\
\hline
1		&-	&-	&-	&-\\
\hline
2		&-	&-	&-	&-\\
\hline
3		&-	&-	&-	&-\\
\hline
\end{tabular}$$
\end{enumerate}
Notice that no one of the examples in the list above satisfies transitivity or totality.
\subsection{Antisymmetry in presence of transitivity}
We assume in this section $\mathcal{R}$ to be transitive. Under this assumption the picture becomes simpler.

\begin{proposition}
Let $\mathcal{R}$ be a transitive PAS. Then $\mathbf{1W}\wedge\mathbf{TO}\Rightarrow \mathbf{TL}$.
\end{proposition}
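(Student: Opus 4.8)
The plan is to argue by contradiction: assume $\mathcal{R}$ is transitive and satisfies $\mathbf{1W}$ and $\mathbf{TO}$, yet some representable function $[r]$ fails to be TL. I would first extract from this failure a genuine finite cycle. Since $[r]$ is not TL, there is an element $a\in\mathsf{R}$ for which the TL condition fails, i.e.\ $\mathsf{ord}_{[r]}(a)\neq 1$ and $\delta_{[r]}(a)=+\infty$. Because $[r]$ is TO and its orbit at $a$ never diverges, the TO condition forces $\mathsf{ord}_{[r]}(a)=2k+1$ to be finite and odd; combined with $\neq 1$ this yields an odd cycle of length $m:=2k+1\geq 3$ through $a$. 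In particular, writing $a_0:=a$ and $a_1:=[r](a)$, these two elements are distinct and $[r]^{m-1}(a_1)=a_0$ with $m-1\geq 2$.

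The second ingredient is a closure lemma provided by transitivity: for every $n\geq 1$ there is a representable function dominating $[r]^n$. I would prove this by induction on $n$ using Proposition \ref{tra}. The base case is $[r]$ itself, and given $t_n$ with $[t_n]\supseteq [r]^n$, applying transitivity to the pair $(r,t_n)$ produces $t_{n+1}$ with $[t_{n+1}]\supseteq [t_n]\circ [r]\supseteq [r]^{n+1}$, where the last inclusion uses monotonicity of composition with respect to graph inclusion of partial functions.

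Finally I would combine the two. Taking $n=m-1$ gives a representable $t$ with $[t]\supseteq [r]^{m-1}$, hence $[t](a_1)=[r]^{m-1}(a_1)=a_0$, the value being defined precisely because we are inside the cycle. Now $[r](a_0)=a_1$ and $[t](a_1)=a_0$, so $\mathbf{1W}$ forces $a_0=a_1$, contradicting the distinctness of the cycle elements. This contradiction shows every $[r]$ must be TL, i.e.\ $\mathbf{TL}$ holds.

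The main obstacle, and really the only place some care is needed, is the closure lemma: Proposition \ref{tra} guarantees only domination of a single composite $[s]\circ[r]$, so one must bootstrap it to arbitrary powers and verify that graph inclusion is preserved under composition on both sides. It is worth noting that the oddness of the cycle length is not essential to the contradiction — any cycle of length $\geq 2$ would suffice — so the role of $\mathbf{TO}$ here is precisely to exclude the infinite injective orbit (the case $\mathsf{ord}_{[r]}(a)=+\infty$ together with $\delta_{[r]}(a)=+\infty$) and thereby ensure that a genuine finite cycle exists for the argument to engage.
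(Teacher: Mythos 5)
Your proof is correct and follows essentially the same route as the paper's: transitivity is used to produce a representable function sending $[r](a)$ back around the cycle to $a$, and $\mathbf{1W}$ then collapses the cycle to a loop, with $\mathbf{TO}$ serving exactly to exclude the divergence-free infinite orbit. The only difference is presentational: the paper collapses the chain $\{a\}\vdash\{[r](a)\}\vdash\dots\vdash\{[r]^{n}(a)\}=\{a\}$ by transitivity of $\vdash$ on singletons, whereas you make the same iteration explicit as an induction on powers of $[r]$ via Proposition \ref{tra}.
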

\begin{proof}
Let us assume $[s]$ to be a TO partial function for every $s$ in a transitive PAS $\mathcal{R}$.  Let $r,a\in \mathsf{R}$ and $n\in \mathbb{N}^{+}$ such that $[r]^{n}(a)=a$. Then we have $\{a\}\vdash \{[r](a)\}\vdash...\vdash \{[r]^{n-1}(a)\}\vdash \{[r]^{n}(a)\}=\{a\}$. From transitivity in $(\mathcal{P}(\mathsf{R}),\vdash)$ we have $\{a\}\vdash \{[r](a)\}$ and $\{[r](a)\}\vdash \{a\}$. Thus, if we assume also $\mathbf{1W}$, we can conclude that $[r](a)=a$. Thus, if $\mathsf{ord}_{[r]}(a)\in \mathbb{N}^{+}$, then $\mathsf{ord}_{[r]}(a)=1$, from which it follows that $[r]$ is a TL partial function.
\end{proof}

\begin{corollary}
Let $\mathcal{R}$ be a transitive PAS. Then $\mathbf{Ant}\Rightarrow \mathbf{TL}$.
\end{corollary}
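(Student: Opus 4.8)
The plan is to obtain $\mathbf{TL}$ by chaining together three results already established, so that no new argument is needed. The key observation is that the two consequences of antisymmetry I need, namely $\mathbf{TO}$ and $\mathbf{1W}$, both hold \emph{unconditionally} for any PAS, while the passage from $\mathbf{TO}\wedge\mathbf{1W}$ to $\mathbf{TL}$ is exactly where the standing transitivity hypothesis of this subsection gets used.

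First I would invoke Proposition~\ref{Ant->TO} to deduce from $\mathbf{Ant}$ that $\mathbf{TO}$ holds, i.e.\ that $[r]$ is a TO partial function for every $r\in\mathsf{R}$. Next I would invoke Proposition~\ref{Ant->1W} to deduce, again from $\mathbf{Ant}$, that $\mathbf{1W}$ holds. Both of these implications are stated for an arbitrary PAS, so they apply in particular to our transitive $\mathcal{R}$ without any extra verification.

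Finally, since $\mathcal{R}$ is assumed transitive throughout this subsection, I would apply the immediately preceding proposition, which asserts $\mathbf{1W}\wedge\mathbf{TO}\Rightarrow\mathbf{TL}$ for a transitive PAS. Combining the two conclusions of the previous paragraph gives $\mathbf{1W}\wedge\mathbf{TO}$, and hence $\mathbf{TL}$ follows at once. There is essentially no obstacle here: the corollary is a direct composition of $\mathbf{Ant}\Rightarrow\mathbf{TO}$, $\mathbf{Ant}\Rightarrow\mathbf{1W}$, and $\mathbf{1W}\wedge\mathbf{TO}\Rightarrow\mathbf{TL}$, the last of which is the only step requiring transitivity. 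If anything merits a word of care, it is only to confirm that the first two implications are genuinely hypothesis-free, which the cited propositions make clear.
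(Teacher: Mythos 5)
Your proposal is correct and follows exactly the paper's own argument: the paper also derives the corollary by combining Propositions \ref{Ant->TO} and \ref{Ant->1W} with the immediately preceding proposition ($\mathbf{1W}\wedge\mathbf{TO}\Rightarrow\mathbf{TL}$ for transitive PASs). Nothing to add.
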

\begin{proof}
This is a consequence of the previous proposition together with Propositions \ref{Ant->TO} and \ref{Ant->1W}.
\end{proof}
\begin{proposition}
Let $\mathcal{R}$ be a transitive PAS. Then $\mathbf{TL}\wedge\mathbf{1W}\Rightarrow \mathbf{Ant}$.
\end{proposition}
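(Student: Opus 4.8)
The plan is to invoke Proposition \ref{ant}, which reduces antisymmetry of $\mathcal{R}$ to antisymmetry of the single fiber $(\mathcal{P}(\mathsf{R}),\vdash)$. So I would fix subsets $A,B\subseteq \mathsf{R}$ with $A\vdash B$ and $B\vdash A$ and aim to prove $A=B$; by the evident symmetry of the hypotheses it suffices to prove $A\subseteq B$. Unwinding the definitions, $A\vdash B$ and $B\vdash A$ provide realizers $r\in A\Rightarrow B$ and $s\in B\Rightarrow A$, so that $r\cdot a\in B$ for every $a\in A$ and $s\cdot b\in A$ for every $b\in B$.

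The central step is to compose these two realizers and exploit $\mathbf{TL}$. Using transitivity in the form of Proposition \ref{tra}, I would pick $t\in \mathsf{R}$ with $[t]\supseteq [s]\circ [r]$, so that $t\cdot a=s\cdot(r\cdot a)$ whenever the right-hand side converges. For $a\in A$ the value $s\cdot(r\cdot a)$ does converge and lands again in $A$ (first $r\cdot a\in B$, then $s\cdot(r\cdot a)\in A$); an immediate induction then shows that the whole forward orbit $[t]^{n}(a)$ stays inside $A$ and is defined for every $n$, whence $\delta_{[t]}(a)=+\infty$. Since $[t]$ is TL, the only remaining possibility is $\mathsf{ord}_{[t]}(a)=1$, i.e.\ $[t](a)=a$, which unpacks to $s\cdot(r\cdot a)=a$.

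Finally I would close the argument with $\mathbf{1W}$. Setting $b:=r\cdot a\in B$, the equalities $r\cdot a=b$ and $s\cdot b=a$ hold simultaneously, so $\mathbf{1W}$ forces $a=b$; hence $a=r\cdot a=b\in B$. As $a\in A$ was arbitrary, $A\subseteq B$, and the symmetric argument (interchanging the roles of $A,B$ and of $r,s$, and composing in the other order via a witness for $[r]\circ [s]$) gives $B\subseteq A$, so $A=B$. I expect the main obstacle to be the orbit-stays-in-$A$ observation of the second paragraph: recognizing that composing the two realizers produces a self-map of $A$ whose total definedness forces $\delta_{[t]}(a)=+\infty$ is exactly what lets $\mathbf{TL}$ collapse the order to $1$; once this is in hand, everything reduces to a direct application of $\mathbf{1W}$.
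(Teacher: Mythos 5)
Your proposal is correct and follows essentially the same route as the paper: reduce via Proposition \ref{ant} to antisymmetry of $(\mathcal{P}(\mathsf{R}),\vdash)$, compose the two realizers through transitivity into a single $t$ whose orbit on any $a\in A$ is everywhere defined, let $\mathbf{TL}$ force $s\cdot(r\cdot a)=a$, and finish with $\mathbf{1W}$ to get $a=r\cdot a\in B$. The only cosmetic difference is that the paper phrases the orbit as an explicit sequence $a_{n+1}:=s\cdot(r\cdot a_n)$ and treats $A=\emptyset$ separately, which your vacuous-inclusion formulation makes unnecessary.
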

\begin{proof}
Suppose $\mathcal{R}$ is transitive and satisfies $\mathbf{1W}$ and $\mathbf{TL}$. Suppose $A$ and $B$ are subsets of $\mathsf{R}$  and $r,s\in \mathsf{R}$ are such that $r\in A\Rightarrow  B$ and $s\in B\Rightarrow A$. If $A=\emptyset$, then $B=\emptyset$ and $A=B$. Thus, we assume that $A\neq \emptyset$. Let $a$ be an arbitrary element of $A$. We can define a sequence $(a_{n})_{n\in \mathbb{N}}$ of elements of $A$ by taking $a_{0}:=a$ and $a_{n+1}:=s\cdot (r\cdot a_n)$ for every $n\in \mathbb{N}$. Since $\mathcal{R}$ is transitive, there exists $t\in \mathsf{R}$ such that $a_{n+1}=[t](a_{n})$ for every $n\in \mathbb{N}$. Since $[t]$ is TL, then $a_{n+1}=a_{n}$ for every $n\in \mathbb{N}$, that is $a_{n}=a$ for every $n\in \mathbb{N}$. In particular, using $\mathbf{1W}$ and the fact that $s\cdot (r\cdot a)=t\cdot a=a$, we can conclude that $a=r\cdot a$. Since $a$ is an arbitrary element of $A$, we can conclude that $A\subseteq B$. The same argument, interchanging the roles of $A$ and $B$ and the roles of $r$ and $s$, provides the inclusion $B\subseteq A$. Thus $A=B$. \end{proof}

Thus we have the following corollary:
\begin{corollary}
The following are equivalent if $\mathcal{R}$ is a transitive PAS:
\begin{enumerate}
\item $\mathcal{R}$ satisfies $\mathbf{Ant}$;
\item $\mathcal{R}$ satisfies $\mathbf{1W}\wedge \mathbf{TL}$;
\item $\mathcal{R}$ satisfies $\mathbf{1W}\wedge \mathbf{TO}$.
\end{enumerate}
\end{corollary}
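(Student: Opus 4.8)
The plan is to prove the three-way equivalence by closing the cycle $(1)\Rightarrow(2)\Rightarrow(3)\Rightarrow(1)$, since every link has already been supplied by a result in this section; the only subtlety is to keep track of which implications are unconditional and which genuinely invoke the standing transitivity hypothesis.

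For $(1)\Rightarrow(2)$ I would assume $\mathbf{Ant}$ and read off $\mathbf{1W}$ from Proposition~\ref{Ant->1W} (valid for any PAS) together with $\mathbf{TL}$ from the preceding corollary, which uses transitivity; this yields $\mathbf{1W}\wedge\mathbf{TL}$. The step $(2)\Rightarrow(3)$ is the cheapest and needs no transitivity: since a loop is an odd cycle of length $1$, every TL partial function is TO, so $\mathbf{TL}$ implies $\mathbf{TO}$ on each $[r]$ and hence $\mathbf{1W}\wedge\mathbf{TL}$ gives $\mathbf{1W}\wedge\mathbf{TO}$. Finally, for $(3)\Rightarrow(1)$ I would assume $\mathbf{1W}\wedge\mathbf{TO}$, upgrade it to $\mathbf{1W}\wedge\mathbf{TL}$ by the proposition asserting $\mathbf{1W}\wedge\mathbf{TO}\Rightarrow\mathbf{TL}$ for transitive PASs, and then apply the proposition $\mathbf{TL}\wedge\mathbf{1W}\Rightarrow\mathbf{Ant}$ (again for transitive PASs) to return to $\mathbf{Ant}$.

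I do not expect a genuine mathematical obstacle here, as all of the real content lives in the two substantive transitive propositions that are already established; the task is organizational, namely selecting a minimal acyclic chain of citations. The only link chaining two nontrivial transitive results in sequence is $(3)\Rightarrow(1)$, so that is where I would focus attention to ensure the citations are not circular, whereas $(1)\Rightarrow(2)$ combines a direct citation with the preceding corollary and $(2)\Rightarrow(3)$ is just the elementary containment of the TL functions inside the TO functions.
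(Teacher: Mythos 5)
Your proof is correct and matches the paper's (implicit) argument: the corollary is stated there without proof precisely because it is the cycle you describe, assembled from Proposition~\ref{Ant->1W}, the corollary $\mathbf{Ant}\Rightarrow\mathbf{TL}$, the containment of TL functions in TO functions, and the two transitive propositions $\mathbf{1W}\wedge\mathbf{TO}\Rightarrow\mathbf{TL}$ and $\mathbf{TL}\wedge\mathbf{1W}\Rightarrow\mathbf{Ant}$. Your attention to which links use transitivity and to the non-circularity of the citations is exactly right.
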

Thus under the assumption of transitivity, the situation is simpler, as represented in the following diagram
$$\xymatrix{
				&\mathbf{Ant}\equiv \mathbf{1W}\wedge \mathbf{TL}\equiv \mathbf{1W}\wedge \mathbf{TO} \ar@{=>}[rd]\ar@{=>}[ld]		&\\
\mathbf{TL}\ar@{=>}[d]		&				&\mathbf{1W}\\
\mathbf{TO}						&			&\\
}$$
And this is the best we can do, as proven by the following list of examples:
\begin{enumerate}
\item the following table represents a {\bf transitive} PAS which satisfies $\mathbf{Ant}$ (and hence also $\mathbf{TL}$, $\mathbf{TO}$ and $\mathbf{1W}$)
$$\begin{tabular}{|c|c|c|c|}
\hline
$\cdot$	&0	&1	&2	\\
\hline
0		&1	&2	&-	\\
\hline
1		&2	&-	&-	\\
\hline
2		&-	&-	&-	\\
\hline
\end{tabular}$$

\item the following table represents a {\bf transitive} PAS which satisfies $\mathbf{TL}$ (and thus $\mathbf{TO}$) and which does not satisfy $\mathbf{1W}$ (and hence neither $\mathbf{Ant}$).
$$\begin{tabular}{|c|c|c|c|}
\hline
$\cdot$	&0	&1	&2	\\
\hline
0		&1	&-	&-	\\
\hline
1		&-	&0	&-	\\
\hline
2		&0	&1	&-	\\
\hline
\end{tabular}$$
\item the following table represents a {\bf transitive} PAS which satisfies $\mathbf{TO}$ but which does not satisfy $\mathbf{1W}$ nor $\mathbf{TL}$ (and hence neither $\mathbf{Ant}$).
$$\begin{tabular}{|c|c|c|c|c|}
\hline
$\cdot$	&0	&1	&2	&-\\
\hline
0		&0	&1	&2	&-\\
\hline
1		&1	&2	&0	&-	\\
\hline
2		&2	&0	&1	&-	\\
\hline
3		&-	&-	&-	&3	\\
\hline
\end{tabular}$$
\item the following table represents a {\bf transitive} PAS which satisfies $\mathbf{1W}$, but which does not satisfy $\mathbf{TO}$ (thus not $\mathbf{TL}$ neither $\mathbf{Ant}$).
$$\begin{tabular}{|c|c|c|c|}
\hline
$\cdot$	&0	&1	\\
\hline
0		&1	&1	\\
\hline
1		&-	&-	\\
\hline
\end{tabular}$$
\end{enumerate}
Notice that no one of the examples in this section is total.

\subsection{Antisymmetry in the total case}
\begin{proposition}\label{tltot}Let $\mathcal{R}$ be a total PAS. Then the following are equivalent:
\begin{enumerate}
\item $\mathbf{TL}$;
\item $\cdot=\pi_2:\mathsf{R}\times \mathsf{R}\rightarrow \mathsf{R}$.
\end{enumerate}
In particular, $\mathbf{TL}\Rightarrow \mathbf{Ant}\wedge \mathbf{Comp}(\wedge \mathbf{Trans})\wedge \mathbf{Id}$.
\end{proposition}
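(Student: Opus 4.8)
The plan is to reduce everything to the single observation, provided by Proposition \ref{totalTOTL}(2), that a \emph{total} TL partial function is forced to be the identity. First I would record that, since $\mathcal{R}$ is total, every represented function $[r]$ is total. Hence $\mathbf{TL}$ holds precisely when each $[r]$ is a total TL function, which by Proposition \ref{totalTOTL}(2) happens exactly when $[r]=\mathsf{id}_{\mathsf{R}}$ for every $r\in \mathsf{R}$, i.e.\ when $r\cdot a=a$ for all $r,a\in \mathsf{R}$. This last condition is literally the assertion $\cdot=\pi_2$, so the equivalence of (1) and (2) falls out immediately in both directions: if $\cdot=\pi_2$ then $[r]=\mathsf{id}_{\mathsf{R}}$, which is TL, and conversely $\mathbf{TL}$ forces $[r]=\mathsf{id}_{\mathsf{R}}$ for all $r$.

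For the ``in particular'' clause I would work under the equivalent hypothesis $\cdot=\pi_2$. Then $\mathbf{Id}$ is immediate, since \emph{every} element $r$ satisfies $[r]=\mathsf{id}_{\mathsf{R}}$ and hence serves as $\mathbf{i}$. Likewise $\mathbf{Comp}$ is immediate: for any $r,s$ one has $[s]\circ[r]=\mathsf{id}_{\mathsf{R}}\circ \mathsf{id}_{\mathsf{R}}=\mathsf{id}_{\mathsf{R}}=[t]$ for any $t\in \mathsf{R}$. Transitivity $\mathbf{Trans}$ then comes for free from Proposition \ref{compant}.

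The only step requiring a short computation is $\mathbf{Ant}$, and here I would make the implication sets explicit. Under $\cdot=\pi_2$ one has $r\cdot a=a$ for all $r,a$, so $r\in A\Rightarrow B$ holds iff $a\in B$ for every $a\in A$; this condition is independent of $r$, giving $A\Rightarrow B=\mathsf{R}$ when $A\subseteq B$ and $A\Rightarrow B=\emptyset$ otherwise. Consequently $A\vdash B$ holds exactly when $A\subseteq B$, so $\vdash$ coincides with the inclusion order on $\mathcal{P}(\mathsf{R})$, which is antisymmetric. By Proposition \ref{ant} this yields $\mathbf{Ant}$ for $\mathcal{R}$. (Alternatively, one could note that $\cdot=\pi_2$ makes $\mathbf{1W}$ hold trivially, since $r\cdot a=b$ already says $a=b$, and then invoke the transitive-case implication $\mathbf{1W}\wedge\mathbf{TL}\Rightarrow\mathbf{Ant}$ together with $\mathbf{Trans}$.)

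I expect no genuine obstacle here: once Proposition \ref{totalTOTL}(2) collapses $\mathbf{TL}$ in the total setting to $\cdot=\pi_2$, every remaining property becomes a one-line verification, the only mild bookkeeping being the explicit description of $A\Rightarrow B$ needed for antisymmetry.
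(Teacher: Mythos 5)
Your proposal is correct and follows essentially the same route as the paper: reduce $\mathbf{TL}$ in the total case to $\cdot=\pi_2$ via Proposition \ref{totalTOTL}(2), observe that then every element represents the identity (giving $\mathbf{Id}$, $\mathbf{Comp}$, and hence $\mathbf{Trans}$), and conclude $\mathbf{Ant}$ by noting that $\vdash$ collapses to $\subseteq$ and invoking Proposition \ref{ant}. Your version merely spells out the computation of $A\Rightarrow B$ more explicitly than the paper does.
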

\begin{proof}
The fact that 1.\ implies 2.\ is a consequence of Proposition \ref{totalTOTL}(2), while trivially 2.\ implies 1.\ The second part follows from the following observation: if $\cdot=\pi_2$, every element of $\mathsf{R}$ represents the identity on $\mathsf{R}$ (thus $\mathbf{Id}$, $\mathbf{Comp}$ and hence $\mathbf{Trans}$ hold); from this it follows that $\vdash$ coincides with $\subseteq$ and thus $\mathbf{Ant}$ holds as a consequence of Proposition \ref{ant}.
\end{proof}
Thus, we have the following picture:
$$\xymatrix{
		&\mathbf{TL}\ar@{=>}[d]		&\\
		&\mathbf{Ant}\ar@{=>}[d]		&\\
		&\mathbf{TO}\wedge \mathbf{1W}\ar@{=>}[rd]\ar@{=>}[ld]		&\\
\mathbf{TO}			&		&\mathbf{1W}\\		
}$$
This is the best we can do, since:
\begin{enumerate}
\item the following table represents a non-trivial {\bf total} PAS which satisfies $\mathbf{TL}$ (and hence $\mathbf{Ant}$, $\mathbf{TO}$, $\mathbf{TL}$ and $\mathbf{1W}$)
$$\begin{tabular}{|c|c|c|}
\hline
$\cdot$	&0	&1	\\
\hline
0		&0	&1	\\
\hline
1		&0	&1	\\
\hline
\end{tabular}$$
\item  the following table represents a {\bf total} PAS which satisfies $\mathbf{Ant}$ (and hence $\mathbf{TO}$ and $\mathbf{1W}$) but does not satisfy $\mathbf{TL}$.
$$\begin{tabular}{|c|c|c|c|}
\hline
$\cdot$	&0	&1	&2	\\
\hline
0		&1	&2	&0\\
\hline
1		&1	&2	&0\\
\hline
2		&1	&2	&0\\
\hline
\end{tabular}$$

\item  the following table represents a {\bf total} PAS which satisfies $\mathbf{TO}$ and $\mathbf{1W}$, but does not satisfy $\mathbf{Ant}$\footnote{$\{0,1\}\vdash\{2,3\}\vdash\{0,1\}$.} (and thus nor $\mathbf{TL}$).
$$\begin{tabular}{|c|c|c|c|c|c|c|c|c|}
\hline
$\cdot$	&0	&1	&2	&3	&4	&5	&6	&7	\\
\hline
0		&2	&3	&4	&5	&0	&1	&6	&7	\\
\hline
1		&2	&3	&4	&5	&0	&1	&6	&7	\\
\hline
2		&2	&3	&4	&5	&0	&1	&6	&7	\\
\hline
3		&2	&3	&4	&5	&0	&1	&6	&7	\\
\hline
4		&6	&7	&1	&0	&4	&5	&3	&2	\\
\hline
5		&6	&7	&1	&0	&4	&5	&3	&2	\\
\hline
6		&6	&7	&1	&0	&4	&5	&3	&2	\\
\hline
7		&6	&7	&1	&0	&4	&5	&3	&2	\\
\hline
\end{tabular}$$

\item  the following table represents a {\bf total} PAS which satisfies $\mathbf{TO}$ but does not satisfy $\mathbf{1W}$ (and thus nor $\mathbf{Ant}$ neither $\mathbf{TL}$)
$$\begin{tabular}{|c|c|c|c|}
\hline
$\cdot$	&0	&1	&2	\\
\hline
0		&1	&2	&0\\
\hline
1		&1	&2	&0\\
\hline
2		&2	&0	&1\\
\hline
\end{tabular}$$
\item  the following table represents a {\bf total} PAS which satisfies $\mathbf{1W}$ but does not satisfy $\mathbf{TO}$ (and thus nor $\mathbf{TL}$ neither $\mathbf{Ant}$).
$$\begin{tabular}{|c|c|c|c|}
\hline
$\cdot$	&0	&1	&2	\\
\hline
0		&1	&1	&2\\
\hline
1		&0	&2	&2\\
\hline
2		&0	&1	&0\\
\hline
\end{tabular}$$ 
\end{enumerate}
Notice that no one of the previous examples is transitive, except the first (which could not be otherwise, see Proposition \ref{tltot}).

Notice that in case $\mathcal{R}$ is {\bf total and transitive}, the situation collapses to the following:
$$\xymatrix{	&\mathbf{Ant}\ar@{=>}[rd]	\ar@{=>}[ld]	\\
\mathbf{TO}	&	&\mathbf{1W}}$$
Indeed, under these hypotheses, $\mathbf{Ant}\Leftrightarrow \mathbf{TL}$.
Antisymmetry is equivalent to the operation $\cdot$ to be the second projection, while an example of a transitive total PAS satisfying $\mathbf{1W}$ but not satisfying $\mathbf{TO}$ (and thus neither $\mathbf{Ant}$) is described by the following table
$$\begin{tabular}{|c|c|c|}
\hline
$\cdot$	&0	&1	\\
\hline
0		&1	&1	\\
\hline
1		&1	&1	\\
\hline
\end{tabular}$$
and an example of a transitive total PAS satisfying $\mathbf{TO}$ but not satisfying $\mathbf{1W}$ (and thus neither $\mathbf{Ant}$) is described by the following table:
$$\begin{tabular}{|c|c|c|c|}
\hline
$\cdot$	&0	&1	&2	\\
\hline
0		&0	&1	&2	\\
\hline
1		&1	&2	&0\\
\hline
2		&2	&0	&1\\
\hline
\end{tabular}$$
\subsection{Posetal partial applicative structures}
\begin{definition} A PAS $\mathcal{R}$ is \emph{posetal} if $\pi_{\mathsf{R}}:\mathbf{Set}^{op}\rightarrow \mathbf{Bin}$ factorizes through the inclusion functor of the category $\mathbf{Pos}$ of posets into $\mathbf{Bin}$, that is if $\pi_{\mathcal{R}}(I)$ is a poset for every set $I$.
\end{definition}

Glueing together the results in the previous sections we have the following characterization of posetal PASs.
\begin{theorem}\label{carpos} The following are equivalent for a PAS $\mathcal{R}$:
\begin{enumerate}
\item $\mathcal{R}$ is posetal;
\item $\mathcal{R}$ is reflexive and transitive, and $\vdash$ is antisymmetric;
\item $\mathcal{R}$ is reflexive and transitive, and it satisfies $\mathbf{TL}$ and $\mathbf{1W}$.
\end{enumerate}
\end{theorem}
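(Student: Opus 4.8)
The plan is to prove the two equivalences $(1)\Leftrightarrow(2)$ and $(2)\Leftrightarrow(3)$ by assembling the characterizations already established, since no genuinely new argument is required: this theorem is a synthesis of the preceding sections.

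First I would unfold the definition of \emph{posetal}. By definition $\mathcal{R}$ is posetal precisely when each fiber $\pi_{\mathcal{R}}(I)$ is a poset, that is, when $\vdash_{I}$ is reflexive, transitive and antisymmetric for every set $I$. By the conventions fixed just before Proposition \ref{ref} (an indexed relation \emph{has} a property when all its fibers do), this is the same as saying that $\mathcal{R}$ is simultaneously reflexive, transitive and antisymmetric. To obtain $(1)\Leftrightarrow(2)$ it then suffices to replace the antisymmetry of all fibers by the antisymmetry of the single relation $\vdash$ on $\mathcal{P}(\mathsf{R})$: this is exactly the content of Proposition \ref{ant}, which shows that $\mathcal{R}$ is antisymmetric if and only if $(\mathcal{P}(\mathsf{R}),\vdash)$ is antisymmetric. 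Hence posetality is equivalent to being reflexive and transitive with $\vdash$ antisymmetric, which is statement $(2)$.

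For $(2)\Leftrightarrow(3)$ I would work under the common hypothesis that $\mathcal{R}$ is reflexive and transitive, so that only the third clause varies. By Proposition \ref{ant} the antisymmetry of $\vdash$ is equivalent to the property $\mathbf{Ant}$; and since transitivity is in force, the corollary on antisymmetry in the presence of transitivity (proved earlier in this section) gives, for transitive $\mathcal{R}$, the chain $\mathbf{Ant}\Leftrightarrow\mathbf{1W}\wedge\mathbf{TL}\Leftrightarrow\mathbf{1W}\wedge\mathbf{TO}$. Composing these equivalences shows that, for a reflexive and transitive $\mathcal{R}$, the antisymmetry of $\vdash$ holds if and only if $\mathbf{TL}$ and $\mathbf{1W}$ hold, which is precisely the difference between $(2)$ and $(3)$.

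I do not expect a real obstacle, as all the work has been done; the only point deserving care is the logical bookkeeping. In particular, the transitive-case corollary may be invoked only because transitivity is assumed in both $(2)$ and $(3)$, so the reduction $\mathbf{Ant}\Leftrightarrow\mathbf{1W}\wedge\mathbf{TL}$ is legitimate exactly within the scope of that shared hypothesis. One should also keep in mind that it is Proposition \ref{ant}, rather than any fiberwise computation, that permits antisymmetry to be tested on the base $\mathcal{P}(\mathsf{R})$ alone, which is what makes the passage between the indexed formulation of $(1)$ and the single-relation formulation of $(2)$ clean.
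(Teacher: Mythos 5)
Your proposal is correct and follows exactly the route the paper intends: the theorem is stated there with no separate proof beyond the remark that it is obtained by ``glueing together the results in the previous sections,'' namely the definition of posetal, Proposition \ref{ant}, and the corollary $\mathbf{Ant}\Leftrightarrow\mathbf{1W}\wedge\mathbf{TL}\Leftrightarrow\mathbf{1W}\wedge\mathbf{TO}$ for transitive PASs. Your bookkeeping, including the observation that the transitive-case corollary is applicable only within the shared hypothesis of $(2)$ and $(3)$, is accurate and complete.
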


\begin{example}
A class of posetal partial applicative structures can be obtained as follows. Let $f:\mathsf{R}\rightharpoonup \mathsf{R}$ be a TL partial function 
and let $j$ be an injective function from $\{n\in \mathbb{N}|\,n<\Delta(f)\}$ to $\mathsf{R}$ (which always exists if $\Delta(f)\in \mathbb{N}^{+}$ and which exists if $\Delta(f)=+\infty$, 
see Proposition \ref{inj}). 
We define a PAS $\mathcal{R}_{f,j}$ as $(\mathsf{R},\cdot_{f,j})$ where
$$\begin{cases}
j(n)\cdot_{f,j}a\simeq f^{n}(a) \\
b\cdot_{f,j}a\not \downarrow \textrm{ otherwise }\\ 
\end{cases}$$
Such a PAS is reflexive, since $0<\Delta(f)$ and $j(0)\cdot_{f,j}a=f^0(a)=a$ for every $a\in \mathsf{R}$. If $n+m<\Delta(f)$, then $j(n+m)\cdot_{f,j}a\simeq j(n)\cdot_{f,g}(j(m)\cdot a)$ for every $a\in \mathsf{R}$. If $n+m\geq\Delta(f)$, then $j(n)\cdot_{f,g}(j(m)\cdot a)\not\downarrow$ for every $a\in \mathsf{R}$. This is enough to conclude that $\mathcal{R}_{f,j}$ is transitive. Moreover, $\mathbf{1W}$ is clearly satisfied, because $f$ is TL and $\mathbf{TL}$ is satisfied thanks to Proposition \ref{comptl} since $[j(n)]=f^{n}$ for every $n<\Delta(f)$ and $[r]=\emptyset$ otherwise. Thus $\mathcal{R}_{f,j}$ is a posetal PAS, which we will call the posetal PAS \emph{generated} by $f$ through $j$.
\end{example}
\section{Elementary properties in posetal PASs}
\subsection{General properties}
By collecting all the observations about trivial PASs in the previous sections one can conclude that 
\begin{proposition}
Each trivial PAS is a posetal PAS which satisfies all properties we have considered until now.
\end{proposition}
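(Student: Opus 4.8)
The plan is to read this statement as a summary corollary: I fix a trivial PAS $\mathcal{R}$ with unique element $a$ satisfying $a\cdot a = a$, and check each named property by appealing to the results already assembled, exploiting that in a one-element structure every applicative term evaluates to $a$. I would organize the verification into the order-theoretic properties (culminating in posetality) and the algebraic/computational ones, and I would flag symmetry as the single property that must be read as excluded.

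For the order-theoretic side I would proceed as follows. Proposition \ref{basic}(1) already supplies $\mathbf{Id}$, $\mathbf{Const}$, $\mathbf{Comp}$, $\mathbf{Ext}$ and $\mathbf{1W}$. Reflexivity then follows from $\mathbf{Id}$ via Proposition \ref{ref}, transitivity from $\mathbf{Comp}$ via Proposition \ref{compant}, and the existence of a maximum from reflexivity via Corollary \ref{maxnomax}; a maximum yields right bounds by Proposition \ref{max}, while a minimum and left bounds are present in every PAS by the propositions following Corollary \ref{maxnomax}. Since $a\cdot a = a$ gives $[a] = \mathsf{id}_{\mathsf{R}}$, Proposition \ref{totalTOTL}(2) shows $[a]$ is TL, so $\mathbf{TL}$ (hence $\mathbf{TO}$) holds. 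With reflexivity, transitivity, $\mathbf{TL}$ and $\mathbf{1W}$ in hand, Theorem \ref{carpos} delivers that $\mathcal{R}$ is posetal and in particular antisymmetric, so $\mathbf{Ant}$ and $\mathbf{Trans}$ hold.

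For the algebraic/computational side, $\mathbf{Finite}$ and $\mathbf{Total}$ are immediate from $|\mathsf{R}| = 1$ and $a\cdot a\downarrow$; $\mathbf{TM}$ and $\mathbf{1}\textrm{-}\mathbf{Total}$ follow from $\mathbf{Const}$ by Proposition \ref{basic}(2) and \ref{basic}(3); $\mathbf{ID}_R$ holds by taking $\mathbf{j} := a$; and $\mathbf{Assoc}$ together with $\mathbf{Ab}$ hold because both sides of each required identity evaluate to $a$, i.e.\ $\mathcal{R}$ is an abelian monoid as already noted. Finally $\mathbf{K}$ and $\mathbf{S}$ hold because the trivial PAS is a PCA, as observed after the definition of PCA; concretely one takes $\mathbf{k}:=a$ and $\mathbf{s}:=a$ and checks that each defining equation collapses to $a = a$.

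The point requiring care is not any individual verification but the scope of the phrase \emph{all properties}. Symmetry cannot be included: by Proposition \ref{sym} no PAS whatsoever is symmetric, the trivial one included (indeed $\emptyset\vdash\{a\}$ while $\{a\}\not\vdash\emptyset$). I would therefore state explicitly that the claim ranges over the named algebraic and computational properties together with the attainable relational ones (reflexivity, transitivity, antisymmetry, maximum, minimum, right and left bounds), and confirm by inspection that this list is exhaustive.
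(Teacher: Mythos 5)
Your proposal is correct and follows essentially the same route as the paper, which simply collects the observations about trivial PASs established in the preceding sections; you make that collection explicit and each citation checks out. Your remark that symmetry must be read as excluded from ``all properties'' (by Proposition~\ref{sym}) is a sensible clarification of scope rather than a divergence in method.
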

Moreover, total posetal PASs turn out to be very easy to characterize as a consequence of Proposition \ref{tltot} and Theorem \ref{carpos}. 
\begin{proposition}\label{totalPAS}
A total PAS $\mathcal{R}$ is posetal if and only if $\cdot=\pi_2$. 
\end{proposition}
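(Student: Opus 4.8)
The statement to prove is Proposition \ref{totalPAS}: a total PAS $\mathcal{R}$ is posetal if and only if $\cdot=\pi_2$.

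The plan is to exploit the characterization of posetal PASs given in Theorem \ref{carpos} together with the total-case analysis of antisymmetry in Proposition \ref{tltot}. The key observation is that for total PASs, the condition $\mathbf{TL}$ collapses to the very rigid requirement that $\cdot=\pi_2$, and that this single condition already delivers reflexivity, transitivity and antisymmetry all at once.

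First I would prove the easy direction. Suppose $\cdot=\pi_2$. By the second part of Proposition \ref{tltot}, when $\cdot=\pi_2$ every element of $\mathsf{R}$ represents $\mathsf{id}_{\mathsf{R}}$, so $\mathbf{Id}$ holds (hence $\mathcal{R}$ is reflexive by Proposition \ref{ref}) and $\mathbf{Comp}$ holds (hence $\mathbf{Trans}$ by Proposition \ref{compant}, so $\mathcal{R}$ is transitive by Proposition \ref{tra}). Moreover Proposition \ref{tltot} gives $\mathbf{Ant}$ directly, which by Proposition \ref{ant} says exactly that $\vdash$ is antisymmetric. Thus condition 2 of Theorem \ref{carpos} is satisfied and $\mathcal{R}$ is posetal.

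For the converse, suppose $\mathcal{R}$ is total and posetal. By Theorem \ref{carpos}(3) a posetal PAS is reflexive, transitive and satisfies $\mathbf{TL}$. Since $\mathcal{R}$ is total, every $[r]$ is a total function, so I can apply Proposition \ref{totalTOTL}(2): a total function $f$ is TL if and only if $f=\mathsf{id}_{\mathsf{R}}$. Hence $[r]=\mathsf{id}_{\mathsf{R}}$ for every $r\in \mathsf{R}$, which means $r\cdot a = a$ for all $r,a\in \mathsf{R}$, i.e.\ $\cdot=\pi_2$. This completes both directions.

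I expect no serious obstacle here: the proposition is essentially a corollary, and the real work was already done in Proposition \ref{tltot} and Theorem \ref{carpos}. The only point requiring a little care is making sure that in the total setting $\mathbf{TL}$ forces $\cdot=\pi_2$ uniformly over all representatives $r$ (which is exactly what Proposition \ref{totalTOTL}(2) guarantees, applied to each $[r]$), rather than for a single element; once that uniformity is noted, the identification $\cdot=\pi_2$ is immediate.
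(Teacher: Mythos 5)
Your proof is correct and follows essentially the same route as the paper, which derives this proposition directly from Proposition \ref{tltot} and Theorem \ref{carpos}; your only addition is to spell out the details (via Propositions \ref{ref}, \ref{compant}, \ref{ant} and \ref{totalTOTL}(2)) that the paper leaves implicit. No gaps.
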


\subsection{Representable endofunctions}
We present here facts concerning properties in Section $2.1.$ in the posetal case.
\begin{proposition}\label{endpos}
Let $\mathcal{R}$ be a posetal PAS. Then
\begin{enumerate}
\item $\mathbf{1W}$ holds;
\item $\mathbf{Id}$ holds;
\item {\bf 1-Total} holds;
\item $\mathbf{TM}\Leftrightarrow \mathbf{Trivial}$;
\item $\mathbf{Const}\Leftrightarrow \mathbf{Trivial}$;
\item $\mathbf{Ext}\wedge \mathbf{Total}\Leftrightarrow \mathbf{Trivial}$;
\item $\mathbf{Total}\Rightarrow \mathbf{Comp}$.
\end{enumerate}
\end{proposition}
\begin{proof}
Items 1.\ and 2.\ follow from Theorem \ref{carpos} and Proposition \ref{ref}. Item 3.\ is a consequence of 2. Items 4.\ and 5.\ follow from Proposition \ref{basic}(2,5) and Theorem \ref{carpos}. Item 6.\ follows by Proposition \ref{totalPAS}, since $(\mathsf{R},\pi_2)$ is extensional if and only if $\mathsf{R}$ has only one element. Item 7.\ is a trivial consequence of Proposition \ref{totalPAS}.

\end{proof}

%
\subsection{Standard algebraic properties}
We present here facts concerning properties in Section $2.1.$ in the posetal case.
\begin{proposition}\label{algpos} Let $\mathcal{R}$ be a posetal PAS, then 
\begin{enumerate}
\item $\mathbf{Ab}\Rightarrow \mathbf{ID}_{R}$;
\item $\mathbf{Total}\wedge \mathbf{ID}_{R}\Leftrightarrow  \mathbf{Trivial}$;
\item $\mathbf{Total}\wedge \mathbf{Ab}\Leftrightarrow \mathbf{Trivial}$;
\item $\mathbf{Total}\Rightarrow \mathbf{Assoc}$;
\item $\mathbf{ID}_{R}\wedge \mathbf{Comp}\Leftrightarrow \mathbf{Trivial}$
\item $\mathbf{Ab}\wedge \mathbf{Comp}\Leftrightarrow \mathbf{Trivial}$
\end{enumerate}
\end{proposition}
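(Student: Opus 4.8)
The plan is to record first the facts that posetality puts at our disposal. Since $\mathcal{R}$ is posetal, Theorem \ref{carpos} and Proposition \ref{endpos} give us that $\mathbf{Id}$ holds (so there is a left identity $\mathbf{i}$ with $\mathbf{i}\cdot a=a$ for all $a$), that $\mathbf{1W}$ and $\mathbf{TL}$ hold, and that $\mathcal{R}$ is reflexive and transitive. I will also use Proposition \ref{interact}(1), namely $\mathbf{ID}_{R}\Rightarrow\mathbf{Ext}$; Proposition \ref{totalPAS}, that a total posetal PAS has $\cdot=\pi_2$; and Proposition \ref{endpos}(6), that $\mathbf{Ext}\wedge\mathbf{Total}\Leftrightarrow\mathbf{Trivial}$. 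For every biconditional, the direction $\mathbf{Trivial}\Rightarrow X$ is immediate because a trivial PAS satisfies all properties considered, so in each case only the forward implication requires work.

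For item 1, I would take the left identity $\mathbf{i}$ and apply $\mathbf{Ab}$ to compute $a\cdot\mathbf{i}\simeq\mathbf{i}\cdot a=a$ for every $a$, so $\mathbf{i}$ is also a right identity and $\mathbf{ID}_{R}$ holds with $\mathbf{j}=\mathbf{i}$. Item 4 is shorter still: by Proposition \ref{totalPAS} totality forces $a\cdot b=b$, and the second projection is associative since both $a\cdot(b\cdot c)$ and $(a\cdot b)\cdot c$ reduce to $c$. For item 2 I would invoke Proposition \ref{interact}(1) to get $\mathbf{Ext}$ from $\mathbf{ID}_{R}$, so that $\mathbf{Total}\wedge\mathbf{ID}_{R}\Rightarrow\mathbf{Total}\wedge\mathbf{Ext}\Leftrightarrow\mathbf{Trivial}$ by Proposition \ref{endpos}(6). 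Item 3 then follows by chaining items 1 and 2: $\mathbf{Total}\wedge\mathbf{Ab}\Rightarrow\mathbf{Total}\wedge\mathbf{ID}_{R}\Rightarrow\mathbf{Trivial}$.

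The heart of the argument, and the step I expect to be the main obstacle, is item 5, where a genuinely new idea is needed to promote $\mathbf{ID}_{R}\wedge\mathbf{Comp}$ to totality. Let $\mathbf{j}$ be the right identity from $\mathbf{ID}_{R}$. Given arbitrary $a,b\in\mathsf{R}$, $\mathbf{Comp}$ supplies $t$ with $a\cdot(b\cdot x)\simeq t\cdot x$ for every $x$; evaluating this equivalence at $x=\mathbf{j}$ and using $b\cdot\mathbf{j}=b$ together with $t\cdot\mathbf{j}=t$ yields $a\cdot b\simeq t$, and since $t$ is an element of $\mathsf{R}$ this shows $a\cdot b\downarrow$. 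Hence $\mathbf{ID}_{R}\wedge\mathbf{Comp}\Rightarrow\mathbf{Total}$, and item 2 closes the case: $\mathbf{ID}_{R}\wedge\mathbf{Comp}\Rightarrow\mathbf{Total}\wedge\mathbf{ID}_{R}\Leftrightarrow\mathbf{Trivial}$. Finally item 6 is a corollary of items 1 and 5, since $\mathbf{Ab}\wedge\mathbf{Comp}\Rightarrow\mathbf{ID}_{R}\wedge\mathbf{Comp}\Leftrightarrow\mathbf{Trivial}$.

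The only recurring subtlety is the careful treatment of definedness in the partial setting, keeping the weak equivalence $\simeq$ until a value is known to exist; but the evaluation-at-$\mathbf{j}$ trick in item 5 is precisely what turns this subtlety into leverage, forcing totality rather than merely a partial coincidence. With that trick in hand, items 2, 3 and 6 become bookkeeping over the already established implications, and the whole proposition reduces to the single nontrivial observation that a right identity together with representable composition leaves no room for undefinedness.
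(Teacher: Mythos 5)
Your proof is correct, and for the core item it takes a genuinely different route from the paper. For items 1, 3, 4 and 6 you do essentially what the paper does (item 2 differs only cosmetically: the paper computes directly $a=a\cdot\mathbf{j}=\mathbf{j}$ from $\cdot=\pi_2$, while you route through $\mathbf{ID}_R\Rightarrow\mathbf{Ext}$ and $\mathbf{Ext}\wedge\mathbf{Total}\Leftrightarrow\mathbf{Trivial}$; both are fine). The real divergence is item 5. The paper argues by contradiction using the $\mathbf{TL}$ property of posetal PASs: if $a\neq\mathbf{j}$ then $[a](\mathbf{j})=a\neq\mathbf{j}$, so since $[a]$ is TL some iterate $[a]^n(\mathbf{j})$ diverges, and $\mathbf{Comp}$ produces an $r$ with $[r]=[a]^n$, whence $r\cdot\mathbf{j}\not\downarrow$, contradicting $\mathbf{ID}_R$. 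You instead prove the cleaner and more general fact that $\mathbf{ID}_R\wedge\mathbf{Comp}\Rightarrow\mathbf{Total}$ in \emph{any} PAS, by evaluating the $\mathbf{Comp}$ equivalence $a\cdot(b\cdot x)\simeq t\cdot x$ at $x=\mathbf{j}$: the right-hand side $t\cdot\mathbf{j}=t$ is defined, so $a\cdot b\downarrow$. You then reduce to item 2. Your argument buys a statement independent of posetality (totality follows from a right identity plus representable composition alone) and avoids any appeal to $\mathbf{TL}$, while the paper's version stays closer to the structural analysis of representable functions developed in Section 5. Both are valid and non-circular; yours is arguably the more economical of the two.
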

\begin{proof}
By Proposition\ \ref{ref}, reflexivity amounts to require the existence of a left identity for $\cdot$; this, combined with $\mathbf{Ab}$, results in the existence of a right identity. Since every posetal PAS is reflexive we can conclude that 1.\ holds. In order to show 2.\ it is sufficient to notice that if $\mathbf{j}$ is a right identity and $\mathcal{R}$ is total posetal, then $a=a\cdot \mathbf{j}=\mathbf{j}$ for every $a\in \mathsf{R}$. Item 3.\ follows from items 1.\ and 2.\
Item 4.\ follows from the fact that if $\mathcal{R}$ is total posetal then $(a\cdot b)\cdot c=c=a\cdot (b\cdot c)$ for every $a,b,c\in \mathsf{R}$.
In order to prove item 5.\ assume $\mathcal{R}$ to safisfy $\mathbf{ID}_R$ and $\mathbf{Comp}$. Assume there exists $a\neq \mathbf{j}$. Since $a\cdot \mathbf{j}=a\neq \mathbf{j}$, then there exists $n$ such that $[a]^{n}(\mathbf{j})\not\downarrow$ since $[a]$ is TL. But we know that by $\mathbf{Comp}$ there exists $r\in \mathsf{R}$ such that $[r]=[a]^n$. For such $r$, we have $r\cdot \mathbf{j}\not\downarrow$, which contradicts the fact that $r\cdot \mathbf{j}=\mathbf{j}$. Thus $a=\mathbf{j}$ for every $a$, from which it follows that $\mathcal{R}$ is trivial. Item 6.\ follows from items 5.\ and 1.\
\end{proof}

%
%

\subsection{Combinators}
\begin{proposition}
Let $\mathcal{R}$ be a posetal PAS. Then:
\begin{enumerate}
\item $\mathbf{K}\Leftrightarrow \mathbf{Trivial}$;
\item $\mathbf{S}\Leftrightarrow \mathbf{Total}$.
\end{enumerate}
\end{proposition}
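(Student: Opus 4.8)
The plan is to treat the two biconditionals separately and, in each case, to reduce everything to facts already available for posetal PASs: the characterization of Theorem \ref{carpos} (every posetal PAS is reflexive and transitive and satisfies $\mathbf{TL}$ and $\mathbf{1W}$), together with the total case recorded in Proposition \ref{totalPAS}.

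For item 1, the implication $\mathbf{Trivial}\Rightarrow\mathbf{K}$ is immediate, since a trivial PAS satisfies every property considered so far. For the converse I would simply chain two earlier results: Proposition \ref{Konst} gives $\mathbf{K}\Rightarrow\mathbf{Const}$, and Proposition \ref{endpos}(5) gives, in the posetal setting, $\mathbf{Const}\Leftrightarrow\mathbf{Trivial}$. Hence $\mathbf{K}\Rightarrow\mathbf{Const}\Rightarrow\mathbf{Trivial}$, which settles item 1. For item 2, the direction $\mathbf{Total}\Rightarrow\mathbf{S}$ follows at once from Proposition \ref{totalPAS}: a total posetal PAS has $\cdot=\pi_2$, so $x\cdot y=y$ throughout. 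Then for any $\mathbf{s}\in\mathsf{R}$ we have $\mathbf{s}\cdot a\cdot b=b$ (so it is defined), while both sides of the $\mathbf{S}$-equation collapse, since $\mathbf{s}\cdot a\cdot b\cdot c=c$ and $a\cdot c\cdot(b\cdot c)=c$; thus $\mathbf{S}$ holds with $\mathbf{s}$ taken arbitrarily.

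The substantive direction is $\mathbf{S}\Rightarrow\mathbf{Total}$, and this is where the argument does real work. Fix a witness $\mathbf{s}$ for $\mathbf{S}$. The clause stating that $\mathbf{s}\cdot a\cdot b\downarrow$ for all $a,b$ says exactly that, for each $a$, the represented function $[\mathbf{s}\cdot a]$ is total. Since $\mathcal{R}$ is posetal it satisfies $\mathbf{TL}$, so $[\mathbf{s}\cdot a]$ is a total TL function; by Proposition \ref{totalTOTL}(2) such a function is the identity, whence $\mathbf{s}\cdot a\cdot b=b$ for all $a,b$. Feeding this back into the defining equation $\mathbf{s}\cdot a\cdot b\cdot c\simeq a\cdot c\cdot(b\cdot c)$ collapses the left-hand side to $b\cdot c$, giving $b\cdot c\simeq a\cdot c\cdot(b\cdot c)$ for all $a,b,c$. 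Finally I would specialize $b$ to the identity element $\mathbf{i}$ furnished by reflexivity (Proposition \ref{ref}): then $b\cdot c=\mathbf{i}\cdot c=c$ is always defined, so the equation forces $a\cdot c\cdot c\simeq c$, and hence the term $a\cdot c\cdot c$ is defined for every $a,c$.

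The one delicate point, and the step I expect to be the main obstacle, is the bookkeeping of $\downarrow$ versus $\simeq$ in this last move: one must observe that since the left side $\mathbf{i}\cdot c=c$ is always defined, the relation $\simeq$ upgrades to genuine definedness of $a\cdot c\cdot c$, and therefore of its subterm $a\cdot c$, for every pair $a,c$. That is precisely totality, completing the implication and the proposition.
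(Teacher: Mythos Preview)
Your proof is correct. Item 1 is handled exactly as in the paper, and for item 2 the direction $\mathbf{Total}\Rightarrow\mathbf{S}$ is also identical.

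For $\mathbf{S}\Rightarrow\mathbf{Total}$ your argument works but takes a detour compared to the paper. You apply the ``total TL implies identity'' principle to the functions $[\mathbf{s}\cdot a]$, obtaining $\mathbf{s}\cdot a\cdot b=b$, and then feed this into the $\mathbf{S}$-equation together with the left identity $\mathbf{i}$ to extract $a\cdot c\downarrow$. The paper instead applies the very same principle one level higher, to $[\mathbf{s}]$ itself: since $\mathbf{s}\cdot a\cdot b\downarrow$ forces $\mathbf{s}\cdot a\downarrow$ for every $a$, the function $[\mathbf{s}]$ is already total, hence the identity by Proposition~\ref{totalTOTL}(2), so $\mathbf{s}\cdot a=a$ and therefore $a\cdot b=\mathbf{s}\cdot a\cdot b\downarrow$ immediately. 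This avoids both the $\mathbf{S}$-equation and the appeal to $\mathbf{i}$. Your longer path is perfectly valid, but you might note that the step ``$[\mathbf{s}\cdot a]$ is total'' already presupposes that $[\mathbf{s}]$ is total, so the shorter route was available to you from the start.
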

\begin{proof} Assume $\mathcal{R}$ to be posetal. Since $\mathbf{K}\Rightarrow \mathbf{Const}$, by Proposition \ref{endpos}(5) $\mathbf{K}\Rightarrow \mathbf{Trivial}$.
Let us now assume $\mathcal{R}$ to satisfy $\mathbf{S}$, then $\mathbf{s}\cdot a\downarrow$ for every $a\in \mathsf{R}$, from which, by Proposition \ref{totalTOTL}, it follows that $\mathbf{s}\cdot a=a$ for every $a\in \mathsf{R}$. From this it follows that $a\cdot b\simeq \mathbf{s}\cdot a\cdot b \downarrow$, from which totality follows. Conversely, if $\mathcal{R}$ is total, then by Proposition \ref{totalPAS}, for every $s,a,b,c\in \mathcal{R}$, $s\cdot a\cdot b\cdot c=c=a\cdot c\cdot (b\cdot c)$.
\end{proof}
\begin{corollary}
A PCA $\mathcal{R}$ is posetal if and only if it is trivial.
\end{corollary}

\subsection{The general picture}
Thus, the general picture says that only seven properties are not decided (or not collapsing in triviality) when we assume $\mathcal{R}$ to be a posetal PAS. One is finiteness, indeed we can have finite or infinite posetal PASs. Then we have a group of properties with the relations shown in the diagram which cannot be reversed.
$$\xymatrix{
\mathbf{Ab}\ar@{=>}[d]		\\
\mathbf{ID}_{R}\ar@{=>}[d]		\\
\mathbf{Ext}				\\
}$$
Indeed:
\begin{enumerate}
\item 
$\begin{tabular}{|c|c|c|c|}
\hline
$\cdot$	&0	&1	&2\\
\hline
0	&0	&1	&2\\
\hline
1	&1	&-	&2\\
\hline
2	&2	&2	&-\\
\hline
\end{tabular}$
represents a non-trivial posetal PAS satisfying $\mathbf{Ab}$;
\item
$\begin{tabular}{|c|c|c|c|}
\hline
$\cdot$	&0	&1	&2\\
\hline
0&0	&1	&2\\
\hline
1&1	&-	&1\\
\hline
2&2	&-	&-\\
\hline
\end{tabular}$
represents a posetal PAS satisfying $\mathbf{ID}_R$ but not $\mathbf{Ab}$;
\item
$\begin{tabular}{|c|c|c|c|}
\hline
$\cdot$	&0	&1	&2\\
\hline
0&0	&1	&2\\
\hline
1&2	&-	&-\\
\hline
2&1	&-	&-\\
\hline
\end{tabular}$
represents a posetal PAS satisfying $\mathbf{Ext}$ but not $\mathbf{ID}_R$;
\item
$\begin{tabular}{|c|c|c|c|}
\hline
$\cdot$	&0	&1	&2\\
\hline
0&0	&1	&2\\
\hline
1&0	&1	&2\\
\hline
2&0	&1	&2\\
\hline
\end{tabular}$
represents a posetal PAS not satisfying $\mathbf{Ext}$.
\end{enumerate}
Moreover, we have also the following group of properties in which converse implications do not hold and no other implication can be added
$$\xymatrix{
		&\mathbf{Total}\ar@{=>}[ld]\ar@{=>}[rd]		&\\
\mathbf{Assoc}		&		&\mathbf{Comp	}	&\\
		&		&	&\\
}$$
since 
\begin{enumerate}
\item the following table represents a non-total posetal PAS satisfying $\mathbf{Comp}$, but not satisfying $\mathbf{Assoc}$ ($(2\cdot 0)\cdot 0\not\downarrow$, while $2\cdot(0\cdot 0)=1$);
$$\begin{tabular}{|c|c|c|c|c|}
\hline
$\cdot$	&0	&1	&2	&3\\
\hline
0&0	&1	&2	&3\\
\hline
1&-	&-	&-	&-\\
\hline
2&1	&-	&-	&-\\
\hline
3&-	&-	&-	&-\\
\hline
\end{tabular}$$

\item the following table represents a non-total posetal PAS satisfying $\mathbf{Assoc}$, but not satisfying $\mathbf{Comp}$;
$$\begin{tabular}{|c|c|c|c|}
\hline
$\cdot$	&0	&1	&2\\
\hline
0&0	&1	&2\\
\hline
1&1	&-	&-\\
\hline
2&2	&-	&-\\
\hline
\end{tabular}$$
\item the first example in the previous list is a posetal PAS which does not satisfy $\mathbf{Assoc}$ ($(2\cdot 1)\cdot 1=2$ but $2\cdot (1\cdot 1)\not\downarrow$) neither $\mathbf{Comp}$.\end{enumerate}

\section{Generated posetal PASs}
We discuss here some of the relevant elementary properties we considered in the previous sections when we restrict to generated posetal partial applicative structures.

Lef us fix a TL partial function $f:\mathsf{R}\rightharpoonup \mathsf{R}$ and an injection $j$ of the set $\mathbb{N}_f:=\{n\in \mathbb{N}|\,n<\Delta(f)\}$ into $\mathsf{R}$ and let us consider the PAS $\mathcal{R}_{f,j}$. Without loss of generality we can assume $\mathsf{R}$ to be of the form $\mathbb{N}_f\cup X$ where $X\cap\mathbb{N}=\emptyset$, and $j$ to be the inclusion of $\mathbb{N}_f$ into $\mathsf{R}$. In this case the partial function $\cdot=\cdot_{f,j}$ of the PAS $\mathcal{R}=\mathcal{R}_{f,j}$ can be described as follows:
$$\begin{cases}
n\cdot a\simeq f^{n}(a)\textrm{ if }n\in\mathbb{N}_f \\
r\cdot a\not \downarrow\textrm{ if }r\in X\\
\end{cases}$$
We have the following results:
\begin{proposition}\label{trivgen}
Let $\mathcal{R}_{f,j}$ be a generated PAS defined as above. Then the following are equivalent
\begin{enumerate}
\item $\mathcal{R}_{f,j}$ is trivial;
\item $\mathcal{R}_{f,j}$ is total;
\item $\Delta(f)=1$ and $X=\emptyset$.
\end{enumerate}
\end{proposition}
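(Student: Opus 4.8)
The plan is to prove the cycle $(1)\Rightarrow(2)\Rightarrow(3)\Rightarrow(1)$, which is the most economical route; the two outer implications are bookkeeping, so the real work sits in $(2)\Rightarrow(3)$. For $(1)\Rightarrow(2)$, a trivial PAS has $\left|\mathsf{R}\right|=1$, so $\mathsf{R}\times\mathsf{R}$ consists of a single pair $(a,a)$ on which $\cdot$ is defined (indeed $a\cdot a=a$); hence $\mathcal{R}_{f,j}$ is $\mathbf{Total}$. For $(3)\Rightarrow(1)$, if $\Delta(f)=1$ then $\mathbb{N}_f=\{n\in\mathbb{N}\mid n<1\}=\{0\}$, and combined with $X=\emptyset$ this gives $\mathsf{R}=\{0\}$; since $0\cdot 0\simeq f^{0}(0)=0$, the structure is $\mathbf{Trivial}$.

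The substance is $(2)\Rightarrow(3)$, where I would establish the two conclusions separately. First I dispose of $X$: if $X\neq\emptyset$, any $r\in X$ satisfies $r\cdot a\not\downarrow$ for every $a\in\mathsf{R}$ (and $\mathsf{R}\neq\emptyset$), so totality fails; hence totality forces $X=\emptyset$. The core claim is that totality also forces $\Delta(f)=1$, which I would argue contrapositively by producing a pair on which $\cdot$ diverges under the assumption $\Delta(f)>1$. Since $\Delta(f)>1$, the set $\{\delta_f(a)\mid \delta_f(a)<+\infty\}$ is non-empty and witnesses a value $\geq 2$: if $\Delta(f)$ is finite its maximum is attained and is $\geq 2$, while if $\Delta(f)=+\infty$ the set is unbounded and so contains a finite value $\geq 2$. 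Thus there is $a_0\in\mathsf{R}$ with $2\leq m:=\delta_f(a_0)\leq\Delta(f)$.

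The key move is a one-step shift: setting $b:=f(a_0)$ (defined since $m\geq 2$), one checks that $f^{n}(b)=f^{n+1}(a_0)$, whence $\delta_f(b)=m-1$, and crucially $m-1<\Delta(f)$ (immediate whether $\Delta(f)$ is finite, since then $m-1\leq\Delta(f)-1$, or $+\infty$). Therefore $m-1\in\mathbb{N}_f$, and $(m-1)\cdot b\simeq f^{m-1}(b)=f^{m}(a_0)\not\downarrow$ exhibits non-totality, the desired contradiction; together with $X=\emptyset$ this yields $(3)$.

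The main obstacle is precisely this last step: one must handle uniformly the case in which $\Delta(f)$ is a finite maximum, which is \emph{not} itself an index lying in $\mathbb{N}_f$, and the case $\Delta(f)=+\infty$. The shift $a_0\mapsto f(a_0)$ resolves both at once, since it lowers the divergence to $m-1$, a value that always lies strictly below $\Delta(f)$ and hence indexes a genuine element of $\mathbb{N}_f$ whose action diverges; without this shift the finite case would be awkward, because the natural witnessing exponent $m=\Delta(f)$ need not belong to $\mathbb{N}_f$.
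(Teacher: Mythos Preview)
Your proof is correct and follows the same cycle $(1)\Rightarrow(2)\Rightarrow(3)\Rightarrow(1)$ as the paper, with identical arguments for the two easy implications and for the disposal of $X$. The only difference is in the core step showing that totality forces $\Delta(f)=1$.

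The paper's argument here is shorter: it simply observes that if $\Delta(f)>1$ then $1\in\mathbb{N}_f$, so totality gives $f(x)=1\cdot x\downarrow$ for every $x\in\mathsf{R}$, i.e.\ $f$ is total; since $f$ is TL, Proposition~\ref{totalTOTL} forces $f=\mathsf{id}_{\mathsf{R}}$, whence $\Delta(f)=1$, a contradiction. Your route instead manufactures an explicit divergent application by locating $a_0$ with $2\le m=\delta_f(a_0)<+\infty$ and shifting to $b=f(a_0)$ so that $m-1\in\mathbb{N}_f$ and $(m-1)\cdot b\not\downarrow$. This works and has the minor virtue of not invoking the TL hypothesis at this point, but the shift you describe as the ``main obstacle'' is more machinery than the situation demands: once $\Delta(f)>1$ you already have $1\in\mathbb{N}_f$, and any $c\notin\mathsf{Dom}(f)$ (for instance $c=f^{m-1}(a_0)$ in your notation) gives $1\cdot c\not\downarrow$ directly. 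So your worry about the exponent $\Delta(f)$ not lying in $\mathbb{N}_f$ is real but easily sidestepped by using the index $1$ rather than $m-1$.
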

\begin{proof}
We already noticed that $\mathbf{Trivial}\Rightarrow  \mathbf{Total}$, that is 1.\ implies 2.\ Let us now assume $\mathcal{R}_{f,j}$ to be total; then $X=\emptyset$ and $i\cdot x\downarrow$ for every $i,x\in \mathbb{N}_f=\mathsf{R}$. If $1\in \mathbb{N}_f$, then this in particular entails that $f(x)=1\cdot x\downarrow$ for every $x\in \mathsf{R}$. Thus $f$ is total and hence must be the identity. But in this case $\Delta(f)=1$ and $1\notin \mathbb{N}_f$ resulting in a contradiction. Thus $\Delta(f)=1$. We have hence shown that 2.\ implies 3.\ Let us now assume $3.$ Then $\mathsf{R}$ contains only $0$ and $0\cdot 0=f^0(0)=0$ by definition. Thus $\mathcal{R}_{f,j}$ is trivial. Hence, 3.\ implies 1.\
\end{proof}

\begin{proposition}\label{idrgen}
Let $\mathcal{R}_{f,j}$ be a generated PAS. Then $\mathcal{R}_{f,j}$ satisfies $\mathbf{ID}_R$ if and only if $X=\emptyset$ and one of the following holds:
\begin{enumerate}
\item $\Delta(f)=1$;
\item $1<\Delta(f)\in \mathbb{N}^{+}$, $f(i)=i+1$ for every $i<\Delta(f)-1$ and $f(\Delta(f)-1)\not\downarrow$.
\end{enumerate}
In particular, if $\mathcal{R}_{f,j}$ satisfies $\mathbf{ID}_R$ then
\begin{enumerate}
\item $\mathcal{R}_{f,j}$ is finite;
\item $\mathcal{R}_{f,j}$ satisfies $\mathbf{Ab}$;
\item $\mathcal{R}_{f,j}$ satisfies $\mathbf{Assoc}$.
\end{enumerate}
\end{proposition}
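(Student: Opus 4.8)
The plan is to analyze the requirement $\mathbf{ID}_{R}$ directly against the explicit description of $\cdot=\cdot_{f,j}$. Suppose first that $\mathcal{R}_{f,j}$ satisfies $\mathbf{ID}_{R}$, with right identity $\mathbf{j}\in\mathsf{R}$, so that $a\cdot\mathbf{j}=a$ for every $a$. The first step is to force $X=\emptyset$: if some $a\in X$ existed, then $a\cdot\mathbf{j}\not\downarrow$ by the very definition of $\cdot$, contradicting $a\cdot\mathbf{j}=a$. Hence $\mathsf{R}=\mathbb{N}_f$. The second step pins down the identity: evaluating at $a=0$ gives $0\cdot\mathbf{j}=f^{0}(\mathbf{j})=\mathbf{j}$, and since this must equal $0$ we get $\mathbf{j}=0$. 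Consequently the condition $a\cdot\mathbf{j}=a$ becomes $n\cdot 0=f^{n}(0)=n$ for every $n\in\mathbb{N}_f$, i.e.\ $f^{n}(0)=n$ for every $n<\Delta(f)$.

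From $f^{n}(0)=n$ a straightforward induction on $n$ yields $f(i)=i+1$ for every $i<\Delta(f)-1$. The heart of the argument, and the step I expect to be the main obstacle, is then to exclude the possibility that $f$ is total, which simultaneously forces $\Delta(f)$ to be finite and forces divergence at the top. Concretely, if $\Delta(f)=+\infty$ then $f(i)=i+1$ for all $i$, so $f$ is a total TL function; by Proposition \ref{totalTOTL}(2) this makes $f=\mathsf{id}_{\mathsf{R}}$, contradicting $f(0)=1$. Hence $\Delta(f)\in\mathbb{N}^{+}$. If moreover $\Delta(f)>1$ and $f(\Delta(f)-1)\downarrow$ held, then $f$ would be total on the finite set $\{0,\dots,\Delta(f)-1\}$, again TL and hence equal to the identity by Proposition \ref{totalTOTL}(2), contradicting $f(0)=1$. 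This delivers exactly the two alternatives in the statement, the case $\Delta(f)=1$ being the degenerate one where the induction is vacuous.

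For the converse I would simply verify that $0$ is a right identity under each hypothesis, using $X=\emptyset$. When $\Delta(f)=1$ we have $\mathsf{R}=\{0\}$ and $0\cdot 0=f^{0}(0)=0$. When $1<\Delta(f)\in\mathbb{N}^{+}$ with $f(i)=i+1$ for $i<\Delta(f)-1$, a direct computation gives $f^{n}(0)=n$ for every $n<\Delta(f)$, so that $n\cdot 0=n$ for every $n\in\mathsf{R}$; thus $\mathbf{ID}_{R}$ holds.

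Finally, the three ``in particular'' assertions follow from the shape of $\cdot$ once $\mathbf{ID}_{R}$ is known. Finiteness is immediate, since $\mathsf{R}=\{0,\dots,\Delta(f)-1\}$ with $\Delta(f)$ finite. For $\mathbf{Ab}$ and $\mathbf{Assoc}$ the key observation is that, writing $D=\Delta(f)$, one computes $n\cdot m=f^{n}(m)=n+m$ whenever $n+m<D$ and $n\cdot m\not\downarrow$ otherwise; this truncated addition is manifestly symmetric, giving $\mathbf{Ab}$, and both $(a\cdot b)\cdot c$ and $a\cdot(b\cdot c)$ are defined exactly when $a+b+c<D$ and then equal $a+b+c$, giving $\mathbf{Assoc}$. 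The only care needed here is checking that the two sides of the associativity equation have the same domain of definition, which reduces to the remark that $a+b+c<D$ already entails $a+b<D$ and $b+c<D$.
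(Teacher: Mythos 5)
Your proposal is correct and follows essentially the same route as the paper: force $X=\emptyset$, identify $\mathbf{j}=0$, derive $f(i)=i+1$ from $f^n(0)=n$, exclude $\Delta(f)=+\infty$ (and a defined top value) via the fact that total TL functions are identities, and read off finiteness, $\mathbf{Ab}$ and $\mathbf{Assoc}$ from the resulting truncated-addition description $n\cdot m=n+m$ for $n+m<\Delta(f)$. The only cosmetic difference is that you eliminate $X$ before pinning down $\mathbf{j}$, and you spell out the converse and the domain check for associativity, which the paper leaves implicit.
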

\begin{proof}
If $\mathcal{R}_{f,j}$ satisfies $\mathbf{ID}_{R}$, then there exists $\mathbf{j}$ such that $r\cdot \mathbf{j}=r$ for every $r\in \mathsf{R}_{f,j}$. Thus in particular $\mathbf{j}=f^{0}(\mathbf{j})=0\cdot \mathbf{j}=0$, from which it follows that $f^{n}(0)=n\cdot 0=n$  for every $n\in \mathbb{N}_f$ and hence $f(m-1)=f(f^{m-1}(0))=f^{m}(0)=m$ for every positive $m \in\mathbb{N}_f$. Moreover, since $r\cdot 0$ must be equal to $r$ for every $r\in \mathsf{R}$, but $r\cdot 0\not\downarrow$ for every $r\in X$, we conclude that $X=\emptyset$. 
Thus we have the following three cases:
\begin{enumerate}
\item $\Delta(f)=1$; 
\item $\Delta(f)>1$ and $f$ is defined as the successor function for all $m<\Delta(f)-1$ and $f(\Delta(f)-1)\not\downarrow$ (since $f$ is TL);
\item $\Delta(f)=+\infty$ and $f(i)=i+1$ for every $i\in \mathbb{N}$. 
\end{enumerate}
However, the last case must be excluded since the successor function from $\mathbb{N}$ to $\mathbb{N}$ is not TL (the only total TL functions are identities by Proposition \ref{totalTOTL}).
In the first case, we get the trivial PAS, while in the second case we have $i\cdot j=i+j$ if $i+j<\Delta(f)$, while $i\cdot j\not\downarrow$, otherwise. In particular, $\mathcal{R}_{f,j}$ is always finite, abelian and associative.
\end{proof}

\begin{proposition}\label{extgen}
Let $\mathcal{R}_{f,j}$ be a generated PAS. Then the following are equivalent:
\begin{enumerate}
\item $\mathcal{R}_{f,j}$ satisfies $\mathbf{Ext}$;
\item $X$ has at most one element.
\end{enumerate}
\end{proposition}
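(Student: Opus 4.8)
The plan is to first read off the partial endofunctions represented in $\mathcal{R}_{f,j}$. By the description of $\cdot_{f,j}$ given just before the statement, for $n\in\mathbb{N}_f$ one has $[n]=f^n$, while for $r\in X$ one has $[r]=\emptyset$, the nowhere-defined partial function, since $r\cdot a\not\downarrow$ for every $a$. Thus $\mathbf{Ext}$ is exactly the assertion that $r\mapsto[r]$ is injective on $\mathsf{R}=\mathbb{N}_f\cup X$, and the whole argument reduces to deciding when two of these functions coincide. There are precisely three ways a collision could arise: two codes $n\neq m$ in $\mathbb{N}_f$ with $f^n=f^m$; a code $n\in\mathbb{N}_f$ together with some $r\in X$ for which $f^n=\emptyset$; or two distinct elements of $X$, which automatically represent the same function $\emptyset$.

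For the implication $(2)\Rightarrow(1)$, I would assume $\left|X\right|\leq 1$ and rule out the first two kinds of collision. The key lemma is that $n\mapsto f^n$ is injective on $\mathbb{N}_f$: given $n<m$ in $\mathbb{N}_f$, since $m<\Delta(f)=\sup\{\delta_f(a)\mid \delta_f(a)<+\infty\}$, the number $m$ is not an upper bound of that set, so there exists $a\in\mathsf{R}$ with $m<\delta_f(a)<+\infty$; then $n<m<\delta_f(a)$, and by the distinctness-of-iterates observation recorded right after the definition of order and divergence, $f^n(a)\neq f^m(a)$, whence $f^n\neq f^m$. The same idea shows no $f^n$ equals $\emptyset$: for $n\in\mathbb{N}_f$ one produces an $a$ with $\delta_f(a)>n$ (or a fixed point of $f$), so that $f^n(a)\downarrow$ and $[n]\neq\emptyset$. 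Since $\left|X\right|\leq 1$, the value $\emptyset$ is attained at most once, so $r\mapsto[r]$ is injective and $\mathbf{Ext}$ holds.

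The converse $(1)\Rightarrow(2)$ I would handle by contraposition: if $\left|X\right|\geq 2$, choose distinct $r,s\in X$; then $[r]=[s]=\emptyset$ while $r\neq s$, so $\mathbf{Ext}$ fails.

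The only delicate point I expect is the bookkeeping around $\Delta(f)$ when extracting the witness $a$ with $\delta_f(a)>m$: one must separate the case $\Delta(f)=+\infty$ (the set of finite divergences is unbounded, so a larger value always exists), the case $\Delta(f)\in\mathbb{N}^{+}$ finite (where the supremum is a maximum and hence attained), and the degenerate case in which no element has finite divergence. In that last case $f=\mathsf{id}_{\mathsf{R}}$ by Proposition \ref{totalTOTL}, so $\Delta(f)=1$, $\mathbb{N}_f=\{0\}$, and no pair $n<m$ exists, which makes the injectivity claim vacuous and leaves only the $X$-collision to consider.
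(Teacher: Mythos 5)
Your proof is correct and follows essentially the same route as the paper: elements of $X$ all represent the empty function (giving $(1)\Rightarrow(2)$), while for $(2)\Rightarrow(1)$ one checks that the $f^n$ with $n<\Delta(f)$ are pairwise distinct and never empty. The paper simply asserts this last fact, whereas you justify it by extracting a witness $a$ with $m<\delta_f(a)<+\infty$ from the definition of $\Delta(f)$ and invoking the distinctness-of-iterates remark — a welcome filling-in of detail, but not a different argument.
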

\begin{proof}
Clearly 1.\ implies 2.\ since all the elements of $X$ represent the empty function. Conversely, if $X$ has at most one element, this element represents the empty function, while every $n\in \mathbb{N}_f$ represents $f^n$. Since $f$ is TL, then $f^{i}\neq f^{j}$ for every $0\leq i<j<\Delta(f)$ and $f^{i}$ is never empty for $i<\Delta(f)$. Thus $\mathcal{R}_{f,j}$ satisfies $\mathbf{Ext}$.
\end{proof}
\begin{proposition}\label{compgen}
Let $\mathcal{R}_{f,j}$ be a generated PAS. Then $\mathcal{R}_{f,j}$ satisfies $\mathbf{Comp}$ if and only if one of the following holds:
\begin{enumerate}
\item $\Delta(f)=1$;
\item $X\neq \emptyset$, $1<\Delta(f)\in \mathbb{N}^{+}$ and $\delta_f(a)\in \mathbb{N}^{+}$ for every $a\in \mathsf{R}$;
\item $\Delta(f)=+\infty$.
\end{enumerate}
\end{proposition}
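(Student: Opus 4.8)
The plan is to first pin down exactly which partial functions are representable in $\mathcal{R}_{f,j}$ and then read off what $\mathbf{Comp}$ demands. From the description of $\cdot_{f,j}$, each $n\in\mathbb{N}_f$ represents $[n]=f^n$, while each $r\in X$ represents the empty function $\emptyset$; recall (as in the proof of Proposition \ref{extgen}) that the functions $f^0,\dots,f^{n},\dots$ for $n\in\mathbb{N}_f$ are pairwise distinct and none is empty. Now $\mathbf{Comp}$ says precisely that $[s]\circ[r]$ is representable for all $r,s\in\mathsf{R}$, so I would split on whether $r,s$ lie in $\mathbb{N}_f$ or in $X$. If either of $r,s$ belongs to $X$, then $[s]\circ[r]=\emptyset$, which is represented by that very element of $X$; hence such pairs impose no condition. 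Therefore $\mathbf{Comp}$ is equivalent to requiring that $f^{n+m}=[s]\circ[r]$ (for $r=m$, $s=n$) be representable for all $m,n\in\mathbb{N}_f$.

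Next I would dispose of the two extreme cases for $\Delta(f)$. If $\Delta(f)=1$ then $\mathbb{N}_f=\{0\}$ and the only composite is $f^0=\mathsf{id}_{\mathsf{R}}=[0]$, so $\mathbf{Comp}$ holds (case 1). If $\Delta(f)=+\infty$ then $\mathbb{N}_f=\mathbb{N}$, every exponent $n+m$ again lies in $\mathbb{N}_f$, and $f^{n+m}=[n+m]$ is representable, so $\mathbf{Comp}$ holds (case 3). Both are immediate.

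The substantive case is $1<\Delta(f)=N\in\mathbb{N}^{+}$, where $\mathbb{N}_f=\{0,\dots,N-1\}$ and the exponents $p=n+m$ range over $\{0,\dots,2N-2\}$. For $p<N$ we have $f^p=[p]$, so I only need to control $N\le p\le 2N-2$. Here the key step is a separation argument: since $\Delta(f)=N$ is the sup of a nonempty bounded set of positive integers, it is attained, so there is $\overline a$ with $\delta_f(\overline a)=N$; then $f^k(\overline a)\downarrow$ for $k<N$ while $f^p(\overline a)\not\downarrow$ for $p\ge N$, whence $f^p\neq f^k$ for every $k\in\mathbb{N}_f$. Consequently, for $p\ge N$ the function $f^p$ is representable if and only if $f^p=\emptyset$ and $X\neq\emptyset$.

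Finally I would translate $f^N=\emptyset$ into the stated condition. Because $f$ is TL, each $a$ is either a fixed point ($\mathsf{ord}_f(a)=1$, hence $\delta_f(a)=+\infty$) or satisfies $\delta_f(a)\in\mathbb{N}^{+}$, and $f^p(a)\downarrow$ exactly when $p<\delta_f(a)$. Thus $f^N=\emptyset$ iff $\delta_f(a)\le N$ for all $a$, which—by the definition of $\Delta(f)=N$ as the supremum of the finite divergences—is equivalent to $\delta_f(a)\in\mathbb{N}^{+}$ for every $a\in\mathsf{R}$ (i.e.\ the absence of fixed points). Moreover $f^N=\emptyset$ forces $f^p=\emptyset$ for all $p\ge N$, so a single condition at $p=N$ governs the whole problematic range; conversely $\mathbf{Comp}$ forces $f^N$ (attained with $n=1$, $m=N-1$, legitimate since $N\ge 2$) to be representable, hence empty with $X\neq\emptyset$. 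Combining, $\mathbf{Comp}$ holds in this case iff $X\neq\emptyset$ and $\delta_f(a)\in\mathbb{N}^{+}$ for all $a$, which is case 2. The main obstacle is exactly this finite case: one must simultaneously rule out an accidental coincidence of a ``late'' power $f^p$ with an ``early'' $f^k$ (handled by the maximal-divergence element $\overline a$) and correctly observe that representability of $f^N$ collapses to emptiness together with $X\neq\emptyset$; the TL hypothesis is precisely what makes both steps work, via the clean dichotomy between fixed points and eventual divergence.
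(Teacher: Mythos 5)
Your proposal is correct and follows essentially the same route as the paper: identify the representable functions as the powers $f^n$ for $n\in\mathbb{N}_f$ plus (if $X\neq\emptyset$) the empty function, split on the three regimes of $\Delta(f)$, and in the finite case $1<\Delta(f)=N$ use an element of maximal divergence to separate $f^p$ ($p\geq N$) from every $f^k$ with $k<N$, reducing $\mathbf{Comp}$ to the emptiness of $f^N$ together with $X\neq\emptyset$. Your write-up is in fact somewhat more explicit than the paper's (e.g.\ in naming the witnessing composition $f\circ f^{N-1}$ and in checking the converse direction of the finite subcase), but the underlying argument is the same.
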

\begin{proof}
Let  $\mathcal{R}_{f,j}$ be a generated PAS and let us consider the following three cases: $\Delta(f)=1$, $1<\Delta(f)\in \mathbb{N}^{+}$ and $\Delta(f)=+\infty$. In the first case $\mathbf{Comp}$ holds since the representable partial functions are only the identity and (possibly) the empty function. In the third case $\mathbf{Comp}$ holds since the representable partial functions are the iterated compositions $f^{n}$ for $n\in \mathbb{N}$ and (possibly) the empty function. In the second case we distinguish two subcases: 
\begin{enumerate}
\item $\delta_f(x)\in \mathbb{N}^{+}$ for every $x\in \mathsf{R}$. In this case in order to get $\mathbf{Comp}$ we need the empty function to be representable, and this amounts to require $X\neq \emptyset$;
\item $\delta_f(x)=+\infty$ for some $x\in \mathsf{R}$. For such $x$, we get $f^n(x)=x$ for every $n\in \mathbb{N}$. In particular, $f^{\Delta(f)}(x)=x$, while $f^{\Delta(f)}(y)\not \downarrow$ for every $y$ such that $\delta_{f}(y)\in \mathbb{N}^{+}$. Thus $f^{\Delta(f)}$ is not represented by any element of $\mathsf{R}$, while $f$ is represented by $1$. Hence this case must be excluded.
\end{enumerate}
\end{proof}
Putting together the information in the previous propositions and in the previous section we obtain the following picture.
$$\xymatrix{
							&			&\mathbf{Trivial}\equiv \mathbf{Total}\ar@{=>}[rdd]\ar@{=>}[lld]			&\\
\mathbf{Ab}\equiv \mathbf{ID}_{R}\ar@{=>}[drr]\ar@{=>}[dr]\ar@{=>}[d]		&		&	&\\
\mathbf{Ext}			&\mathbf{Finite}			&\mathbf{Assoc}		&\mathbf{Comp}\\
}$$

If we limit ourselves to the finite case, we obtain the following results:
\begin{proposition}
Let $\mathcal{R}=\mathcal{R}_{f,j}$ be a finite generated posetal PAS. Then:
\begin{enumerate}
\item $\mathcal{R}$ satisfies $\mathbf{Ext}$ if and only if $\Delta(f)=1$ and $X$ has at most one element, or $\Delta(f)=n>1$ and (the directed graph associated to) $f$ has one of the following forms:
$$\xymatrix{
\textrm{Case 1}		&		&\textrm{Case 2}		&		&\textrm{Case 3}	&		&\textrm{Case 4}	&\\
\alpha_{0}\ar[d]		&		&\alpha_{0}\ar[d]		&		&\alpha_{0}\ar[d]	&		&\alpha_{0}\ar[d]	&	\\
\vdots	\ar[d]		&		&\vdots\ar[d]			&		&\vdots	\ar[d]		&		&\vdots\ar[d]		&\beta\ar[ld]\\
\vdots	\ar[dd]		&		&\vdots\ar[dd]			&		&\vdots	\ar[dd]		&		&\alpha_k\ar[d]		&\\
&				&		&					&						&		&\vdots\ar[d]			&\\
\alpha_{n-1}		&		&\alpha_{n-1}			&\beta	&\alpha_{n-1}		&\beta\ar@(ul,ur)	&\alpha_{n-1}		&\\
}$$
with $k\in \{1,...,n-1\}$.

\item $\mathcal{R}$ satisfies $\mathbf{Comp}$ if and only if $\Delta(f)=1$ or $1<\Delta(f)=n\in \mathbb{N}$, $X\neq \emptyset$ and $\delta_f(a)\in \mathbb{N}^{+}$ for every $a\in \mathsf{R}$. 
\item $\mathcal{R}$ satisfies both $\mathbf{Ext}$ and $\mathbf{Comp}$ if and only if $\Delta(f)=1$ and $X$ has at most one element, or $1<\Delta(f)=n\in \mathbb{N}$ and we are in Case 2 or Case 4 above.
\item $\mathcal{R}$ safisfies $\mathbf{Ext}$ and $\mathbf{Assoc}$ if and only if $\Delta(f)=1$ and $X$ has at most one element, or $1<\Delta(f)=n\in \mathbb{N}$ and we are in Case 1 or Case 2 above, with
$\alpha_k=k$ for every $k<n$.

\item $\mathcal{R}$ safisfies $\mathbf{Ext}$, $\mathbf{Assoc}$ and $\mathbf{Comp}$ if and only if $\Delta(f)=1$ and $X$ has at most one element, or $\Delta(f)=n>1$ and we are in Case 2 above with $\alpha_k=k$ for every $k<n$.
\end{enumerate}
\end{proposition}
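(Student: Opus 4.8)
The plan is to treat items 1, 2, 3 and 5 as bookkeeping built on the standalone characterizations of $\mathbf{Ext}$ and $\mathbf{Comp}$ already available (Propositions \ref{extgen} and \ref{compgen}), and to concentrate the real effort on item 4, where $\mathbf{Assoc}$ must be analysed from scratch. Throughout I work in the normalized presentation $\mathsf{R}=\mathbb{N}_f\cup X$, writing $n:=\Delta(f)$, which is finite since $\mathbb{N}_f$ would otherwise be infinite, and using $k\cdot a\simeq f^{k}(a)$ for $k\in\mathbb{N}_f$ together with $r\cdot a\not\downarrow$ for $r\in X$.

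For item 1, Proposition \ref{extgen} already reduces $\mathbf{Ext}$ to the condition $|X|\leq 1$, so the only task is the combinatorial identification of the graph of $f$ when $\Delta(f)=n>1$. First I would fix $\overline{a}$ with $\delta_f(\overline{a})=n$ (attained since $\mathsf{R}$ is finite), giving a maximal chain $\overline{a},f(\overline{a}),\dots,f^{n-1}(\overline{a})$ of $n$ distinct vertices whose last vertex is a root at which $f$ is undefined. As $|\mathsf{R}|=n+|X|$, the hypothesis $|X|\leq 1$ leaves at most one vertex $\beta$ off this chain: when $|X|=0$ the chain exhausts $\mathsf{R}$ (Case 1), and when $|X|=1$ the single undetermined value $f(\beta)$ splits into the mutually exclusive possibilities $f(\beta)\not\downarrow$, $f(\beta)=\beta$, $f(\beta)=\alpha_{k}$ (Cases 2, 3, 4), where $\mathbf{TL}$ together with maximality of $n$ forbids $f(\beta)=\alpha_{0}$ and forces $k\geq 1$. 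The converse is immediate since each displayed shape has $|X|\leq 1$. Item 2 is Proposition \ref{compgen} read in the finite case: its clause $\Delta(f)=+\infty$ cannot occur, leaving exactly the two stated alternatives. Item 3 is then the conjunction of items 1 and 2: for $\Delta(f)=n>1$, $\mathbf{Comp}$ demands $X\neq\emptyset$ (eliminating Case 1) and $\delta_f(a)<+\infty$ for every $a$, i.e.\ no loop (eliminating Case 3, whose vertex $\beta$ is a fixed point), so precisely Cases 2 and 4 remain.

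The heart of the argument is item 4. The computation to run is $k\cdot(l\cdot c)\simeq f^{k+l}(c)$ against $(k\cdot l)\cdot c=f^{k}(l)\cdot c$ for labels $k,l\in\mathbb{N}_f$. Specializing to $k=1$ and evaluating at $c=\overline{a}$ on the maximal chain, associativity forces $f(l)$ to be a label acting as $f^{l+1}$ (it cannot land in $X$, as that would make the right-hand side diverge while the left-hand side converges), and distinctness of the chain vertices then pins it down to $f(l)=l+1$ for every $l\leq n-2$; this is exactly the normalization $\alpha_{k}=k$, showing $\mathbb{N}_f$ is the chain $0\to 1\to\cdots\to(n-1)$ with $f(n-1)\not\downarrow$. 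I would then exclude Cases 3 and 4 by explicit witnesses: in Case 3 the triple $(n-1,n-1,\beta)$ yields $(n-1)\cdot\big((n-1)\cdot\beta\big)=\beta$ while $\big((n-1)\cdot(n-1)\big)\cdot\beta\not\downarrow$ (since $f(n-1)\not\downarrow$); in Case 4 the triple $(1,\beta,0)$ yields $(1\cdot\beta)\cdot 0=f(\beta)\cdot 0\downarrow$ (as $f(\beta)$ is a chain vertex) while $1\cdot(\beta\cdot 0)\not\downarrow$ because $\beta\in X$. Conversely, for $\Delta(f)=1$ with $|X|\leq 1$, and for Cases 1 and 2 with $\alpha_{k}=k$, I would verify associativity by splitting on whether the leftmost argument is $0$, a nonzero label, or an element of $X$: the chain part reduces to truncated addition $k\cdot l=k+l$ (undefined once $k+l\geq n$), which is associative, and the inertness of $X$ handles every triple involving a non-label. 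Finally, item 5 is the intersection of items 3 and 4: for $\Delta(f)=1$ one keeps $|X|\leq 1$, while for $\Delta(f)=n>1$ the only shape common to the two lists is Case 2, necessarily with $\alpha_{k}=k$.

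The step I expect to be the main obstacle is the forward direction of item 4, and within it the discovery of clean associativity-failure witnesses for Cases 3 and 4. Both failures rest on the asymmetry between \emph{productive} chain vertices, at which $\cdot$ keeps acting after $f$ is applied, and the \emph{inert} elements of $X$, at which $\cdot$ is everywhere undefined, so the witnesses must be chosen to exploit precisely this mismatch. Deriving $\alpha_{k}=k$ from associativity requires the same care, since one must first rule out the possibility that $f(l)$ lands in $X$ before concluding that it equals the label $l+1$. All remaining items are routine once Propositions \ref{extgen} and \ref{compgen} are in hand.
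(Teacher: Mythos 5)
Your overall architecture matches the paper's: items 1, 2, 3 and 5 are indeed bookkeeping over Propositions \ref{extgen} and \ref{compgen}, all the real work sits in item 4, and your derivation of $\alpha_k=k$ from associativity via the triples $(1,l,\overline{a})$ is a sound variant of the paper's argument (which instead uses the triples $(\alpha_k,0,\alpha_0)$ after first locating the element of $X$ among the vertices). Your converse verification via truncated addition is also fine.

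There is, however, a genuine gap in your exclusion of Case 4. Your witness $(1,\beta,0)$ rests on the assumption that the pictured vertex $\beta$ is the element of $X$, and nothing you derived forces this. Concretely, take Case 4 with $k=1$: the graph has two sources, $\alpha_0$ and $\beta$, both feeding into $\alpha_1$, and your derived fact that the labels form a successor chain $0\to 1\to\cdots\to(n-1)$ is equally consistent with the labels occupying the chain $\beta\to\alpha_1\to\cdots\to\alpha_{n-1}$, in which case $\beta$ is the label $0$ and the element of $X$ is $\alpha_0$. In that configuration your triple computes $1\cdot(\beta\cdot 0)=1\cdot(0\cdot 0)=f(0)=1$ and $(1\cdot\beta)\cdot 0=f(0)\cdot 0=1\cdot 0=1$: both sides converge to the same value, so associativity is not violated and the sub-case is left open. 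The configuration is still non-associative --- $1\cdot(\alpha_0\cdot\alpha_0)\not\downarrow$ while $(1\cdot\alpha_0)\cdot\alpha_0=\alpha_1\cdot\alpha_0\downarrow$, which is essentially the witness the paper uses to show that the element of $X$ cannot lie in $\{\alpha_0,\dots,\alpha_{n-2}\}$ --- so the statement survives, but your proof does not cover it. To repair it, either first pin down the position of the element of $X$ as the paper does (by explicit witnesses it is excluded from $\{\alpha_0,\dots,\alpha_{n-2}\}$, from $\beta$ in Case 4, and finally from $\alpha_{n-1}$), or note that for $k=1$ the two sources of the Case 4 graph are interchangeable, so one may rename them so that $\beta$ is the non-label source before applying your witness. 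In Cases 2 and 3 the analogous identification is in fact forced (the isolated or loop vertex cannot lie on the successor chain), so your implicit use of $\beta\in X$ is harmless there; it is only Case 4 with $k=1$ where the case analysis is incomplete.
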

\begin{proof}
1.\ and 2.\ immediately follow from Propositions \ref{extgen} and \ref{compgen} by adding the finiteness constraint. 3.\ immediately follows from 1.\ and 2.\
Item 5.\ is an immediate consequence of items 3.\ and 4.\ 
So it remains to prove item 4.\ 

 First, it is immediate to verify that if $\Delta(f)=1$ and $X$ has at most one element, then $\mathsf{R}$ is associative. It hence remain to consider Cases 1,2,3,4 assuming $\mathbf{Assoc}$ to hold. In Cases 2,3 and 4 the only element $x$ of $X$ cannot be in $\{\alpha_0,...,\alpha_{n-2}\}$ and in Case $4$ it cannot be $\beta$, since otherwise we would have $1\cdot (x\cdot \alpha_0)\not\downarrow$, but $(1\cdot x)\cdot \alpha_0\downarrow$. Thus $x$ can be $\alpha_{n-1}$ or $\beta$ in Cases 2 and 3, or $\alpha_{n-1}$ in Case 4. We then observe that, for every $k\in \{0,...,n-2\}$, we have $\alpha_k\cdot \alpha_0=\alpha_k\cdot (0\cdot \alpha_0)\simeq(\alpha_k\cdot 0) \cdot \alpha_0$, and since $\alpha_k\cdot \alpha_0\downarrow$, it follows that $\alpha_{k}\cdot 0=\alpha_{k}$, from which it follows that $\alpha_0=0$ and thus $\alpha_k=k$ for every $k\in \{0,...,n-2\}$, since $\alpha_{\alpha_k}=\alpha_{k}\cdot \alpha_0=\alpha_{k}\cdot 0=\alpha_k$ for such a $k$. Let us now assume that $x=\alpha_{n-1}$ in one of Cases 2,3,4. Then, $((n-1)\cdot 0) \cdot 0\not\downarrow$, but $(n-1)\cdot (0\cdot 0)\downarrow$. This in particular let us exclude Case 4 in toto. However, also Case 3 can be excluded, since if $x=\beta$, then for $k,j\in \mathbb{N}_f$ with $k+j>n-1$ we have $k\cdot (j\cdot x)\downarrow$, but $(k\cdot j)\cdot x\not\downarrow$. So let us keep only Case 2 with $\alpha_k=k$ for every $k=0,...,n-1$. For Case 1, we can reason as above and show that if $\mathcal{R}$ is associative, then $\alpha_k=k$ for every $k=0,...,n-1$. To conclude we must show that each of these two cases is in fact associative. This is a simple verification for Case 1 and for Case 2, once one restrict to values in $\{0,...,n-1\}$. One can also easily check that $a\cdot (b\cdot c)\not\downarrow$ and $(a\cdot b)\cdot c\not\downarrow$ in Case 2 if at least one among $a$, $b$ and $c$ is $x$.

\end{proof}
\section{On completeness for preorderal PASs}
In this subsection we will always assume $\mathcal{R}$ to be a preorderal PAS. 
\begin{definition} Let $\leq$ be a preorder relation on a set $A$ and $(a_i)_{i\in I}$ a set-indexed
family of elements of $A$. An element $b\in A$ is a \emph{supremum} for $(a_i)_{i\in I}$ if $a_i\leq b$  for every $i\in I$, and for every $c\in A$, if $a_i\leq c$ for every $i\in I$, then $b\leq c$. The preorder $\leq$ on $A$ is complete if every set-indexed family of elements of $A$ has a supremum.
\end{definition}
If $(A,\leq)$ is complete, it is well known that each family $(a_i)_{i\in I}$ of its elements has also an \emph{infimum}, that is an element $d\in A$ such that:
\begin{enumerate}
\item $d\leq a_i$ for every $i\in I$;
\item for every $c\in A$, if $c\leq a_i$ for every $i\in I$, then $c\leq d$.
\end{enumerate}
Indeed, one can just take $d$ to be a supremum for $\{c\in A|\;\forall i\in I\,(c\leq a_i)\}$. Similarly, one can show that each preorder having infima for all set-indexed families of its elements is complete.
We will use $\bigvee_{i\in I} a_i$ and $\bigwedge_{i\in I} a_i$ to denote a supremum and an infimum for $(a_i)_{i\in I}$, respectively. 

A PAS $\mathcal{R}$ is \emph{complete} if $\pi_\mathcal{R}$ factors through the subcategory $\mathbf{cPord}$ of $\mathbf{Bin}$ of  complete preorders, more precisely if $\pi_{\mathcal{R}}(I)$ is complete for every $I$ and the reindexing maps $\pi_{\mathcal{R}}(f)$ preserve suprema and infima.

In order to provide a necessary condition for a PAS to be complete we need the following lemma.

\begin{lemma}\label{l1}Let $(Y_r)_{r\in \mathsf{R}}$ be a family of non-empty subsets of $\mathsf{R}$ such that 
$$\left|\prod_{r\in \mathsf{R}}Y_r\right|>|\mathsf{R}|$$
and let $(X_r)_{r\in \mathsf{R}}$ be a family of pairwise disjoint non-empty subsets of $\mathsf{R}$. Then, there exists a function $$\varphi : \bigcup_{r\in \mathsf{R}}X_{r}\rightarrow \mathsf{R}$$ such that 
\begin{enumerate}
\item for every $r\in \mathsf{R}$ and every $x\in X_r$, $\varphi(x)\in Y_r$;
\item for every $r\in \mathsf{R}$ and every $x,y\in X_r$, $\varphi(x)=\varphi(y)$;
\item there is no $s\in \mathsf{R}$ such that $[s]\supseteq \varphi$.
\end{enumerate}
\end{lemma}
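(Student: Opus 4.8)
The plan is to read conditions~(1) and~(2) as the single requirement that $\varphi$ be constant on each block $X_r$ with its common value lying in $Y_r$, and then to defeat condition~(3) by a counting argument: the cardinality hypothesis guarantees strictly more such $\varphi$ than there are elements of $\mathsf{R}$, while each $s\in\mathsf{R}$ can spoil at most one of them.

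First I would introduce the set
$$\Phi:=\Big\{\varphi:\textstyle\bigcup_{r\in\mathsf{R}}X_r\to\mathsf{R}\ \Big|\ \varphi\text{ satisfies (1) and (2)}\Big\}.$$
Since the $X_r$ are pairwise disjoint and non-empty, the rule that sends $\varphi\in\Phi$ to the family $(y_r)_{r\in\mathsf{R}}$, where $y_r$ is the common value taken by $\varphi$ on $X_r$, is a bijection between $\Phi$ and $\prod_{r\in\mathsf{R}}Y_r$: non-emptiness of each $X_r$ makes $y_r$ well defined, condition~(2) makes it unambiguous, condition~(1) places it in $Y_r$, and pairwise disjointness allows $\varphi$ to be reconstructed from $(y_r)$. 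Consequently $|\Phi|=\big|\prod_{r\in\mathsf{R}}Y_r\big|>|\mathsf{R}|$ by hypothesis.

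Next I would bound the ``bad'' subfamily $B:=\{\varphi\in\Phi\mid\exists\,s\in\mathsf{R}\ ([s]\supseteq\varphi)\}$. The key observation is that for a fixed $s$ there is at most one $\varphi\in\Phi$ with $[s]\supseteq\varphi$: the condition $[s]\supseteq\varphi$ forces $\varphi$ to coincide with the restriction $[s]|_{\bigcup_{r\in\mathsf{R}}X_r}$, and this restriction is completely determined by $s$ (it even exists only when $[s]$ is defined on all of $\bigcup_{r\in\mathsf{R}}X_r$ and the resulting restriction happens to lie in $\Phi$). Hence the rule $s\mapsto[s]|_{\bigcup_{r\in\mathsf{R}}X_r}$ defines a partial function $\mathsf{R}\rightharpoonup\Phi$ whose image is exactly $B$, so that $|B|\leq|\mathsf{R}|$.

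Finally, since $B\subseteq\Phi$ and $|B|\leq|\mathsf{R}|<|\Phi|$, the difference $\Phi\setminus B$ is non-empty; any $\varphi$ chosen from it lies in $\Phi$, hence satisfies (1) and~(2), and avoids $B$, hence satisfies~(3). The main (and essentially the only) obstacle is the reduction step itself: recognizing that (1) together with~(2) identifies the admissible $\varphi$ with the elements of $\prod_{r\in\mathsf{R}}Y_r$, and that ``being extended by some $[s]$'' pins an admissible $\varphi$ down uniquely, so that at most $|\mathsf{R}|$ of them are excluded. Once this is in place, the hypothesis $\big|\prod_{r\in\mathsf{R}}Y_r\big|>|\mathsf{R}|$ closes the argument at once.
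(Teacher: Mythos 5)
Your proof is correct and follows essentially the same route as the paper: the paper's (one-line) proof rests on exactly the count $|\{\varphi \text{ satisfying (1) and (2)}\}| = \left|\prod_{r\in\mathsf{R}}Y_r\right| > |\mathsf{R}|$, leaving implicit the observation you make explicit, namely that each $s\in\mathsf{R}$ can extend at most one such $\varphi$. Your write-up is simply a more detailed version of the same counting argument.
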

\begin{proof} The result follows from the fact that $|\{f : \bigcup_{r\in \mathsf{R}}X_{r}\rightarrow \mathsf{R}|\,\textrm{(1) and (2) holds}\}|=\left|\prod_{r\in \mathsf{R}}Y_r\right|>|\mathsf{R}|$. \end{proof}

Moreover, it will be useful to introduce the following notation.
\begin{definition} For every $r\in \mathsf{R}$ we define $\mathsf{R}(r)$ to be $\{s\cdot r|\,s\in \mathsf{R}\}$.
\end{definition}
\noindent Notice that $r\in \mathsf{R}(r)$ for every $r\in \mathsf{R}$, since $\mathcal{R}$ is reflexive. 

We are now ready to state and prove the main result of this section:

\begin{theorem}\label{comp1} If $\left|\prod_{r\in \mathsf{R}}\mathsf{R}(r)\right|>|\mathsf{R}|$, 
then $\pi_{\mathcal{R}}(\mathsf{R})=(\mathcal{P}(\mathsf{R})^{\mathsf{R}},\vdash_{\mathsf{R}})$ is not complete. In particular, if $\mathcal{R}$ is complete, then $\left|\prod_{r\in \mathsf{R}}\mathsf{R}(r)\right|\leq |\mathsf{R}|$.
\end{theorem}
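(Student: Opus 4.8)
The plan is to establish the contrapositive in the sharper form stated last: assuming $\pi_{\mathcal{R}}(\mathsf{R})$ is complete, I will build an injection $\prod_{r\in \mathsf{R}}\mathsf{R}(r)\hookrightarrow \mathsf{R}$, which forces $\left|\prod_{r\in \mathsf{R}}\mathsf{R}(r)\right|\leq |\mathsf{R}|$. Equivalently, under the hypothesis $\left|\prod_{r\in \mathsf{R}}\mathsf{R}(r)\right|>|\mathsf{R}|$ I will exhibit one concrete family in the fiber $\pi_{\mathcal{R}}(\mathsf{R})$ that has no supremum. The family I would use is the family of \emph{diagonal point-masses} $(\chi_r)_{r\in \mathsf{R}}$, where $\chi_r:\mathsf{R}\rightarrow \mathcal{P}(\mathsf{R})$ is given by $\chi_r(r)=\{r\}$ and $\chi_r(r')=\emptyset$ for $r'\neq r$. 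These are genuine objects of the fiber, and (using reflexivity, so $r\in \mathsf{R}(r)$) their pointwise union is the singleton function $\mathsf{sgl}$.

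First I would compute exactly which $\psi\in \mathcal{P}(\mathsf{R})^{\mathsf{R}}$ are upper bounds. For fixed $r$ the factor $\chi_r(r')\Rightarrow \psi(r')$ equals all of $\mathsf{R}$ whenever $r'\neq r$ (the implication set out of $\emptyset$ is everything), and equals $\{t\in \mathsf{R}\mid t\cdot r\downarrow \wedge\, t\cdot r\in \psi(r)\}$ when $r'=r$. Hence $\chi_r\vdash_{\mathsf{R}}\psi$ holds precisely when $\psi(r)\cap \mathsf{R}(r)\neq \emptyset$, and so $\psi$ is an upper bound of $(\chi_r)_{r}$ exactly when $\psi(r)\cap \mathsf{R}(r)\neq \emptyset$ for every $r$. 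The decisive consequence is that for every choice function $h\in \prod_{r\in \mathsf{R}}\mathsf{R}(r)$ the point-function $\Phi_h$ defined by $\Phi_h(r)=\{h(r)\}$ is an upper bound: since $h(r)\in \mathsf{R}(r)$ we have $\Phi_h(r)\cap \mathsf{R}(r)=\{h(r)\}\neq \emptyset$.

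Next I would assume, for contradiction, that $(\chi_r)_{r}$ has a supremum $\sigma$. Being an upper bound, $\sigma$ satisfies $\sigma(r)\cap \mathsf{R}(r)\neq \emptyset$, so each $\sigma(r)$ is non-empty and I may pick $b_r\in \sigma(r)$. Because $\sigma$ is the \emph{least} upper bound and each $\Phi_h$ is an upper bound, $\sigma\vdash_{\mathsf{R}}\Phi_h$ for every $h\in \prod_{r\in \mathsf{R}}\mathsf{R}(r)$; fixing a realizer gives $u_h\in \bigcap_{r\in \mathsf{R}}(\sigma(r)\Rightarrow \Phi_h(r))$, which unwinds to $u_h\cdot b=h(r)$ for all $r$ and all $b\in \sigma(r)$, and in particular $u_h\cdot b_r=h(r)$ for every $r$. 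The map $h\mapsto u_h$ is then injective: if $u_h=u_{h'}$ then $h(r)=u_h\cdot b_r=u_{h'}\cdot b_r=h'(r)$ for every $r$, so $h=h'$. This gives the desired injection $\prod_{r\in \mathsf{R}}\mathsf{R}(r)\hookrightarrow \mathsf{R}$, contradicting the hypothesis. Therefore $(\chi_r)_{r}$ admits no supremum, $\pi_{\mathcal{R}}(\mathsf{R})$ is not complete, and the final clause follows by contraposition.

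The step I would check most carefully is the clean description of the upper bounds: the point is that the many separate entailments $\chi_r\vdash_{\mathsf{R}}\psi$ impose only the \emph{non-uniform}, pointwise conditions $\psi(r)\cap \mathsf{R}(r)\neq \emptyset$, so that every $\Phi_h$ qualifies, whereas realizing a single comparison $\sigma\vdash_{\mathsf{R}}\Phi_h$ demands one \emph{uniform} realizer $u_h$; it is exactly this asymmetry that converts the cardinality gap into a contradiction. The concluding count is morally the content of Lemma \ref{l1} with $Y_r=\mathsf{R}(r)$: instead of producing the injection by hand, one could invoke the lemma to obtain a choice function with no element of $\mathsf{R}$ representing it, and then note that $\sigma\vdash_{\mathsf{R}}\Phi_{\varphi}$ would manufacture such an element; I prefer the direct injection since it needs no disjointness hypothesis and uses only reflexivity, not transitivity.
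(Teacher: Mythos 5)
Your proof is correct and follows essentially the same route as the paper: the same family of diagonal point-masses $(\varphi_a)_{a\in\mathsf{R}}$, the same observation that every choice function $h\in\prod_{r\in\mathsf{R}}\mathsf{R}(r)$ yields a singleton-valued upper bound, and the same comparison of a putative supremum against these bounds. The only divergence is the endgame: the paper first derives pairwise disjointness of the values of the supremum (via $\psi\vdash_{\mathsf{R}}\mathsf{sgl}$) so as to apply the counting Lemma~\ref{l1} and produce a single non-representable choice function, whereas you extract an injection $h\mapsto u_h$ of $\prod_{r\in\mathsf{R}}\mathsf{R}(r)$ into $\mathsf{R}$ directly from the realizers --- the same pigeonhole argument run in the other direction, which lets you skip the disjointness step.
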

\begin{proof} 
Let $\mathcal{R}$ satisfy $\left|\prod_{r\in \mathsf{R}}\mathsf{R}(r)\right|>|\mathsf{R}|$ and for every $a\in \mathsf{R}$ let us consider the  
function $\varphi_a:\mathsf{R}\rightarrow \mathcal{P}(\mathsf{R})$ defined as follows: 
$$\varphi_{a}(x)=\begin{cases}\{a\}\textrm{ if }x=a\\ \emptyset \textrm{ if }x\neq a\end{cases}$$
Suppose $\psi$ to be a supremum for the family $(\varphi_{a})_{a\in\mathsf{R}}$.

If we define the function $\mathsf{sgl}$ as in the proof of Proposition \ref{ref} then $\mathbf{i}\Vdash (\varphi_a \vdash_{\mathsf{R}} \mathsf{sgl})$ for every $a\in \mathsf{R}$, where $\mathbf{i}$ is an element of $\mathcal{R}$ such that $\mathbf{i}\cdot x=x$ for every $x\in \mathsf{R}$, which exists since $\mathcal{R}$ is reflexive. Thus $\psi\vdash_{\mathsf{R}} \mathsf{sgl}$. In particular, if $\psi(a)\cap \psi(b)\neq \emptyset$, then $a=b$. Moreover, since $\varphi_a \vdash \psi$ for every $a\in \mathsf{R}$, we have that $\psi(a)\neq \emptyset$ for every $a\in \mathsf{R}$.

We are in the conditions for applying Lemma \ref{l1} with respect to the families $(\mathsf{R}(a))_{a\in \mathsf{R}}$ and $(\psi(a))_{a\in \mathsf{R}}$. Thus there exists a function $\varphi: \bigcup_{a\in \mathsf{R}} \psi(a)\rightarrow \mathsf{R}$ such that $\varphi(x)\in \mathsf{R}(a)$ for every $a\in \mathsf{R}$ and $x\in \psi(a)$,  $\varphi(x) = \varphi(y)$ for every
$a\in \mathsf{R}$ and every $x,y\in\psi(a)$, and for which there is no $s\in\mathsf{R}$ such that $\varphi\subseteq [s]$. 

Let $\widetilde{\varphi} : \mathsf{R}\rightarrow \mathcal{P}(\mathsf{R})$ be the function defined by
$\widetilde{\varphi}(a):=\{\varphi(x)|\,x\in\psi(a)\}$ for every $a\in \mathsf{R}$.

Since for every $a\in A$ we have that $\varphi(x)\in \mathsf{R}(a)$ for every $x\in \psi(a)$ and that $\widetilde{\varphi}(a)$ is a singleton, we conclude that $\varphi_{a}\vdash_{\mathsf{R}}\widetilde{\varphi}$ for every $a \in  \mathsf{R}$. From this it follows that $\psi \vdash_{\mathsf{R}}\widetilde{\varphi}$, that is there exists $s\in \mathsf{R}$ such that for every $a\in \mathsf{R}$ and for every $x\in \psi(a)$, $s\cdot x=\varphi(x)$, that is $\varphi\subseteq [s]$. This is a contradiction. Thus, the family $\{\varphi_{a}\}_{a\in \mathsf{R}}$ has no supremum in $\pi_{\mathcal{R}}(\mathsf{R})$.\end{proof}

%
%
%
%
\begin{corollary} If $\mathcal{R}$ has at least two elements and it is complete, then it does not satisfy $\mathbf{TM}$. In particular, every non-trivial PCA is not complete. 
\end{corollary}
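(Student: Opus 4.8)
The plan is to deduce the corollary directly from Theorem \ref{comp1} via a cardinal computation, the crucial observation being that $\mathbf{TM}$ forces each of the sets $\mathsf{R}(a)$ to be the whole of $\mathsf{R}$. Recall that by definition $\mathsf{R}(a)=\{s\cdot a\mid s\in\mathsf{R}\}$, while $\mathbf{TM}$ asserts precisely that for every $a,b\in\mathsf{R}$ there is some $s$ with $s\cdot a=b$; reading this with $a$ fixed and $b$ ranging over $\mathsf{R}$ says exactly that $\mathsf{R}(a)=\mathsf{R}$ for every $a\in\mathsf{R}$. I would state this translation first, since it is the one spot where one must keep the index bookkeeping straight (matching the existential in $\mathbf{TM}$ to the set $\mathsf{R}(a)$).

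Next I would note that a complete PAS is in particular preorderal, since a complete preorder is a preorder, so we are genuinely within the setting of Theorem \ref{comp1}; its second assertion then guarantees that completeness of $\mathcal{R}$ forces $\left|\prod_{r\in\mathsf{R}}\mathsf{R}(r)\right|\le|\mathsf{R}|$. Assuming towards a contradiction that a complete $\mathcal{R}$ with $|\mathsf{R}|\ge 2$ also satisfies $\mathbf{TM}$, I would substitute $\mathsf{R}(r)=\mathsf{R}$ for every $r$ to obtain $\prod_{r\in\mathsf{R}}\mathsf{R}(r)=\mathsf{R}^{\mathsf{R}}$, of cardinality $|\mathsf{R}|^{|\mathsf{R}|}$. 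The decisive step is the cardinal inequality $|\mathsf{R}|^{|\mathsf{R}|}\ge 2^{|\mathsf{R}|}>|\mathsf{R}|$, which holds exactly because $|\mathsf{R}|\ge 2$ (by Cantor's theorem). This contradicts the bound coming from Theorem \ref{comp1}, so $\mathbf{TM}$ cannot hold, proving the first assertion.

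For the ``in particular'' clause I would invoke the facts about PCAs already established: a non-trivial PCA is infinite, hence has at least two elements, by Proposition \ref{PCAtriv} (where $\mathbf{Finite}\Leftrightarrow\mathbf{Trivial}$ for PCAs), and it satisfies $\mathbf{TM}$ because $\mathbf{K}\Rightarrow\mathbf{Const}\Rightarrow\mathbf{TM}$ by Propositions \ref{Konst} and \ref{basic}(2). Applying the first part of the corollary to such a PCA—which satisfies $\mathbf{TM}$ and has more than one element—shows that it cannot be complete.

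I do not anticipate a serious obstacle here: the substantive work is entirely contained in Theorem \ref{comp1}, and what remains is a short unwinding of definitions together with elementary cardinal arithmetic. The only genuinely delicate point is to make explicit \emph{why} the hypothesis $|\mathsf{R}|\ge 2$ is indispensable, namely that it is precisely what powers the strict inequality $|\mathsf{R}|^{|\mathsf{R}|}>|\mathsf{R}|$; for a one-element carrier $1^{1}=1$ and the argument collapses, in agreement with the fact that the trivial PAS is a complete PCA.
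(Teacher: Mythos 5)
Your proposal is correct and follows essentially the same route as the paper: translate $\mathbf{TM}$ into $\mathsf{R}(a)=\mathsf{R}$ for all $a$, observe $\left|\mathsf{R}^{\mathsf{R}}\right|>|\mathsf{R}|$ when $|\mathsf{R}|\geq 2$, and conclude by Theorem \ref{comp1}; the PCA clause via $\mathbf{K}\Rightarrow\mathbf{Const}\Rightarrow\mathbf{TM}$ and the non-triviality of the carrier is also as intended. Your explicit Cantor-style justification of the strict inequality and the remark on why $|\mathsf{R}|\geq 2$ is indispensable are just slightly more detailed versions of what the paper leaves implicit.
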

\begin{proof}
If $\mathcal{R}$ is a preorderal totally matching PAS, then $\mathsf{R}(r)=\mathsf{R}$ for every $r\in \mathsf{R}$, thus if $\mathsf{R}$ has at least two elements, then $\left|\prod_{r\in \mathsf{R} }\mathsf{R}(r)\right|=\left|\mathsf{R}^{\mathsf{R}}\right|>|\mathsf{R}|$. Hence, as a consequence of Theorem \ref{comp1}, $\mathcal{R}$ is not complete.
\end{proof}

We now introduce the following notation.
\begin{definition}
Let $\mathcal{R}$ be a PAS. We define $\mathsf{R}_2$ as the set of all $r\in \mathsf{R}$ such that $\mathsf{R}(r)$ has at least two elements.
\end{definition}
The following proposition characterize those preorderal PASs $\mathcal{R}$ for which $\mathsf{R}_2=\emptyset$.

\begin{proposition}
Let $\mathcal{R}$ be a preorderal PAS. The following are equivalent
\begin{enumerate}
\item $\mathsf{R}_2=\emptyset$;
   \item for every $r,a\in \mathsf{R}$, if $r\cdot a\downarrow$, then $r\cdot a=a$;
    \item for every $a,b\in \mathsf{R}$, if $a\neq b$, then $\emptyset$ is an infimum for $\{a\}$ and $\{b\}$ in $(\mathcal{P}(\mathsf{R}),\vdash)$;
    \item for every $A,B\subseteq \mathsf{R}$, $A\cap B$ is an infimum for $A$ and $B$ in $(\mathcal{P}(\mathsf{R}),\vdash)$;
    \item $\vdash$ coincides with $\subseteq$;
    \item $\pi_{\mathcal{R}}$ is the Boolean-valued tripos on $\mathbf{Set}$ relative to the complete Boolean algebra $(\mathcal{P}(\mathsf{R}),\subseteq)$.
\end{enumerate}
\end{proposition}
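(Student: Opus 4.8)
The plan is to establish $1\Leftrightarrow 2$, then the cycle $2\Rightarrow 5\Rightarrow 4\Rightarrow 3\Rightarrow 2$, and finally $2\Rightarrow 6\Rightarrow 5$. The equivalence $1\Leftrightarrow 2$ is purely a matter of unwinding definitions: since $\mathcal{R}$ is preorderal, hence reflexive, the identity realizer $\mathbf{i}$ gives $\mathbf{i}\cdot r=r$, so $r\in\mathsf{R}(r)$ and thus $\left|\mathsf{R}(r)\right|\geq 1$ for every $r$. Consequently $\mathsf{R}_2=\emptyset$ says exactly that $\mathsf{R}(r)=\{r\}$ for every $r$, which, after renaming variables, is precisely the assertion that $s\cdot r\downarrow$ implies $s\cdot r=r$, i.e.\ condition 2.

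For the easy arc of the cycle: $2\Rightarrow 5$ combines Remark \ref{ext} (reflexivity already yields $A\subseteq B\Rightarrow A\vdash B$) with the converse supplied by condition 2, since any realizer $r$ witnessing $A\vdash B$ satisfies $r\cdot a=a\in B$ for each $a\in A$, forcing $A\subseteq B$. Then $5\Rightarrow 4$ is immediate, because if $\vdash$ is literally $\subseteq$ then $(\mathcal{P}(\mathsf{R}),\vdash)$ is the powerset poset, in which the meet of $A$ and $B$ is $A\cap B$; and $4\Rightarrow 3$ is just the specialization to $A=\{a\}$ and $B=\{b\}$ with $a\neq b$, where $A\cap B=\emptyset$.

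The step $3\Rightarrow 2$ carries the real content, and I would argue it by contraposition. A preliminary observation streamlines everything: for any $C\subseteq\mathsf{R}$ one has $C\vdash\emptyset$ if and only if $C=\emptyset$, since $C\Rightarrow\emptyset$ is empty as soon as $C$ is non-empty. Now suppose condition 2 fails, say $r\cdot a=b$ with $b\neq a$. Then $\{a\}\vdash\{b\}$ via $r$, while $\{a\}\vdash\{a\}$ by reflexivity, so $\{a\}$ is a non-empty lower bound of $\{a\}$ and $\{b\}$ in $\vdash$. If condition 3 held, $\emptyset$ would be their infimum, and every lower bound would entail it; in particular $\{a\}\vdash\emptyset$, contradicting the preliminary observation. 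Hence condition 3 forces condition 2.

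Finally, for condition 6 I would work directly from condition 2, and here the main obstacle is conceptual rather than computational: $\vdash_{I}$ a priori demands a single realizer valid across all indices $i$, and the text already warns that $\vdash_{I}$ is strictly finer than the pointwise relation in general. Under condition 2 this collapse occurs because the identity realizer $\mathbf{i}$ serves as one uniform witness: if $\varphi(i)\subseteq\psi(i)$ for all $i$, then $\mathbf{i}\cdot a=a\in\psi(i)$ for every $a\in\varphi(i)$, so $\mathbf{i}\Vdash(\varphi\vdash_{I}\psi)$; conversely any witness $r$ forces $a=r\cdot a\in\psi(i)$, giving pointwise inclusion. Thus each $\vdash_{I}$ coincides with pointwise $\subseteq$ on $\mathcal{P}(\mathsf{R})^{I}$, and since the reindexing maps are precomposition in both cases, $\pi_{\mathcal{R}}$ is exactly the Boolean-valued tripos attached to the complete Boolean algebra $(\mathcal{P}(\mathsf{R}),\subseteq)$. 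The reverse implication $6\Rightarrow 5$ is then immediate: restricting to a singleton index recovers $(\mathcal{P}(\mathsf{R}),\vdash)=(\mathcal{P}(\mathsf{R}),\subseteq)$.
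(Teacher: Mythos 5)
Your proof is correct and follows essentially the same route as the paper: the same equivalence $1\Leftrightarrow 2$, the same cycle $2\Rightarrow 5\Rightarrow 4\Rightarrow 3\Rightarrow 2$ with the identical contradiction argument for $3\Rightarrow 2$ (a nonempty set cannot entail $\emptyset$), and the same closing arc $2\Rightarrow 6\Rightarrow 5$. You merely spell out in more detail the steps the paper calls ``immediate'' or ``obvious,'' such as the uniform identity realizer collapsing $\vdash_I$ to pointwise inclusion.
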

\begin{proof}
1.\ is equivalent to 2.\ , since for a preorderal PAS $\mathsf{R}_2=\emptyset$ if and only if $\mathsf{R}(a)=\{a\}$ for every $a\in \mathsf{R}$. From 2.\ item 5.\ follows immediately since $\mathcal{R}$ is preorderal. Moreover 4.\ follows from 5.\ since intersections are exactly binary infima with respect to the inclusion relation in $\mathcal{P}(\mathsf{R})$. Item 3.\ is a special case of 4. Finally, we prove that  3.\ implies 2.\ by contradiction. So let us assume that there exist $r,a,b\in \mathsf{R}$ such that $a\neq b$ and $r\cdot a=b$. From this it follows that $\{a\}\vdash \{b\}$. Since $\mathcal{R}$ is preorderal we also know that $\{a\}\vdash \{a\}$. By $(3)$ we have that $\emptyset$ is an infimum for $\{a\}$ and $\{b\}$, from which it follows that $\{a\}\vdash \emptyset$ which leads to a contradiction. Finally, obviously from 2.\ one can derive 6.\ and from 6.\ it follows 5.\ as it is clear by working on a fiber over a singleton. 
\end{proof}
From the previous proposition it follows that if $\mathsf{R}_2=\emptyset$, then $\mathcal{R}$ is complete. Indeed, the fibers of the Boolean-valued tripos on $\mathbf{Set}$ relative to a complete Boolean algebra are complete Boolean algebras and its reindexing maps preserve infima and suprema.

We present now another corollary of Theorem \ref{comp1} providing a strict upper bound for the cardinality of $\mathsf{R}_2$ in the case in which $\mathcal{R}$ is complete.

\begin{corollary}
 If $\mathcal{R}$ is complete, then $|\mathsf{R}_2|<|\mathsf{R}|$. In particular, if $\mathcal{R}$ is complete and $\mathsf{R}$ is countably infinite, then there is at most a finite number of elements of $\mathsf{R}$ such that $|\mathsf{R}(r)|\geq 2$.
\end{corollary}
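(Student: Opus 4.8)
The plan is to derive both assertions directly from Theorem \ref{comp1}, which already tells us that completeness of $\mathcal{R}$ forces $\left|\prod_{r\in \mathsf{R}}\mathsf{R}(r)\right|\leq |\mathsf{R}|$. The whole argument is then a matter of cardinal arithmetic, the crucial observation being that one may restrict the product to the indices lying in $\mathsf{R}_2$.

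First I would note that every factor $\mathsf{R}(r)$ is non-empty (indeed $r\in\mathsf{R}(r)$ by reflexivity), and that $\mathsf{R}(r)$ is a singleton precisely when $r\notin\mathsf{R}_2$. Since singleton factors contribute nothing to the cardinality of a product, a choice of an element of $\prod_{r\in\mathsf{R}}\mathsf{R}(r)$ is completely determined by its restriction to the indices in $\mathsf{R}_2$, whence
$$\left|\prod_{r\in\mathsf{R}}\mathsf{R}(r)\right|=\left|\prod_{r\in\mathsf{R}_2}\mathsf{R}(r)\right|.$$
Because each factor $\mathsf{R}(r)$ with $r\in\mathsf{R}_2$ has at least two elements, fixing a two-element subset of each such factor yields an injection of $2^{\mathsf{R}_2}$ into $\prod_{r\in\mathsf{R}_2}\mathsf{R}(r)$, so that $2^{|\mathsf{R}_2|}\leq\left|\prod_{r\in\mathsf{R}_2}\mathsf{R}(r)\right|$. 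Combining this with Theorem \ref{comp1} gives $2^{|\mathsf{R}_2|}\leq|\mathsf{R}|$.

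At this point Cantor's inequality $\kappa<2^{\kappa}$, valid for every cardinal $\kappa$ (finite or infinite), finishes the first claim: $|\mathsf{R}_2|<2^{|\mathsf{R}_2|}\leq|\mathsf{R}|$. I would remark that this uniform use of Cantor's theorem is exactly what lets the single chain of inequalities cover the finite case (where $n<2^{n}$), the degenerate case $\mathsf{R}_2=\emptyset$ (where the bound reads $0<|\mathsf{R}|$ and holds since $\mathsf{R}$ is non-empty), and the infinite case alike, so no case analysis on $\mathsf{R}$ is needed. For the ``in particular'' statement, if $\mathsf{R}$ is countably infinite then $2^{|\mathsf{R}_2|}\leq\aleph_0$; were $\mathsf{R}_2$ infinite we would have $2^{|\mathsf{R}_2|}\geq 2^{\aleph_0}>\aleph_0$, a contradiction, so $\mathsf{R}_2$ must be finite, i.e.\ only finitely many $r\in\mathsf{R}$ satisfy $|\mathsf{R}(r)|\geq 2$.

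The argument has no genuinely hard step; what requires the most care is the reduction of the product to the factors indexed by $\mathsf{R}_2$, since one must justify that discarding the singleton factors leaves the cardinality of the product unchanged and that the product over $\mathsf{R}_2$ dominates $2^{|\mathsf{R}_2|}$. Once that bookkeeping is in place, invoking Theorem \ref{comp1} and Cantor's theorem is immediate.
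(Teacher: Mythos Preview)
Your proof is correct and follows essentially the same route as the paper: bound $\prod_{r\in\mathsf{R}}\mathsf{R}(r)$ from below by $2^{|\mathsf{R}_2|}$ via restriction to the non-singleton factors, then combine Theorem~\ref{comp1} with Cantor's inequality. The only cosmetic difference is that the paper argues by contrapositive (assuming $|\mathsf{R}_2|=|\mathsf{R}|$ and deriving the strict inequality of Theorem~\ref{comp1}), whereas you chain the inequalities directly; the content is the same.
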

\begin{proof}
Let $\mathcal{R}$ be a preorderal PAS. 
$$\left|\prod_{r\in \mathsf{R}}\mathsf{R}(r)\right|\geq \left|\prod_{r\in \mathsf{R}_2}\mathsf{R}(r)\right|\geq |\{0,1\}^{{\mathsf{R}}_2}|.$$
Thus, if $|\mathsf{R}_2|=|\mathsf{R}|$, we have that $\left|\prod_{r\in \mathsf{R}}\mathsf{R}(r)\right|=  |\{0,1\}^\mathsf{R}|>|\mathsf{R}|$ and hence that $\mathcal{R}$ is not complete by Theorem \ref{comp1}.
\end{proof}
Here is an example of use of the previous corollary.
\begin{example}
Let us consider the PAS $(\mathbb{N},\cdot)$ where $\cdot$ is the usual product of natural numbers. 
This PAS is preorderal since it is a monoid. It is not totally matching since e.g.\ two distinct prime numbers cannot be obtained each other via a multiplicative function, but $\mathsf{R}(n)=\{kn|\,k\in \mathbb{N}\}$ has infinitely many elements for every positive natural number $n$. Thus, $(\mathbb{N},\cdot)$ is not complete. 
\end{example}
We present here other three consequences of Theorem \ref{comp1}. 
\begin{remark} If $|\mathsf{R}|=2$ and $\mathcal{R}$ is complete, then $\mathsf{R}_2=\emptyset$. Indeed, let us assume that $\mathcal{R}$ is a preorderal PAS, $\mathsf{R}=\{0,1\}$ and $\mathsf{R}_2\neq \emptyset$. If $|\mathsf{R}_2|=2$, then $\mathcal{R}$ is not complete since $\prod_{r\in \mathsf{R}}\mathsf{R}(r)=4>2$. If  $|\mathsf{R}_2|=1$, then we can consider without loss of generality only the following two cases:
$$\begin{tabular}{|c|c|c|}
\hline
$\cdot$		&$0$		&$1$\\
\hline
$0$		&$0$		&$1$\\
\hline
$1$			&$1$			&$1$\\
\hline
\end{tabular}
\qquad
\begin{tabular}{|c|c|c|}
\hline
$\cdot$		&$0$		&$1$\\
\hline
$0$		&$0$		&$1$\\
\hline
$1$			&$1$			&-\\
\hline
\end{tabular}$$
In both cases, $\pi_{\mathcal{R}}(\{0,1\})$ is not complete, since $(\{0\},\{1\})$ and $(\{1\},\{0\})$ have no infimum.
\end{remark}
\begin{corollary}
If $\mathcal{R}$ is a complete PAS with at least two elements such that $|\mathsf{R}_2|>0$, then $|\mathsf{R}_2|>1$.
\end{corollary}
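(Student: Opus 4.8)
The plan is to argue by contradiction: assuming $|\mathsf{R}_2|>0$ but $|\mathsf{R}_2|=1$, say $\mathsf{R}_2=\{r_0\}$, I would produce a failure of completeness. The first thing to note is that Theorem \ref{comp1} is of no direct use here: when $\mathsf{R}_2=\{r_0\}$ one has $\mathsf{R}(r)=\{r\}$ for every $r\neq r_0$, so $\left|\prod_{r\in\mathsf{R}}\mathsf{R}(r)\right|=|\mathsf{R}(r_0)|\leq|\mathsf{R}|$ and the cardinality hypothesis of Theorem \ref{comp1} can never be met. Instead I would adapt the explicit obstruction used in the Remark that settles the case $|\mathsf{R}|=2$, exhibiting in the fiber $\pi_{\mathcal{R}}(\{0,1\})$ a pair of objects with no infimum.

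Since $r_0\in\mathsf{R}_2$ we may fix $b\in\mathsf{R}(r_0)$ with $b\neq r_0$ and $s\in\mathsf{R}$ with $s\cdot r_0=b$; recall also that $\mathcal{R}$, being complete, factors through $\mathbf{cPord}\subseteq\mathbf{Pord}$ and is therefore reflexive, so there is $\mathbf{i}$ with $\mathbf{i}\cdot a=a$ for all $a$. The crucial structural fact I would isolate is that every $x\in\mathsf{R}(r_0)$ with $x\neq r_0$ is a \emph{sink}: since $x\notin\mathsf{R}_2$ while $x\in\mathsf{R}(x)$ by reflexivity, we get $\mathsf{R}(x)=\{x\}$, i.e.\ $t\cdot x=x$ whenever $t\cdot x\downarrow$. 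I would then set $\alpha:=(\{r_0\},\{b\})$ and $\beta:=(\{b\},\{r_0\})$ in $\pi_{\mathcal{R}}(\{0,1\})$ and verify directly that $\gamma:=(\{r_0\},\emptyset)$ and $\gamma':=(\emptyset,\{r_0\})$ are lower bounds of both $\alpha$ and $\beta$, using $\mathbf{i}$ and $s$ as realizers.

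The heart of the argument, and the step I expect to be the main obstacle, is to rule out any infimum $\delta=(\delta_0,\delta_1)$ of $\{\alpha,\beta\}$. As a greatest lower bound $\delta$ must satisfy $\gamma\vdash_{\{0,1\}}\delta$ and $\gamma'\vdash_{\{0,1\}}\delta$, which forces $\delta_0\cap\mathsf{R}(r_0)\neq\emptyset$ and $\delta_1\cap\mathsf{R}(r_0)\neq\emptyset$. Choosing $p$ in the first intersection and $q$ in the second, and using that $\delta\vdash_{\{0,1\}}\alpha$ and $\delta\vdash_{\{0,1\}}\beta$ are each witnessed by a \emph{single} realizer, the sink property (through the realizer of $\delta\vdash_{\{0,1\}}\alpha$ for $p$, and that of $\delta\vdash_{\{0,1\}}\beta$ for $q$) forces $p=q=r_0$, so that $r_0\in\delta_0\cap\delta_1$. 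But then the single realizer of $\delta\vdash_{\{0,1\}}\alpha$ would have to send $r_0$ to $r_0$ (from the first coordinate, where $\alpha(0)=\{r_0\}$) and simultaneously to $b$ (from the second, where $\alpha(1)=\{b\}$), which is absurd since $b\neq r_0$. Hence $\{\alpha,\beta\}$ has no infimum, $\pi_{\mathcal{R}}(\{0,1\})$ is not complete, and so $\mathcal{R}$ is not complete, contradicting the hypothesis; therefore $|\mathsf{R}_2|>1$.
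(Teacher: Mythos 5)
Your proof is correct, and it reaches the same obstruction as the paper by a more direct route. The paper first builds a two-element sub-PAS $\mathcal{R}'$ on $\{r_0,b\}$ (with $r_0$ the unique element of $\mathsf{R}_2$ and $b\in\mathsf{R}(r_0)\setminus\{r_0\}$), proves that entailment between $\{r_0,b\}$-valued predicates and infima of families of such predicates transfer between $\pi_{\mathcal{R}}$ and $\pi_{\mathcal{R}'}$, and then invokes the preceding Remark that $\pi_{\mathcal{R}'}(\{0,1\})$ lacks an infimum for the pair $(\{0\},\{1\})$, $(\{1\},\{0\})$. You instead inline the whole reduction: your pair $(\{r_0\},\{b\})$, $(\{b\},\{r_0\})$ is exactly that obstruction transported into the full fiber $\pi_{\mathcal{R}}(\{0,1\})$, and your \emph{sink} observation (that $t\cdot x=x$ whenever $t\cdot x\downarrow$ and $x\neq r_0$, because $x\in\mathsf{R}(x)$ by reflexivity and $x\notin\mathsf{R}_2$) is precisely the fact that makes the paper's two transfer claims about $\mathcal{R}'$ true. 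Your verification of the no-infimum step is sound: the lower bounds $(\{r_0\},\emptyset)$ and $(\emptyset,\{r_0\})$ force any candidate infimum $\delta$ to meet $\mathsf{R}(r_0)$ in both coordinates, the sink property pushes those witnesses to $r_0$ itself, and a single realizer of $\delta\vdash_{\{0,1\}}(\{r_0\},\{b\})$ would then have to send $r_0$ to both $r_0$ and $b$. What your route buys is self-containedness and the avoidance of the sub-PAS bookkeeping; what the paper's route buys is modularity, since the Remark on two-element PASs is proved once and reused. Your preliminary observation that Theorem \ref{comp1} cannot apply when $|\mathsf{R}_2|=1$ is also accurate and well placed.
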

\begin{proof}
Let $\mathcal{R}$ be a complete PAS with at least two elements in which there is exactly one element $\alpha\in \mathsf{R}$ with $|\mathsf{R}(\alpha)|\geq 2$. By hypothesis there exists $r\in \mathsf{R}\setminus \{\alpha\}$ such that $r\in \mathsf{R}(\alpha)$. 
There are two possibilities:
\begin{enumerate}
\item there exists $s\in \mathsf{R}$ such that $s\cdot r=s\cdot \alpha=r$;
\item there is no such an $s$.
\end{enumerate} 
In any of these two cases we consider a PAS $\mathcal{R}'=(\{\alpha,r\},\bullet)$
where $\bullet$ is defined by the following tables in the first and in the second case, respectively.
$$\begin{tabular}{|c|c|c|}
\hline
$\bullet$		&$\alpha$		&r\\
\hline
$\alpha$		&$\alpha$		&r\\
\hline
r			&r			&r\\
\hline
\end{tabular}
\qquad
\begin{tabular}{|c|c|c|}
\hline
$\bullet$		&$\alpha$		&r\\
\hline
$\alpha$		&$\alpha$		&r\\
\hline
r			&r			&-\\
\hline
\end{tabular}$$
For every $I$, $\pi_{\mathcal{R}'}(I)\subseteq \pi_{\mathcal{R}}(I)$ and
\begin{enumerate}
\item for every $\varphi,\psi\in \pi_{\mathcal{R}'}(I)$, $\psi\vdash_{I}^{\mathcal{R}'} \varphi$ if and only if $\psi\vdash_{I}^{\mathcal{R}} \varphi$.
\item for every $\varphi\in \pi_{\mathcal{R}'}(I)$ and $\psi\in \pi_{\mathcal{R}}(I)$, if $\psi\vdash_{I}^{\mathcal{R}} \varphi$, then $\psi\in  \pi_{\mathcal{R}'}(I)$ and $\psi\vdash_{I}^{\mathcal{R}'} \varphi$.
\end{enumerate}
From this it follows that all infima of families of elements of $\pi_{\mathcal{R}'}(I)$ in $\pi_{\mathcal{R}}(I)$, are infima for the same families in $\pi_{\mathcal{R}'}(I)$. 
In particular, if $\mathcal{R}$ is complete, then $\pi_{\mathcal{R}'}(I)$ must be complete for every set $I$. 
However $\pi_{\mathcal{R}'}(\{\alpha,r\})$ is not complete as a consequence of the previous remark.
\end{proof}

\begin{remark}
If $\mathcal{R}$ is complete and $|\mathsf{R}|=3$ then $\mathsf{R}_2=\emptyset$.
Indeed, if $|\mathsf{R}_2|>0$ and $\mathcal{R}$ is complete, then $|\mathsf{R}_2|\geq 2$ by the previous corollary. But in this case $\left|\prod_{r\in \mathsf{R}}\mathsf{R}(r)\right|\geq 4>3$. Thus by Theorem \ref{comp1} we have a contradiction.
\end{remark}

After this discussion it is clear there is a natural open problem.
\begin{open}\label{open} Is there any complete partial applicative structure $\mathcal{R}$ for which $\pi_{\mathcal{R}}$ is not the Boolean-valued tripos relative to $(\mathcal{P}(\mathsf{R}),\subseteq)$?
\end{open}
In this section we have  restricted the search area for possible examples, however a definitive answer is not currently available.

\section{Conclusions}
In this paper we investigate two particular kinds of partial applicative structures: those giving rise to indexed posets and those giving rise to complete indexed preorders. 
In the first case, we were able to describe exactly the antisymmetry phenomenon in terms of basic computational properties of the partial applicative structure, at least in the case in which transitivity holds. This lead to a characterization of posetal PASs which is user friendly and allow to check quite easily when a preorderal PAS is antisymmetric. The case of complete PASs is instead still under a veil of fog. In the last section we provided some necessary conditions essentially based on cardinality arguments which significantly restricted the search area for possible examples of complete PASs. Clearly a future goal for such a research direction should be to give a definitive answer to Open Problem \ref{open}.

\subsection*{Acknowledgments} The author would like to thank Francesco Ciraulo and Maria Emilia Maietti for fruitful discussions on the topic of the present paper.

\bibliographystyle{alpha}
\bibliography{biblioPAS}

\begin{thebibliography}{HJP80}

\bibitem[HJP80]{HJP}
J.~M.~E. Hyland, P.~T. Johnstone, and A.~M. Pitts.
\newblock Tripos theory.
\newblock {\em Math. Proc. Cambridge Philos. Soc.}, 88(2):205--231, 1980.

\bibitem[Hyl82]{HYL}
J.~M.~E. Hyland.
\newblock The effective topos.
\newblock In {\em The {L}.{E}.{J}. {B}rouwer {C}entenary {S}ymposium
  ({N}oordwijkerhout, 1981)}, volume 110 of {\em Stud. Logic Foundations
  Math.}, pages 165--216. North-Holland, Amsterdam-New York, 1982.

\bibitem[LN15]{LN}
J.~Longley and D.~Normann.
\newblock {\em Higher-order computability}.
\newblock Theory and Applications of Computability. Springer, Heidelberg, 2015.

\bibitem[MS08]{graph}
K.~Mehlhorn and P.~Sanders.
\newblock {\em Algorithms and data structures. The basic toolbox}.
\newblock Springer-Verlag, Berlin, 2008.

\bibitem[vO08]{VOO08}
J.~van Oosten.
\newblock {\em Realizability: an introduction to its categorical side}, volume
  152 of {\em Studies in Logic and the Foundations of Mathematics}.
\newblock Elsevier B. V., Amsterdam, 2008.

\end{thebibliography}
\end{document}